\documentclass[reqno,11pt]{amsart}

\usepackage[T1,T2A]{fontenc}
\usepackage[utf8]{inputenc}

\usepackage{latexsym}
\usepackage{amssymb}
\usepackage{amsrefs}
\usepackage{bbm}
\usepackage[mathscr]{euscript}
\usepackage{nicefrac}

\usepackage{xcolor}
\usepackage{graphicx}
\usepackage{adjustbox}
\usepackage{caption}
\usepackage{tabularx}
\usepackage{tikz}
\usepackage{tikzscale}

\usetikzlibrary{
  calc,
  angles,
  quotes,
  positioning,
  automata,
  arrows,
  decorations.pathreplacing,
  fadings,
  3d
}

\usepackage[all]{xy}

\usepackage[normalem]{ulem}
\usepackage{blindtext}
\usepackage{comment}
\usepackage{stackengine}
\usepackage{enumitem}
\usepackage{subfiles}
\usepackage{xparse}

\stackMath

\usepackage{hyperref}

\NewDocumentCommand{\vdotss}{ O{0.35em} O{0.8pt} m }{%
  \begin{tikzpicture}[baseline={(0,-0.5ex)}]
    \pgfmathtruncatemacro{\lastdot}{#3-1}%
    \foreach \i in {0,...,\lastdot}{%
      \fill (0,{-\i*#1}) circle[radius=#2];
    }%
    \path (0,0.3ex) -- (0,{-\lastdot*#1-0.3ex});
  \end{tikzpicture}%
}

\newcommand{\eq}[2]{%
  \begin{equation}
    \label{eq:#1}
    #2
  \end{equation}%
}

\newcommand{\equ}[1]{\eqref{eq:#1}}

\newcommand{\hs}{homogeneous space}
\newcommand{\ggm}{G/\Gamma}
\newcommand{\hd}{Hausdorff dimension}

\newcommand{\da}{Diophantine approximation}
\newcommand{\di}{Diophantine}

\newcommand{\ignore}[1]{}

\DeclareMathOperator{\Lie}{Lie}
\DeclareMathOperator{\dist}{dist}
\DeclareMathOperator{\diam}{diam}

\DeclareMathOperator{\SL}{SL}
\DeclareMathOperator{\PSL}{PSL}

\DeclareMathOperator{\Ad}{Ad}
\DeclareMathOperator{\Ker}{Ker}
\DeclareMathOperator{\ad}{ad}
\DeclareMathOperator{\tr}{tr}
\DeclareMathOperator{\diag}{diag}
\DeclareMathOperator{\supp}{supp}

\newcommand{\vq}{\mathbf{q}}

\newcommand{\p}{\mathbf{p}}

\newcommand{\bv}{\boldsymbol{v}}
\newcommand{\bx}{\mathbf{x}}

\newcommand{\fa}{\mathfrak{a}}
\newcommand{\fe}{\mathfrak{e}}

\newcommand{\fg}{\mathfrak{g}}
\newcommand{\fh}{\mathfrak{h}}
\newcommand{\fu}{\mathfrak{u}}
\newcommand{\fv}{\mathfrak{v}}
\newcommand{\fs}{\mathfrak{s}}
\newcommand{\fc}{\mathfrak{c}}

\newcommand{\R}{\mathbb{R}}
\newcommand{\Z}{\mathbb{Z}}
\newcommand{\N}{\mathbb{N}}

\newcommand{\nz}{\smallsetminus\{0\}}
\newcommand{\mr}{M_{m,n}}
\newcommand{\amr}{\ensuremath{\Theta\in M_{m,n}}}

\newcommand{\ba}{\ensuremath{\mathbf{BA}_{m,n}}}
\newcommand{\baab}{\ensuremath{\mathbf{BA}_{\alf,\betf}}}

\newcommand{\wab}{\mathbf{W}_{\alf,\betf}}
\newcommand{\fab}{(f,\alf,\betf)}

\newcommand{\alf}{\vec{\alpha}}
\newcommand{\betf}{\vec{\beta}}

\theoremstyle{plain}
\newtheorem{theorem}{Theorem}[section]
\newtheorem{lemma}[theorem]{Lemma}
\newtheorem{corollary}[theorem]{Corollary}

\newtheorem{proposition}[theorem]{Proposition}

\theoremstyle{definition}
\newtheorem{definition}[theorem]{Definition}
\newtheorem{example}[theorem]{Example}

\theoremstyle{remark}
\newtheorem{remark}[theorem]{Remark}
\newtheorem{observation}[theorem]{Observation}

\numberwithin{equation}{section}

\renewcommand{\le}{\leqslant}
\renewcommand{\ge}{\geqslant}

\DeclareFixedFont{\got}{U}{euf}{m}{n}{14.4pt}

\makeatletter
\newcommand{\namedlabel}[2]{%
  \begingroup
    #2%
    \def\@currentlabel{#2}%
    \phantomsection
    \label{#1}%
  \endgroup
}
\makeatother

\title[Bounded trajectories of quasi-rays]{%
  Bounded trajectories of quasi-rays on \\ homogeneous spaces and
  Diophantine \\ approximation with weight functions%
}

\author[D. Kleinbock]{Dmitry Kleinbock}
\address{%
  Department of Mathematics,
  Brandeis University,
  Waltham, MA 02453, USA%
}
\email{kleinboc@brandeis.edu}

\author[V. Neckrasov]{Vasiliy Neckrasov}
\address{%
  Department of Mathematics,
  Brandeis University,
  Waltham, MA 02453, USA%
}
\email{vneckrasov@brandeis.edu}

\date{}
\subjclass{37A17; 37A25, 11J13, 17B45}
\keywords{Homogeneous dynamics, equidistribution, expanding cone, Diophantine approximation with weights}

\thanks{The first-named author was supported by NSF grant DMS-2155111.}

\begin{document}

\begin{abstract} 
{Let $G$ be a connected semisimple real Lie group, $\Gamma$ an irreducible lattice in $G$ and $X = G/\Gamma$. Let $F = \{g_t: t\ge 0\}$ be a non-quasiunipotent one-parameter subsemigroup  of $G$.
{Then it is known that} the set of points in $X$ with bounded $F$-trajectories 
{has} full Hausdorff dimension. In addition, if  $U$ is the expanding horospherical subgroup relative to $g_1$, then for any $x \in X$ the set of points $u \in U$ such that the $F$-trajectory of $ux$ is bounded has full Hausdorff dimension. In this paper we take $U$ to be a horospherical subgroup of $G$ and  apply Shi's 
equidistribution theorem for elements of the expanding cone with respect to $U$   to describe a class of subsets $F$ in $G$, not presupposing the group structure, for which the above full \hd\  statements also hold. As an application, we prove that the set of badly approximable matrices in the set-up of Diophantine approximations with quasimultiplicative weight functions 
has full Hausdorff dimension.
}
\end{abstract}

\maketitle


\section{Introduction}

\subsection{Bounded trajectories on \hs s} \label{intro1} The study of bounded orbits on finite volume  \hs s of Lie groups dates back to the 1980s, when Dani used Schmidt's work on the winning property of the set  of badly approximable matrices to construct bounded trajectories in the space of unimodular lattices. Namely, 
let \eq{sld}{G = \SL_d(\R),\ \Gamma = \SL_d(\Z),\ X = X_d := \ggm} and 
\eq{defgt}{
F = \{g_t: t\ge 0\}, \text{ where } g_t: = \diag \left( e^{t/m}, \ldots, e^{t/m}, e^{-t/n}, \ldots, e^{-t/n} \right) \in G,\ m+n = d.
}
From   Schmidt's  \cite{Sc2} proof of the winning property of the set of badly approximable matrices  Dani \cite{dani}  deduced that the set 
$${\mathcal B}(F) := \{x\in  X: 
Fx \text{ is bounded}\}
$$
 is \textit{thick}; that is,  the intersection of this set with any non-empty open subset of $X$ has full \hd. In a follow-up paper  \cite{Da2} Dani was able to adapt Schmidt's argument to show  the thickness of ${\mathcal B}(F)$ for non-quasiunipotent flows on quotients $X$ of semisimple   Lie groups of
$\R$-rank $1$. (We recall that a one-parameter subsemigroup $F = \{g_t: t\ge 0\}$ of a Lie group $G$ is called \underbar{non-quasiunipotent} if 
\eq{genericF}{F = \{g_t: t\ge 0\}, \ g_t = \exp(t\bv),} and $\bv\in \fg := \Lie(G)$  is such that  $\ad \bv: \fg \to \fg$  has at least one eigenvalue with non-zero real part.)
 Then in 1996 the first-named author and Margulis \cite{KM} came up with an alternative approach utilizing mixing properties of the $g_t$-action on $X$. To state their results, recall that the \underbar{expanding horospherical subgroup relative to} 
 $g_1$, is defined as 
 $$U_+(g_1) := \{u\in G : g_{-t}ug_t\to e\text{ as }t\to +\infty\};$$
equivalently, as the exponential image of the sum of all the generalized eigenspaces of $\ad \bv$ corresponding to eigenvalues with positive real part.
{(Here and hereafter $e$ will stand for the identity element of Lie groups.)}\smallskip
 
The following is a special case of the main result\footnote{The paper \cite{KM} is written in a bigger generality than the set-up of Theorem \ref{km}; in particular for any Lie group $G$ and any lattice $\Gamma$ it gives a condition on a one-parameter subsemigroup $F$ of $G$ equivalent to the thickness of ${\mathcal B}(F)$. However the general case is reduced there to  $\Gamma$ being  an irreducible
lattice in  a connected semisimple Lie
group  $G$ without compact factors. 
} 
of  \cite{KM}:

\begin{theorem} \label{km} Let $G$ be a connected semisimple real Lie
group  without compact factors,  $\Gamma$ an irreducible
lattice in $G$, $X = \ggm$, and 
$F = \{g_t: t\ge 0\}$ a non-quasiunipotent subsemigroup of
$G$.   
Then 
\begin{itemize}
\item[\rm (a)] 
the set ${\mathcal B}(F)$ is thick;
\item[\rm (b)] let $U = U_+(g_1)$, then for any $x\in X$ the set 
$\big\{u\in 
U:  
ux \in {\mathcal B}(F)\big\}$ 
 is thick in $U$.
\end{itemize}
\end{theorem}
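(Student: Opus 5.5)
We follow the Kleinbock--Margulis strategy: a Cantor-set construction inside $U$ driven by mixing, followed by a Hausdorff dimension bound via the McMullen/Urbański lemma for tree-like families. First we reduce (a) to (b). Write $H = \{g \in G : g_{-t} g g_t \text{ stays bounded as } t\to+\infty\}$, the non-expanding subgroup; then $G$ is locally the product $H\cdot U$ with $\dim H + \dim U = \dim G$. If $h\in H$ is near $e$ and $ux$ has bounded $F$-orbit, then $g_t h u x = (g_t h g_{-t}) g_t u x$ is also bounded, since $g_t h g_{-t}$ stays in a compact set. By Marstrand's slicing/product inequality, thickness of the $U$-fibres in every local chart $hUx$ therefore gives full dimension of $\mathcal B(F)$ in every open subset of $X$. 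So it suffices to prove (b).

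For (b), we may replace $F$ by its discrete sample $\{g_{k t_0}: k\in\N\}$ for a fixed large $t_0$; boundedness is unaffected because $\{g_s: 0\le s\le t_0\}$ is compact. Put $g=g_{t_0}$. Since $\Gamma$ is irreducible and $G$ has no compact factors, Moore's ergodicity theorem makes the action of $g$ on $X$ mixing. We will use the standard consequence, obtained by thickening a box $V\subset U$ by a small neighbourhood in $H$ and using that $g$ contracts the complementary direction: for any compact $L\subset X$ and $f\in C_c(X)$, the measures obtained by pushing Haar measure on a box $V\subset U$ through $u\mapsto g^k u x$ equidistribute to $\mu$ as $k\to\infty$, uniformly over base points $x\in L$. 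Next fix a tessellation of $U$ by boxes that $\Ad(g)$ maps onto unions of boxes of the same shape. When $\Ad(g)|_{\fu}$ is not conformal, we use boxes adapted to the Jordan decomposition of $\ad\bv$ restricted to $\fu$, or pass to a power of $g$ so that the expansion is essentially by a fixed diagonal matrix times a bounded correction.

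The Cantor construction goes as follows. Choose a large compact $Q\subset X$ with $\mu(X\smallsetminus Q)<\varepsilon$, and let $Q'$ be a slightly larger compact set. Start from a box $V_0\subset U$ with $V_0x\subset Q'$. At stage $k$, each surviving box $V$ has $g^{k}Vx$ of bounded size. We subdivide $g^{-k}$-preimages of standard boxes and keep those subboxes $V'$ with $g^{k+1}V'x\cap Q\ne\emptyset$. By the uniform equidistribution above, applied to the base points $g^k u x\in Q'$, at least a proportion $1-2\varepsilon$ of the $N=|\det \Ad(g)|_{\fu}|$ subboxes survive. The limit set $K\subset V_0$ then satisfies $g^k K x\subset Q'$ for all $k$, so $K x\subset\mathcal B(F)$. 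The lemma of McMullen/Urbański then gives
\[
\dim K \ \ge\ \dim U - \frac{\log(1/(1-2\varepsilon))}{\log(\text{minimal expansion})}-\delta ,
\]
where $\delta$ accounts for the distortion coming from non-conformal expansion. Letting $\varepsilon\to0$, and $t_0\to\infty$ to kill the distortion, gives dimension $\dim U$ in every open subset of $U$.

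The main obstacle is the uniformity in the mixing step. Equidistribution must hold with a rate or threshold $k_0$ independent of the base point $g^k u x$, and these base points are only known to lie in the compact $Q'$. This is handled by a compactness argument: the equidistribution statement is uniform over $x$ in compact sets because of the injectivity radius, and we fix $t_0$ so large that a single step already achieves the required proportion. A secondary technical point is the non-conformality of $\Ad(g)|_{\fu}$. It is handled either by the weighted-box version of the counting lemma or by iterating with rectangles whose eccentricity is controlled.
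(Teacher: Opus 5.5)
Your scheme is the Kleinbock--Margulis argument that the paper cites for this statement and adapts in \S\ref{mainproof}: equidistribution of expanded $U$-pieces, a strongly tree-like collection in $U$, Urbanski's lemma, then slicing. Using plain mixing plus thickening instead of Shi's theorem is legitimate for a single ray with $U=U_+(g_1)$. The paper needs Corollary~\ref{cor:effective} because elements of a cone in $\fs_\fu^+$ may also expand directions outside $U$, and there thickening breaks down.

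\textbf{The main gap.} The combinatorial core of your Cantor construction does not exist as stated. In general there is no tessellation of $U$ by boxes that the conjugation $\Omega_g\colon u\mapsto gug^{-1}$ maps onto exact unions of tiles. The expansion rates of $\Ad(g)|_{\fu}$ are arbitrary reals with possibly irrational ratios, $U$ is typically non-abelian, and $N=|\det\Ad(g)|_{\fu}|$ need not even be an integer. Neither Jordan-adapted boxes nor passing to powers of $g$ repairs this.

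What is missing is the boundary count of \cite[Propositions 3.3 and 3.4]{KM}, which is Corollary~\ref{cor_boundary_effect} in the paper. Take a small tessellation domain $B$ for the right action of $U$ on itself, and keep only the tiles $B\gamma$ lying entirely in $\Omega_g(B)$. The discarded tiles cover at most the proportion $\mu_U(\{u:\dist(u,\partial B)\le \ell\cdot\diam(B)\})/\mu_U(B)$ of $\Omega_g(B)$, where $\ell$ is the Lipschitz constant of $\Omega_g^{-1}|_U$. Since $\ell\to0$ as $t_0\to\infty$ and $\mu_U(\partial B)=0$, this costs only an extra $\delta$ in the density, and the rest of your argument goes through.

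Your \emph{distortion} term is then unnecessary. Urbanski's bound uses only the densities $\Delta_i$ and the maximal diameters $d_k$, so no conformality is needed. If eccentricity did matter, enlarging $t_0$ would make it worse, not better.

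\textbf{Two steps that assume $\ad\bv$ is semisimple.} Here $\bv$ is the generator of $F$ in \equ{genericF}, and these steps tacitly assume it has no nilpotent Jordan component.

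First, in reducing (a) to (b), your $H$ has the wrong sign: as written it contains $U$. With the intended $H=\{h: g_thg_{-t}\text{ bounded for } t\ge0\}$, one has $\dim H+\dim U=\dim G$ only when $\ad\bv$ is semisimple. For example, take $\bv=\diag(1,1,-2)+E_{12}\in\mathfrak{sl}_3(\R)$ with $E_{12}$ the elementary matrix; then $\dim H=4$ and $\dim U=2$, while $\dim G=8$. The detour is unnecessary anyway. Since (b) holds at every base point, take any complement $\fv$ of $\fu$ and apply (b) at $hx$ for each $h$ in a small neighborhood of $e$ in $\exp(\fv)$. Then slice in the chart $(u,h)\mapsto uhx$, exactly as at the end of the proof of Theorem~\ref{main}.

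Second, $g$ does not contract the neutral directions. The thickening argument needs $\Ad(g_t)$ to be uniformly bounded on the sum of the generalized eigenspaces of $\ad\bv$ with non-positive real parts. That holds in the $\Ad$-semisimple case. When $\bv$ has a nilpotent part, $\Ad(g_t)$ grows polynomially on those eigenspaces. So the general non-quasiunipotent case needs an extra ingredient your sketch does not supply. One option is exponential mixing, which tolerates a thickening that shrinks polynomially in $t$.
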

We remark that the $F$-action on $X$ is ergodic and moreover mixing, which implies that the sets 
${\mathcal B}(F)$ and 
$\big\{u\in 
U: 
ux \in {\mathcal B}(F)\big\}$
are Haar null {if $X$ is not compact}. 
\smallskip

The present work grew out of the following questions: what if a ray $\{\exp(t\bv): t\ge 0\}$ is replaced by an arbitrary subset $F$ of $G$? Under what conditions on $F$ can the set ${\mathcal B}(F)$ 
be shown to be thick? For example,  $F$ could be a countable set or a smooth curve. Also, if $U$ is a horospherical subgroup (that is, expanding horospherical  relative to some $g\in G$), under what conditions for any $x\in X$ there exists a thick  set
of $u\in
U$ such that $  
Fux$ is bounded?
Note that whenever $F$ is unbounded, {$X$ is not compact and the $G$-action on $X$ is mixing}, it follows 
that $Fx$ is dense in $X$ for Haar-almost every $x\in X$. {(Otherwise there would exist a non-empty open $B \subset X$, a subset $C$ of $X$ of positive measure   and a sequence $f_k$ tending to infinity in $G$ such that $f_kx\notin B$ for any $x\in C$ and all $k$, contradicting mixing.)}  
 Hence {under the above assumptions the set} ${\mathcal B}(F)$ is Haar null.  Likewise, if one can find a sequence $f_k\in F$ such that the $f_k$-translates of $Ux$ get equidistributed in $X$, it follows that $Fhx$ is dense in $X$ for Haar-a.e.\ $h\in U$, hence the set $\big\{u\in 
U:  
ux \in {\mathcal B}(F)\big\}$ has Haar measure zero.

\smallskip
In order to state our main result we need the following general 
\begin{definition}\label{qr} \rm Let $V$ be a finite-dimensional normed vector space, 
and let $\fc\subsetneq V$ be a 
convex 
cone. 
\begin{itemize}
\item Say that a sequence {$\mathscr{V} = ({\pmb v}_k)_{k\in\N}$} of elements of $\fc$ is a \underbar{quasi-ray in} $\fc$ if  $\bv_{k} - \bv_{k-1} \in \overline{\fc}$ 
for any $k \in\N$ {(here we put $\bv_0 = 0$)}, and the set {of differences} $\{\bv_{k} - \bv_{k-1}\}$ is 
precompact in $V\nz$ (in other words, the norms of $\bv_{k} - \bv_{k-1}$ are bounded between two positive constants).
 
\item  If $G$ is a Lie group 
and  $V = \Lie G$, say that $F\subset G$ is a  \underbar{quasi-ray relative to} $\fc$ if there exists a quasi-ray {$\mathscr{V}$} such that $F$ lies in 
a bounded thickening of $\exp({\mathscr{V}})$. In other words, $$\sup\limits_{g \in F} \inf_{k \in \N} \dist(g, \exp \bv_k) < \infty.$$ 
(Here "$\dist$" refers to any right-invariant Riemannian metric on $G$.)
\end{itemize}
\end{definition}

Clearly any one-parameter semigroup of $G$ of the form 
\equ{genericF} is a quasi-ray relative to 
any open cone containing $\bv$. But 
the notion of a quasi-ray is much less restrictive.  {Let us illustrate it by the following simple} 

\begin{example}
    Let $G = \SL_3(\R)$, $ \fg = \mathfrak{sl}_3(\R) = \Lie G$, {${\pmb w}_1  = \diag(1,2,-3)$ and ${\pmb w}_2 = \diag(2, 1, -3)$}. Let {$( \varepsilon_k )_{k \in \N }$} be an arbitrary sequence {with} $\varepsilon_k \in \{1,2\}$ for any $k$, {and} let $\bv_k: = \sum\limits_{i=1}^k {\pmb w}_{\varepsilon_i}$. Then {$\mathscr{V} = (\bv_k)$} is a quasi-ray in the cone 
    $$
    \fc: = \{ {\diag(y_1, y_2, y): {y_1, y_2 > 0, \  y < 0, \ y_1 + y_2 + y = 0}} \}.
    $$
    {Clearly one can choose $(\varepsilon_k )$ so that {$\mathscr{V}$} is not contained in a bounded thickening of a ray in $\fc$.}
\end{example}

Now we are ready to state the main result of the paper, modulo some more definitions that will be given in the next section. {In what follows an element $b$ of $G$ will be called $\Ad$-diagonalizable if $\Ad(b)$ is diagonalizable over $\R$.}

\begin{theorem} \label{main} Let $G$ 
be a connected semisimple real Lie
group  without compact factors, {and let $U$ be a nontrivial subgroup of $G$ that is expanding horospherical relative to some $\Ad$-diagonalizable $b\in G$.  Then there exists a
connected $\Ad$-diagonalizable subgroup $A\ni b$} of $G$
and a proper convex open cone ${{\fs}_\fu^+}$ in $\fa = \Lie A$ (here we let $\fu = \Lie U$)
such that the following holds:
for any
 irreducible
lattice $\Gamma$ in $G$, any cone $\fs$ in $\fa$ 
{with} $\overline{\fs} \subset {{\fs}_\fu^+} \cup \{ 0 \}$, any quasi-ray $F$ relative to $\fs$ and any  $x\in X = \ggm$, the set \eq{conclU}{\{u\in 
U:  
ux \in {\mathcal B}(F)\}}
is thick. Consequently, the set ${\mathcal B}(F)$ is thick as well. 
\end{theorem}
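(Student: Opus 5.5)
We will run the Kleinbock--Margulis scheme from Theorem \ref{km}, using Shi's equidistribution theorem for the expanding cone in place of mixing of a one-parameter flow. First we choose the data. Let $A$ be a maximal connected $\Ad$-diagonalizable subgroup containing $b$; concretely, $A = \exp\fa$ with $\fa$ a maximal $\R$-split abelian subalgebra containing $\log b$. Define ${\fs}_\fu^+$ to be the open convex cone of those $\bv\in\fa$ such that $\ad\bv$ is strictly positive on every root space in $\fu$, nonnegative on the normalizer-type part $\fp = \Lie N_G(U)$, and such that $U$ is exactly the expanding horospherical subgroup of $\exp\bv$. This is the ``expanding cone'' relative to $U$ in Shi's sense; it contains $\log b$, so it is nonempty, and it is proper. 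Shi's theorem then says the following. Take $\bv_k\to\infty$ inside a subcone $\fs$ with $\overline\fs\subset{\fs}_\fu^+\cup\{0\}$, so that the minimal expansion rate on $\fu$ grows linearly in $\|\bv_k\|$. Then for every $x$ and every compactly supported $f\in L^1(U)$ and $\varphi\in C_c(X)$,
\[
\int_U f(u)\,\varphi(\exp(\bv_k)ux)\,du\to\int f\int\varphi .
\]
We need this in an effective form, with a rate uniform over $x$ in compact sets and over bounded perturbations of $\exp\bv_k$. We get it either from effective Shi or from effective mixing via the Margulis thickening argument. The uniformity over bounded perturbations is harmless because $F$ lies in a bounded thickening of $\exp(\mathscr V)$.

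Next comes the Cantor-set construction. Fix a small box $V\subset U$ around $e$, adapted to the $\exp\bv$-expansion. The quasi-ray condition says the increments $\bv_k-\bv_{k-1}$ lie in $\overline\fs$ with norms in $[c,C]$. So $\Ad\exp(\bv_k-\bv_{k-1})$ expands $\fu$ by a factor bounded between two constants $>1$, uniformly in the increment. We choose a block length $N$ and use it to define nested families: at stage $j$ the $U$-boxes are the $\exp(-\bv_{jN})$-conjugates of $V$, translated. Each stage-$j$ box is subdivided into roughly $M_j\approx|\det\Ad\exp(\bv_{(j+1)N}-\bv_{jN})|_\fu|$ subboxes. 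The key step is to discard those subboxes $B$ for which some $\exp\bv_k$ with $jN<k\le(j+1)N$ sends $Bx$ outside a fixed large compact set $K$. Equidistribution bounds the discarded proportion by about $N\mu(X\smallsetminus K)$ plus an error. Taking $K$ large and $N$ large in the right order makes this proportion $\le\varepsilon$.

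Here lies the main obstacle. Shi's theorem is applied to the point $\exp(\bv_{jN})ux$, which varies, and the relevant increments change from block to block. We therefore need the equidistribution error to be uniform over all base points in $K$ (we keep the tracked points in $K$ by induction) and over all increment sequences with steps in $\overline\fs$ of bounded norm. That is why we require $\overline\fs\subset{\fs}_\fu^+\cup\{0\}$: compactness of the set of normalized directions gives a uniform expansion rate. We would first try to derive this uniform statement by contradiction plus compactness from Shi's theorem, passing to limits of base points and of the finitely many normalized increments. Once uniform counting is in place, McMullen's or Urbański's lower bound for strongly tree-like Cantor sets gives Hausdorff dimension at least $\dim U-O(\varepsilon\cdot\text{const}/\log(\text{expansion}))$. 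Every point of the limit set has $F$-orbit within the bounded thickening of $\exp(\bv_k)ux\in K$, so its orbit is bounded.

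For thickness in $U$ we start the construction inside an arbitrary open subset of $U$. Finally, thickness of ${\mathcal B}(F)$ in $X$ follows from the local product structure. Write $G$ locally as $U\cdot P$ with $P=N_G(U)^{0}$-type complement. We check that $F p$ stays within bounded distance of $F'$ for $p$ near $e$, where $F'$ is a quasi-ray relative to $\fs$ of the same kind; since $\Ad\exp\bv$ does not expand $\Lie P$, this holds after shrinking. Then apply the $U$-statement to each point $px$ and use the Marstrand-type product inequality $\dim(E\times P)\ge\dim E+\dim P$.
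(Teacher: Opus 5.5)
\textbf{Gap 1: the cone.} Your ${\fs}_\fu^+$ is the wrong cone, and it is not open.

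\begin{itemize}
\item Requiring $U_+(\exp\bv)=U$ (or nonnegativity of $\ad\bv$ on $\Lie N_G(U)$) forces $\lambda(\bv)\le 0$ for every root $\lambda$ of the Levi factor of the parabolic. Since these roots come in pairs $\pm\lambda$, this means $\lambda(\bv)=0$.
\item So, with $A$ maximal, your cone lies in a proper subspace of $\fa$ unless $U$ is maximal horospherical. For $U=\{u_\Theta\}$ in $\SL_d(\R)$ it is the single ray through $\diag(\frac1m,\dots,\frac1m,-\frac1n,\dots,-\frac1n)$.
\item In that case every admissible $\fs$ is this ray and every quasi-ray is a bounded perturbation of $\{g_t\}$. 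Your statement then collapses to Theorem~\ref{km}(b) and cannot yield Theorem~\ref{badisfull}, which needs the whole cone \equ{aplus}.
\item It is also not Shi's cone. Shi's $\fa_\fu^+$ is the open positive span of the $\bv_\lambda$, $\lambda\in\Phi_\fu$. The paper uses $\fs_\fu^+=\fa_\fu^+\cap\fe_\fu^+$, which is open in the maximal $\fa$ and contains yours (Remark~\ref{shah}).
\end{itemize}

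For $\bv$ in this larger cone, $\Ad\exp\bv$ generally expands directions transverse to $U$. For example, $\diag(a_1,\dots,a_m,-b_1,\dots,-b_n)$ with $a_1>a_2$ expands $E_{12}$ inside the Levi. Your route to effective equidistribution (mixing plus the Margulis thickening argument) therefore works only for your small cone. This is why the paper relies on Shi's theorem itself, in the form of Corollary~\ref{cor:effective}. There the error is $Me^{-\zeta\lfloor\bv\rfloor_{\fs_\fu^+}}$, uniform in $x\in Q$ and in $\bv$. Combined with the concavity \equ{convexity}, this handles sums of increments directly, with no contradiction-and-compactness step. The $\fe_\fu^+$ part supplies the uniform expansion of $U$ (Lemma~\ref{contraction bound lemma}).

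\textbf{Gap 2: the Cantor step.} As described, this step does not close.

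\begin{itemize}
\item You require every $\exp(\bv_k)$ with $jN<k\le(j+1)N$ to send the sub-box into one compact $K$. This fails for $k-jN$ bounded: then $\bv_k-\bv_{jN}$ is not far from the walls, so there is no equidistribution. If $\exp(\bv_{jN})ux$ lies near $\partial K$, almost every sub-box may be discarded.
\item The order of choices is circular. You need $N\mu_X(X\smallsetminus K)\le\varepsilon$, so $K$ depends on $N$. But the equidistribution threshold depends on $K$ through $M(Q,\varphi,\psi)$, whose dependence is unquantified, and it must be reached within a block of length $N$.
\end{itemize}

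The paper avoids all of this because density close to $1$ is not needed:
\begin{itemize}
\item Thin the quasi-ray by Proposition~\ref{subray}, so that every increment satisfies $\lfloor\bv_{i+1}-\bv_i\rfloor_{\fs_\fu^+}\ge t$.
\item Require only the thinned times to land in $K$. The original quasi-ray, and hence $F$, lies in a bounded ($t$-dependent) thickening of the thinned one.
\item Fix any $K\ni x$ with $0<\mu_X(K)<1$. Each stage then has density at least $\mu_X(K)^2$ and contraction at most $\kappa e^{-ct}$.
\item Theorem~\ref{thm: urbanski} gives a dimension deficit of $\frac{2\log(1/\mu_X(K))}{ct-\log\kappa}$, which tends to $0$ as $t\to\infty$ with $K$ fixed.
\end{itemize}

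\textbf{Minor point on the last step.} $N_G(U)$ contains $U$, so it is not a complement; you would want the opposite parabolic. Moreover, the non-expansion of its Lie algebra that you invoke again holds only for your small cone. The paper instead fibers over an arbitrary complement $\exp(\fv)$, applies the $U$-statement at every base point $gx$, and concludes with Marstrand's slicing lemma, which needs no dynamics in the transverse directions.
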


The cone ${{\fs}_\fu^+}$ will be called the \textit{super-expanding cone {with respect} to} {$\fu$} and will be defined in \S\ref{def_cones_section}. {Note that the subgroup $A$ in the above theorem is not defined uniquely (see the beginning of \S\ref{def_cones_section} for an explanation of how $A$ can be chosen),  and the cone ${{\fs}_\fu^+}\subset \fa = \Lie(A)$ depends on the choice of $A$.} It is contained in the \textit{expanding cone} defined by Shi in \cite{Shi}. Roughly speaking, it has the following properties: {translates of $U$-orbits on finite volume \hs s $\ggm$ by elements of the form $\exp(t\bv)$ for $\bv\in {{\fs}_\fu^+}$ get  equidistributed in $\ggm$ and at the same time locally expanded in all directions}. More discussion and examples will follow in {the Appendix}.

\subsection{Dynamics  on the space of lattices and \da} \label{intro2} To recall  the basics of simultaneous \da, we will use the notation $\mr$ for the set of $m \times n$ matrices with coefficients from $\R$.
For $k \in \N$, we will denote by $| \cdot |$  the supremum norm on $\R^k$, by $\| \cdot \|$  the distance to the nearest integer vector; that is, 
$$
\| \bx \| = \min\limits_{\p \in \Z^k} |\bx - \p | \quad\text{for }\bx\in \R^k,
$$
and by $1_k$   the $k\times k$ identity matrix. 
 Dirichlet's theorem in Diophantine approximation states: for any $\Theta \in \mr$ and any   $T \ge 1$ the system of inequalities 
\begin{equation}\label{Dirichlet_f1} 
\begin{cases}
    \|\Theta {\bf q}\|^m &\leq f_1(T) := \frac{1}{T} \\
    |{\bf q}|^n &\leq T
\end{cases}
\end{equation}
has a nonzero  solution $\vq \in \mathbb{Z}^n$. 

More generally, one can replace the approximating function $f_1$ 
by another non-increasing 
function $f:[1,\infty)\to \mathbb{R}_{>0}$ {tending to $0$ at infinity (hereafter referred to as an \underbar{approximation function})}. A matrix $\Theta \in \mr$ is called  \underbar{$f$-approximable} if the system of inequalities 
\begin{equation}\label{Dirichlet_hom}
    \begin{cases}
    \|\Theta {\bf q}\|^m &\leq f(T) \\
    |{\bf q}|^n &\leq T
\end{cases}
    \end{equation}
    has a nonzero  solution $\vq \in \mathbb{Z}^n$ for an unbounded set of $T$;
thus it follows from Dirichlet's theorem that all $\Theta \in \mr$ are $f_1$-approximable.
This motivates the definition of  
badly approximable matrices as the ones for which $f_1$-approximability can only be improved 
up to some multiplicative constant:
 $\Theta \in \mr$ is called \underbar{badly approximable} if it is not ${\varepsilon}f_1$-approximable for some ${\varepsilon} > 0$.
We will use the notation \ba\ for the set of badly approximable  $m \times n$ matrices. 
This set has been extensively studied: namely, it is known that it has zero Lebesgue measure but is thick, the latter being established in the aforementioned paper \cite{Sc2} by Schmidt. 
To describe this set using dynamics, one needs to specialize to $G$, $\Gamma$ and $X$ as in 
\equ{sld}; that is, let $X = X_d$ be the set of unimodular lattices in $\R^d$. This will be a standing assumption until the end of this subsection.
With $F = \{g_t: t\ge 0\}$  as in \equ{defgt},  the Dani  Correspondence \cite[Theorem 2.26]{dani} states that 
\eq{danicorr}{\Theta \in{\bf BA}_{m,n} \iff u_{\Theta} \Z^d\in {\mathcal B}(F),} where \eq{defua}{u_{\Theta}: = \begin{pmatrix}
    1_m & \Theta \\ & 1_n
\end{pmatrix}.} Also it is easy to see that 
\eq{ua}{U_+(g_1) =  \{
u_{\Theta}: \Theta \in \mr \},}
thus the thickness of \ba\ gets to be a special case of Theorem \ref{km}(b).
\smallskip

Now let us consider actions by subsets of $G$ that are not necessarily semigroups. The main \di\ application of Theorem \ref{main} is to studying 
{an analogue of the set \ba}
 in a more general set-up of \textit{approximations with weight functions}. We follow the notation introduced in \cite{N25}.
Let 
$$
\alpha_i:(0,1] \rightarrow (0,1] , \  i = 1, \ldots, m \quad\text{and} \quad  \beta_j:[1,\infty) \rightarrow [1,\infty),\  j = 1, \ldots, n
$$
be  monotonically increasing bijections. 
Take two tuples 
$$
\alf: = ( \alpha_1, \ldots, \alpha_m) \,\,\,\, \text{and} \,\,\,\, \betf: = (\beta_1, \ldots, \beta_n )
$$
as   above and    an approximation function   $f$. Generalizing the system \eqref{Dirichlet_hom}, one can say that 
$\Theta \in \mr$ is  $(f,\alf, \betf)$-approximable if the system of inequalities \begin{equation}\label{Dirichlet_hom_fcns}
    \begin{cases}
    \|\Theta_i {\bf q}\|  &\leq \alpha_i\big(f(T)\big),\ i = 1,\dots,m \\
    |q_j| &\leq \beta_j(T),\quad \ \ \, j = 1,\dots,n
\end{cases}
    \end{equation}
    has a nonzero  solution $\vq \in \mathbb{Z}^n$ for an unbounded set of $T$.
    Here $\Theta_1,\dots,\Theta_m$ are the rows of $\Theta$.
We will use the notation $\wab(f)$ for the set of $\fab$-approximable $m \times n$  matrices.

{At this point} one can readily see that there is room for some simplifications. Indeed, let $\varphi$ and $\psi$ be continuous increasing self-bijections of $(0,1]$ and $[1,\infty)$ respectively. Then, using the change of variables $T = \psi(T')$, one can see that $$\wab(f) = {\bf W}_{\alf\circ \varphi, \betf\circ \psi}(\varphi^{-1}\circ f\circ \psi).$$ 
Thus without loss of generality one can replace functions $\alpha_i$ and $\beta_j$ with $\alpha_i\circ \varphi$ and $\beta_j\circ \psi$ for some conveniently chosen $\varphi$ and $\psi$. In fact, an excellent choice for those turns out to be $\varphi = \left( \prod\limits_{i=1}^m \alpha_i \right)^{-1}$ and $\psi = \left( \prod\limits_{j=1}^n \beta_j \right)^{-1}.$ Such a replacement implies 
 \eq{normalizatiion}{\prod\limits_{i=1}^m \alpha_i (1/T) = 1/T\quad \text{and}\quad\prod\limits_{j=1}^n \beta_j(T) = T,}
which will be our standing assumption for the rest of the paper. Assuming  \equ{normalizatiion}, one can apply Minkowski's Convex Body Theorem {\cite[Ch. III, 
Theorem II]{Cas}}
and conclude that the system \eqref{Dirichlet_hom_fcns} with $f=f_1$ has a nonzero integer solution for any $T \ge 1$. In particular, any \amr\ is $(f_1,\alf, \betf)$-approximable. 
This  motivates a natural generalization of the definition of \ba:  a matrix \amr\ is called $(\alf, \betf)$-badly approximable if it is not $({\varepsilon}f_1,\alf, \betf)$-approximable  for some ${\varepsilon} > 0$.
We will use the notation \baab\ for the set of $m \times n$ $(\alf, \betf)$-badly approximable matrices.

Clearly the definition of \ba\  is a special case, with 
$$\alpha_i(x) = x^{{1}/{m}}, \ i = 1,\dots,m\quad\text{and}\quad \beta_j(T) = T^{{1}/{n}}, \ j = 1, \ldots, n.$$  
Recently  in \cite{N25} the second-named author extended the Dani correspondence \equ{danicorr} 
assuming that functions $\alpha_i$ and $ \beta_j$ satisfy some mild regularity condition referred to as quasimultiplicativity (see definitions and discussion in \S\ref{qmd}). 
Namely it was shown {in \cite{N25}} that, assuming that $\alpha_i$ and $ \beta_j$  are quasimultiplicative, $\Theta\in {\bf BA}_{\alf, \betf}$ if and only if the trajectory of $u_\Theta\Z^d$ under the action of the curve  
\eq{curveinT}{
\begin{aligned}&F^{\alf, \betf}:=\big\{g(t): t\ge 0\big\},\text{ where }\\g(t) := \diag &\left( \frac1{\alpha_1(e^{-t})}, \ldots, \frac1{\alpha_m(e^{-t})}, \frac1{\beta_1(e^{t})}, \ldots, \frac1{\beta_n(e^{t})} \right),
\end{aligned}}
is bounded in $X_d$.
(See Proposition \ref{Dani_corr} 
for a restatement.) Note that $F^{\alf, \betf}\subset G$ in view of \equ{normalizatiion}. 
 In \S\ref{qmd} we  show   that the quasimultiplicativity of $\alf$ and $\betf$ 
implies that $F^{\alf, \betf}$ is a quasi-ray relative to some cone $\fs$ {whose} closure 
is contained in the cone
\eq{aplus}{ \left\{\diag(a_1, \ldots, a_m,-b_1, \ldots, -b_n): a_i,b_j > 0, \ \sum\limits_{i=1}^m a_i = \sum\limits_{j=1}^n b_j \right\}.}
{It is easy to show, see  Example \ref{exampleua}, 
that 
the cone \equ{aplus}} is precisely the super-expanding cone ${\fs_\fu^+}$ for 
${\fu = \Lie\big(U_+(F)\big)}$ as in \equ{ua} and $F$ as in \equ{defgt}. 
This, together with Theorem~\ref{main}, implies

\begin{theorem}\label{badisfull}
    Suppose $\alpha_i$, $i = 1,\dots,m$ and $\beta_j$, $j = 1, \ldots, n$, are quasimultiplicative functions {satisfying \equ{normalizatiion}}. Then
    \baab\ is thick in $\mr$.
\end{theorem}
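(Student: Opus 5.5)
The plan is to deduce Theorem \ref{badisfull} from Theorem \ref{main}, applied with $G=\SL_d(\R)$, $\Gamma=\SL_d(\Z)$, $x=\Z^d$, and $U=U_+(g_1)=\{u_\Theta:\Theta\in\mr\}$ as in \equ{ua}. Here $g_1$ is as in \equ{defgt}, so $b=g_1$ is diagonal and hence $\Ad$-diagonalizable. For $A$ I take the full diagonal subgroup of $G$. The argument then has three ingredients.

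\emph{Step 1 (the cone).} Following Example \ref{exampleua}, the super-expanding cone ${\fs_\fu^+}$ for $\fu=\Lie U$ and this choice of $A$ is exactly the cone \equ{aplus}. I accept this from the example, checking only that the construction of $A$ in \S\ref{def_cones_section} permits the diagonal subgroup.

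\emph{Step 2 (quasi-ray).} I need a cone $\fs$ with $\overline{\fs}\subset{\fs_\fu^+}\cup\{0\}$ such that $F^{\alf,\betf}$ from \equ{curveinT} is a quasi-ray relative to $\fs$. Take $\bv_k:=\log g(k)=\diag(-\log\alpha_i(e^{-k}),\,-\log\beta_j(e^k))$. By \equ{normalizatiion} this is traceless, and the increments are
\[
\bv_k-\bv_{k-1}=\diag\Big(\log\tfrac{\alpha_i(e^{-k+1})}{\alpha_i(e^{-k})},\;-\log\tfrac{\beta_j(e^{k})}{\beta_j(e^{k-1})}\Big).
\]
Quasimultiplicativity, in the form proved in \S\ref{qmd}, should give uniform constants $0<c<C$ such that every coordinate ratio $\log(\alpha_i(e^{-k+1})/\alpha_i(e^{-k}))$ and $\log(\beta_j(e^k)/\beta_j(e^{k-1}))$ lies in $[c,C]$. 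The sums of the two blocks are both equal to $1$ by normalization. Hence each increment lies in the compact set
\[
K=\{\diag(a,-b): a_i,b_j\in[c,C],\ \textstyle\sum a_i=\sum b_j\}.
\]
This set is contained in \equ{aplus} and is bounded away from $0$. Let $\fs$ be the open cone over a small neighborhood of $K$ inside \equ{aplus}. Then $\overline{\fs}\setminus\{0\}$ stays in \equ{aplus}, and $\bv_k\in\fs$ because $\fs$ is convex and closed under addition. For intermediate $t\in[k-1,k]$, the distance between $g(t)$ and $\exp\bv_k$ is bounded by monotonicity together with the same ratio bounds. So $F^{\alf,\betf}$ lies in a bounded thickening of $\exp(\mathscr{V})$.

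\emph{Step 3 (conclusion).} Theorem \ref{main} now says the set of $\Theta$ with $u_\Theta\Z^d\in\mathcal B(F^{\alf,\betf})$ is thick in $U\cong\mr$. By the generalized Dani correspondence (Proposition \ref{Dani_corr}), this set is exactly $\baab$.

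I expect Step 2 to be the main obstacle: extracting from quasimultiplicativity the two-sided bounds on the coordinate increments. The lower bound $c>0$ is what puts the increments in the open cone rather than on its boundary. The first thing I would try is to discretize along a possibly sparser sequence $t_k=kL$ with $L$ large, so that the multiplicative growth forced by quasimultiplicativity dominates the bounded error constants.
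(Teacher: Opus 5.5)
Your plan is the paper's proof: Observation~\ref{obs22} supplies the cone $\fs$ and the quasi-ray, Theorem~\ref{main} gives thickness of $\{u\in U: u\Z^d\in\mathcal B(F^{\alf,\betf})\}$, and Proposition~\ref{Dani_corr}, together with $\Theta\mapsto u_\Theta$ being bi-Lipschitz, identifies this set with $\baab$. The one correction is the point you flagged yourself. With unit steps the lower bound $c>0$ can fail, because quasimultiplicativity only controls ratios $R\ge R_0$. The paper uses exactly your sparser discretization, with step $t_0=\log R_0$, so that each coordinate increment lies in $[\sigma t_0,t_0]$. The lower bound comes from Observation~\ref{obs21}, and the upper bound from monotonicity and \equ{lognormalizatiion}; the same estimate bounds $\dist(g(t),\exp\bv_k)$ for intermediate $t$.
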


Note that a  well-studied special case of the above set-up is given by 
$$\alpha_i(x) = x^{a_i}, \ i = 1,\dots,m,\quad\text{and}\quad \beta_j(T) = T^{b_i}, \ j = 1, \ldots, n,$$
where
$(a_1, \ldots, a_m)$ and $(b_1, \ldots, b_n)$ are an $m$-tuple and an $n$-tuple of positive real numbers
satisfying 
$\sum\limits_{i=1}^m a_i = \sum\limits_{j=1}^n b_j = 1$.
This
is the
set-up of \textit{Diophantine approximation with weights}. 
In this case $F^{\alf, \betf}$ is a semigroup of the form \equ{genericF} with $\bv = \diag(a_1, \ldots, a_m,-b_1, \ldots, -b_n)\in\fs_{\fu}^+$,  the correspondence of Proposition \ref{Dani_corr} in \S\ref{qmd} has been previously worked out by the first-named author in \cite{K98}, and the thickness of \baab\ was proved by Pollington and Velani in the case $\min(m,n) = 1$ and in \cite{KW08} in the general case; see also \cite{BNY} for the hyperplane absolute winning property of this set in the case $\min(m,n) = 1$. However the set-up of arbitrary quasimultiplicative weight functions is much more general. {For a nontrivial example, see \cite[\S 1.7]{N25}.}

\subsection{{The structure of the paper}}\label{struct}
The main goal of \S\ref{def_cones_section} is to revisit the notion of expanding cones with respect  to horospherical subgroups $U$. We need two types of such cones: the one defined in \cite{Shi} ($A^+_U$) that we call algebraically expanding, and the one that is only implicitly present in \cite{Shi}, that we refer to as the geometrically expanding  cone ($E^+_U$). Our focus, however, 
is on the intersection of these cones that we call the super-expanding cone ($S^+_U$). It is prominently featured in the equidistribution estimate that we use for our proof. We also discuss the notion of distance from walls of these cones and use it to simplify the statement of Shi's equidistribution theorem.

\S\ref{mainproof} is devoted to the proof of Theorem \ref{main} via a construction of Cantor-type sets consisting of points with bounded trajectories.  The strategy of the proof follows the lines of \cite{KM}, but certain modifications are needed in order to adapt the argument to actions of sequences instead of semigroups. 
In \S\ref{qmd} we derive the   \di\ consequences of Theorem \ref{main}: full \hd\ of the set of badly approximable systems of linear forms with respect to quasi-multiplicative weight functions (Theorem \ref{badisfull}). This involves a discussion of the notion of quasi-multiplicativity and a generalization of the classical Dani correspondence, that is, relating  bad approximability of matrices to the boundedness of the corresponding  quasi-ray trajectories in the space of lattices. 

At the end of the paper we return to the topic of expanding cones. In the Appendix, which is not needed for the main results of the paper and is included simply for illustrative purposes, we explicitly describe all the algebraically and geometrically expanding  cones inside the diagonal subgroup of $\SL_d(\R)$, and prove necessary and sufficient conditions for these cones lying inside one another.

\subsection{Acknowledgments}
The authors are grateful to Victor Beresnevich, Nikolay Moshchevitin and Ronggang Shi for stimulating discussions.  

\section{Expanding and super-expanding cones}\label{def_cones_section}

    {In this and the next section we keep the assumptions of Theorem \ref{main}. That is,  we let $G$ 
be a connected semisimple Lie
group  without compact factors,  $\Gamma$ an irreducible lattice in $G$, {$X = \ggm$},  
{and $U$  a nontrivial subgroup of $G$ that is expanding horospherical relative to some $\Ad$-diagonalizable element of $G$. Let $H$ be the product of all the simple factors of $G$ such that the projection of $U$ to each of them
is nontrivial. Then $U = U_+(b)$ for some $\Ad$-diagonalizable   $b\in H$.
Equivalently {(see 
discussion on page 302 of \cite{DR80})}, $U$ is 
the unipotent radical of a  parabolic subgroup $P\ni b$ of $H$ that is absolutely proper, that is, the projection of $P$ to each simple factor of $H$ is not surjective. We let $A\ni b$ be a maximal connected $\Ad$-diagonalizable subgroup
of $P$. Note that $b,P$ and $A$ are in general not defined uniquely given $U$ as above.}}

We let $\fg = \Lie(G)$, $\fh = \Lie(H)$, $\fa = \Lie(A)$ 
and $\fu = \Lie(U)$. {Let $\langle\cdot,\cdot\rangle$ and $|\cdot|$ be the inner product and the norm on $\fa$ given by the Killing form  on $\fg$; we will use the same notation for the induced inner product and norm on $\fa^* $, the dual space of $\fa$. 
{Extending $\langle\cdot,\cdot\rangle$ to a positive definite form on $\fg$, one  defines a}  Riemannian structure on $\fg$, the corresponding right-invariant Riemannian metric "$\dist$" on $G$, {and the induced metric on $X$.}
Note that this way the exponential map $\exp: \fa\to A$ becomes an isometry.}

    {Let $\Phi$ be the set of restricted roots of $\fh$ relative to $\fa$, and let $\Phi_\fu : = \{ \lambda \in \Phi: \fh_{\lambda} \subseteq \fu \}$, where 
    $$
    \fh_{\lambda} = \left\{ {\pmb y} \in \fh: [{\bv},{\pmb y}] = \lambda({\bv}){\pmb y} \ \text{for any} \ {\bv} \in \fa \right\}.
    $$}
    Then we have 
    \eq{rootdecomp}{
\mathfrak u=\bigoplus_{\lambda\in\Phi_{\fu}}\fh_\lambda}
    {(see \cite[(7.77a) and (7.77b)]{Knapp}). }
    We consider two cones in $A$ (and cones in $\fa$ corresponding to them) that can be defined for any such $U$.
    
    \begin{itemize}

    \item 
    For any $\lambda \in \fa^*$ 
    define ${\bv}_{\lambda} \in \fa$ via the relation 
\begin{equation}\label{s_alpha_def}
        {\langle {\bv}_{\lambda}, {\bv} \rangle = \lambda({\bv}) \,\,\,\, \text{for every} \,\, {\bv} \in \fa,\text{ or, equivalently, } \mu({\bv}_{\lambda}) = \langle \mu, \lambda \rangle \ \text{for any} \    \mu \in \fa^*,}
        \end{equation}
        and then define the cone $\fa_{\fu}^+$ as
\begin{equation}\label{au_big}
            \fa_{\fu}^+: = \left\{ \sum\limits_{\lambda \in \Phi_\fu } t_{\lambda}{\bv}_{\lambda}: t_{\lambda} > 0 \right\}.
        \end{equation}
        We let $A_U^+: = \exp(\fa_{\fu}^+)$. {These cones were defined in \cite{Shi}, where $A_U^+$ was called the \textit{expanding cone with respect to} $U$, 
        and it was proved that $a\in A$ is in $A_U^+$ if and only if
        for every nontrivial irreducible representation $(\rho,V)$ of $H$, the space of  $\rho(U)$-invariant vectors in $V$ is contained in the span of  eigenvectors of $\rho(a)$ with eigenvalues of modulus bigger than $1$.

        In view of this algebraic restatement we will refer to the cones $\fa_{\fu}^+$ and $A_U^+$ as  \underbar{algebraically expanding} cones.}
        \smallskip 
        
        \item We also define the  \underbar{geometrically expanding} cone  $E_U^+$ 
        to be the set of elements in $A$ expanding $U$ by conjugations; that is, 
        \begin{equation}\label{EU}
            E_U^+: = \{ a \in A: a^{-k} u a^k \rightarrow 
            {e}\text{ as } k\to\infty\text{ for any} \,\, u \in U \} = \{a\in A: U\subset U_+(a)\}.
        \end{equation}
        It is clear that $E_U^+$ is a cone in $A$. 
        On {the} Lie algebra level it corresponds to the cone
        \begin{equation}\label{eu}
            \fe_{\fu}^+: = \{ {\bv} \in \fa: \lambda({\bv}) > 0 \,\, \text{for any} \,\, \lambda \in \Phi_\fu  \}.
        \end{equation}

    \end{itemize}

In the present paper we need a cone with both {algebraic and geometric} expanding properties.
    
    \begin{definition}\label{superexp}
        We define the \underbar{super-expanding cone} with respect to $U$ in $A$ by $S_U^+: = E_U^+ \cap A_U^+$. On the Lie algebra level, we analogously define the {super-expanding cone} with respect to $\fu$ in $\fa$ by $
        \fs_\fu^+: = \fe_\fu^+ \cap \fa_\fu^+$.
    \end{definition}
    It follows from the discussion above that $S_U^+ = \exp(\fs_{\fu}^+)$. 

    \begin{example}\label{exampleua} {Let us illustrate these notions in the case that is important for \di\ applications, that is, $H = G = \SL_d(\R)$ for $d\ge 2$, $U$ of the form \equ{ua} for some $m,n\in \N$ with $m+n=d$, and $A$ the full diagonal subgroup of $H$. In this case $U$ is a minimal {(equivalently for $\SL_d(\R)$, abelian)} horospherical subgroup of $H$.
    This example is thoroughly described in Shi's paper, see \cite[Example 1.1 and \S2.3]{Shi}, where it is  shown that  the algebraically expanding cone $A_U^+$ is of the form
    \begin{equation}\label{ex22_cone_a}\left\{\diag(e^{a_1}, \ldots, e^{a_m},e^{-b_1}, \ldots, e^{-b_n}): a_i,b_j > 0, \ \sum\limits_{i=1}^m a_i = \sum\limits_{j=1}^n b_j \right\},\end{equation}
    that is, the exponential image of the cone \equ{aplus}. On the other hand, it is clear that the  geometrically expanding cone {with respect to $U$} is given by
    $$E_U^+ =  \left\{\diag(e^{a_1}, \ldots, e^{a_m},e^{-b_1}, \ldots, e^{-b_n}): a_i+b_j > 0, \ \sum\limits_{i=1}^m a_i = \sum\limits_{j=1}^n b_j \right\}.$$
    Hence in this case $S_U^+ = A_U^+ \subset E_U^+$.} {{We note that this is the only possible example of an upper-triangular horospherical subgroup for which the above  inclusion holds (see Proposition \ref{a_in_e_sl}). In the case of an arbitrary semisimple $H$ the inclusion $A_U^+ \subset E_U^+$ is true for any abelian horospherical subgroup $U$, but not necessarily true for any minimal one; see \S \ref{abelian} for details and proof.}}
    \end{example}

\begin{example}\label{examplemax} For comparison let us consider the case when $U$ is maximal horospherical in  $H =  \SL_d(\R)$, which essentially (modulo conjugation) means that $U$ is the upper-triangular unipotent subgroup. Then it is easy to show {(see \eqref{e_sld_min} for an expression for $E_U^+$, where $U$ is an  arbitrary upper-triangular horospherical  subgroup of $\SL_d(\R)$)} that $E_U^+$ is a Weyl chamber in $A$, that is,  $$E_U^+ =  \left\{\diag(e^{y_1}, \ldots, e^{y_d}): y_i > y_j \text{ for } i<j,\text{ and } \sum\limits_{i=1}^d y_i = 0 \right\}.$$ Also, {as shown in Lemma \ref{a_plus_in_coord},} the algebraically expanding cone $A_U^+$ is of the form
    {
\begin{equation}\label{ex23_cone_a}
    \left\{\diag(e^{y_1}, \ldots, e^{y_d}):  \sum\limits_{i=1}^k y_i > 0 \text{ for } 1 \leq k \leq d-1,\text{ and } \sum\limits_{i=1}^d y_i = 0 \right\}
    \end{equation}
       Hence in this case $S_U^+ = E_U^+ \subset A_U^+$. There are other possible cases when the inclusion $E_U^+ \subset A_U^+$ holds; see Proposition \ref{eina_prop} for a criterion.
        }
    \end{example}

     \begin{remark}\label{shah} \rm It follows from \cite[Lemma 5.2]{s96} that the cone $\{a\in A: U_+(a) \subset U\}$ is a subset of the expanding cone $A_U^+$. This implies that the smaller (but still non-empty) cone $\{a\in A: U_+(a) = U\}$ is contained in the super-expanding cone $S_U^+$. In particular $S_U^+\ne\varnothing$. {Also it easily follows {from the definition of $E_U^+$ and the representation-theoretic description of $A_U^+$ discussed above} 
     that for two nontrivial horospherical subgroups $U_1\subset U_2$ it holds that $A_{U_1}^+\subset A_{U_2}^+$ and $E_{U_1}^+\supset E_{U_2}^+$. However the two super-expanding cones $S_{U_1}^+$ and $ S_{U_2}^+$ do not have to be contained in one another, as illustrated by Examples \ref{exampleua} and \ref{examplemax}.} 
 \end{remark}

    Next we describe the main application of expanding and super-expanding cones coming from Shi's paper \cite{Shi}: equidistribution results for actions on \hs s. Let us start by recalling a certain way to measure the distance of elements of cones from walls that was employed in \cite{Shi}. Suppose {$C\subsetneq A$ is a convex open cone  (here we endow $A$ with a vector space structure by means of the exponential map $\fa\to A$),} and let  $( b_t)_{t\in \R}$ be
 a one-parameter   subgroup  of $A$ such that   the positive ray $\{ b_t : t> 0\}$ is contained in $C$. 
Then 
 for $a\in A$   one can define 
\begin{align}\label{Cfloor} 
\lfloor a\rfloor_{C,b}  := \sup \{t\ge 0:  ab_{-t}\in C\},
\end{align}
    where $b$ stands for $b_1$. It is easy to see that, for any $(b_t)$ as above, $\lfloor a \rfloor_{C,b} $ is positive if and only if $a \in C$, and is zero otherwise (modulo the convention $\sup \varnothing = 0$).
    
    Now suppose that $b$ is such that the following two conditions hold:  \eq{nontriv}{\text{the projection 
of $b$ to each simple factor of {$H$} is not trivial,}} and \eq{contain}{\text{the expanding horospherical subgroup ${U'} = U_+(b)$ is contained in }U.} Then, as was mentioned in Remark \ref{shah},  the positive ray $\{ b_t : t> 0\}$ is contained in $A_U^+$.
   In \cite[(1.9)]{Shi}  the following expression was defined:
  \begin{align}\label{shifloor}
\lfloor a\rfloor:= \sup \{t\ge 0:  ab_{-t}\in A_U^+, \, {U'}  \subset  U_+({ab_{-t}})\}.  
\end{align}
    Combining \eqref{Cfloor} with \eqref{EU}, one can immediately see that     $\lfloor a\rfloor$ as above is equal to 
 $\lfloor a\rfloor_{A_U^+\cap {E_{U'}^+},b}$. 
 
 \smallskip
 Let us now state a special case of \cite[Theorem 1.5]{Shi}.  For that we  let $\mu_G$ be the Haar measure on $G$ which locally projects to $\mu_X$, the probability Haar measure on $X$. 
 Also  choose a Haar measure $\mu_U$ on $U$. 

\begin{theorem}\label{thm:effective}
{Let  
$G, X, U, H$ and $A$ be as above, }and let   
$b\in A$ be such that \equ{nontriv} and \equ{contain} hold.
Then there exists {$\zeta=\zeta( H, X,   b )>0$} with the following property:
for   any compact subset ${Q}$ of $ X $, any 
$\varphi \in C^\infty_c(U)$ and any $\psi \in  
C^\infty_c(X)$ 
  there exists $M=M({Q}, \varphi, \psi)>0$
  such that for any $x \in {Q}$ and $a\in A$ one has
  \eq{equidistr}{
    \left| \int\limits_U \varphi(u) \psi(aux) \,d \mu_U - \int\limits_U \varphi \,d \mu_U \int\limits_X \psi \,d \mu_X  \right| \leq Me^{- \zeta \lfloor a \rfloor},
 }
    where $\lfloor \cdot \rfloor = \lfloor \cdot\rfloor_{A_U^+\cap {E_{U'}^+},b}$ is as defined in \eqref{shifloor}.
\end{theorem}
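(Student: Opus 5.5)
The plan is not to reprove the equidistribution estimate. Instead, \eqref{eq:equidistr} will be deduced directly from \cite[Theorem 1.5]{Shi}, by checking that our hypotheses match Shi's and that his "distance to walls" quantity coincides with $\lfloor a \rfloor_{A_U^+\cap E_{U'}^+,b}$.

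First I will check the setting. Shi works with a semisimple group $G$ without compact factors, an irreducible lattice, a horospherical subgroup $U$ of the factor $H$, and a maximal $\Ad$-diagonalizable subgroup $A$ of the parabolic $P$ normalizing $U$. This is exactly what was fixed at the start of \S\ref{def_cones_section}. Shi's auxiliary element $b$ is required to satisfy two conditions: that $U_+(b)\subset U$, and that $b$ projects nontrivially to every simple factor of $H$. These are \equ{contain} and \equ{nontriv}, respectively.

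Next I will identify the quantities in the estimate. By Remark \ref{shah}, $\{b_t:t>0\}\subset A_U^+$, so the expression \eqref{shifloor}, which is \cite[(1.9)]{Shi}, is well defined. We have $U'=U_+(b)$, and $U'\subset U_+(ab_{-t})$ holds if and only if $ab_{-t}\in E_{U'}^+$, by \eqref{EU}. Hence \eqref{shifloor} equals $\lfloor a\rfloor_{A_U^+\cap E_{U'}^+,b}$, as already noted before the statement.

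It then remains to specialize Shi's theorem. His Theorem 1.5 is stated for Sobolev-norm test functions, and possibly with a constant that depends explicitly on norms of $\varphi$, $\psi$ and on the injectivity radius over $Q$. Fixing $\varphi\in C^\infty_c(U)$, $\psi\in C_c^\infty(X)$ and a compact $Q$, all these dependencies collapse into a single constant $M(Q,\varphi,\psi)$. The rate $\zeta$ depends only on $H$, $X$ and $b$, the last through the normalization of $\lfloor\cdot\rfloor$.

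The main point requiring care is this bookkeeping. One must confirm that Shi's normalizations of $\mu_U$, $\mu_X$ and the $b_t$-parametrization agree with ours, so that the exponent is exactly $-\zeta\lfloor a\rfloor$. Any rescaling of $t$ only changes $\zeta$, and any change of Haar normalization on $U$ only changes $M$. Finally, if $a\notin A_U^+\cap E_{U'}^+$, then $\lfloor a\rfloor=0$ and the bound is trivial after enlarging $M$ to exceed $2\|\varphi\|_{L^1}\|\psi\|_\infty$.
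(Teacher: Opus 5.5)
Your route is the same as the paper's: the statement is simply \cite[Theorem~1.5]{Shi} specialized to the present setting. The parts you check are fine. Conditions \equ{nontriv} and \equ{contain} are Shi's conditions on $b$, and \eqref{shifloor} equals $\lfloor a\rfloor_{A_U^+\cap E_{U'}^+,b}$ by \eqref{EU}. Your handling of $M$, $\zeta$ and the trivial range $\lfloor a\rfloor=0$ is also correct.

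However, your hypothesis check has a gap at the one point that needs an argument. Shi's Theorem~1.5 is not stated for ``an irreducible lattice in a semisimple group without compact factors.'' It is stated under the assumption that the action of $H$ on $X$ has a spectral gap, i.e.\ that $L^2_0(X)$, viewed as an $H$-representation, has no almost invariant vectors. This does not follow formally from the setting. $H$ is only the product of the simple factors of $G$ onto which $U$ projects nontrivially, so it may be a proper factor of $G$. For a reducible lattice $\Gamma=\Gamma_1\times\Gamma_2\subset G_1\times G_2$ with $U\subset H=G_1$, the translates $aUx$ stay inside the closed null set $G_1x$. Then for nonzero $\varphi\ge 0$ and $\psi\ge 0$ with $\psi$ vanishing on $G_1x$, the left side of \equ{equidistr} equals $\int\varphi\,d\mu_U\int\psi\,d\mu_X>0$ for every $a\in A$, so the estimate fails.

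What you must add is the following theorem: for an irreducible lattice $\Gamma$ in a connected semisimple Lie group without compact factors, each simple factor of $G$, and hence $H$, acts on $L^2_0(G/\Gamma)$ with a spectral gap (the strong spectral gap property). This is a deep input, stronger than the ergodicity you would get from Moore's theorem. It is exactly where the irreducibility of $\Gamma$ enters. The paper supplies it by referring to the discussion in \cite[p.~285]{KS09}. With that step included, your deduction matches the paper's.
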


We note that \cite[Theorem 1.5]{Shi} was stated under the assumption that the action of {$ H $} on $X$ has a spectral gap. The latter is known to hold for quotients of    connected semisimple Lie
group  without compact factors by  irreducible lattices; {see a discussion 
in \cite[p.\ 285]{KS09} for details and proof.}

\smallskip

{Next let us observe}  that $\lfloor \cdot \rfloor$ {in the above theorem} can be chosen in a $b$-independent way. {Denote by $\fa_1^*$ the unit sphere in $\fa^*$.}
Let $C$ be a 
cone in $A$, and {put} $\fc = {\log}(C)$. {Define} $$\mathcal{F}(\fc): = \{ \lambda \in \fa_1^*: \lambda({\bv}) > 0 \ \forall \,{\bv} \in \fc \}.$$ 
{It is known} (see \cite[Lemma 2.17]{AT}) 
that, {whenever $\fc$ is non-empty, open and convex, one has}
\begin{equation}\label{e_c_defines_cone}
    \fc = \{ {\bv} \in \fa: \lambda({\bv}) > 0 \ \forall\, \lambda \in \mathcal{F}(\fc) \}.
\end{equation}
{This} allows us to prove 

\begin{lemma}\label{b_doesnt_matter}
    Let {$C\subsetneq A$} be a {non-empty} convex open cone. Let $a, b \in C$, {and put} $\fc = \log(C)$ {and}  $\bv = \log(a)$. Then 
    $$
    \lfloor a \rfloor_{C,b} \asymp_b \lfloor a \rfloor_{C}: = \dist(a, \partial C) = {\dist(\bv, \partial \fc)} = \min\limits_{\lambda \in \mathcal{F}(\fc)} \lambda({\bv}),
    $$
    where the implied constant depends on $b$.
\end{lemma}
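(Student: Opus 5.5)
Since $\exp:\fa\to A$ is an isometry, everything reduces to linear algebra in $\fa$: write $\bv=\log a$, $\pmb w=\log b$, so $\lfloor a\rfloor_{C,b}=\sup\{t\ge 0: \bv-t\pmb w\in\fc\}$. I plan to establish the equalities in the chain first and then the two-sided comparison with constants depending only on $b$.

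\emph{Step 1: the equalities.} $\dist(a,\partial C)=\dist(\bv,\partial\fc)$ holds because $\exp$ is an isometry. For $\dist(\bv,\partial\fc)=\min_{\lambda\in\mathcal F(\fc)}\lambda(\bv)$ I use \eqref{e_c_defines_cone}. For a unit functional $\lambda$, the distance from $\bv$ to the hyperplane $\{\lambda=0\}$ is $\lambda(\bv)$. The ball of radius $r$ about $\bv$ lies in $\fc$ iff $\lambda(\bv+\pmb y)>0$ for all $\lambda\in\mathcal F(\fc)$ and $|\pmb y|<r$. Since $\inf_{|\pmb y|<r}\lambda(\pmb y)=-r$, this holds iff $\lambda(\bv)\ge r$ for all such $\lambda$. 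Hence $\dist(\bv,\partial\fc)=\inf_{\lambda\in\mathcal F(\fc)}\lambda(\bv)$. To see that the infimum is attained, I check that the closure of $\mathcal F(\fc)$ in the compact sphere $\fa_1^*$ consists of $\lambda$ with $\lambda\ge 0$ on $\fc$, and that the infimum over this closure is the same. Since the minimum is only used as a value, writing $\min$ is justified by compactness. (If $\bv\notin\fc$, both sides should be read as $0$, matching the convention $\sup\varnothing=0$; for $a\in C$ both sides are positive.)

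\emph{Step 2: comparison.} Set $c_1=\inf_{\lambda\in\mathcal F(\fc)}\lambda(\pmb w)=\dist(\pmb w,\partial\fc)>0$, which is positive because $b\in C$ and $C$ is open, and $c_2=|\pmb w|$. By \eqref{e_c_defines_cone}, $\bv-t\pmb w\in\fc$ iff $\lambda(\bv)>t\lambda(\pmb w)$ for all $\lambda\in\mathcal F(\fc)$. For all such $\lambda$ we have $c_1\le\lambda(\pmb w)\le c_2$.

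For the upper bound, suppose $t$ is admissible. Then $\lambda(\bv)>t\lambda(\pmb w)\ge tc_1$ for every $\lambda$, so $\lfloor a\rfloor_C\ge tc_1$. This gives $\lfloor a\rfloor_{C,b}\le c_1^{-1}\lfloor a\rfloor_C$.

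For the lower bound, take $t<\lfloor a\rfloor_C/c_2$. Then for each $\lambda$, $t\lambda(\pmb w)\le tc_2<\lfloor a\rfloor_C\le\lambda(\bv)$, so $t$ is admissible. This gives $\lfloor a\rfloor_{C,b}\ge c_2^{-1}\lfloor a\rfloor_C$.

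Together, $c_2^{-1}\lfloor a\rfloor_C\le\lfloor a\rfloor_{C,b}\le c_1^{-1}\lfloor a\rfloor_C$, with constants depending only on $b$ (and $C$).

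\emph{Main obstacle.} The only delicate point is the attainment and strictness issues in Step 1. These are handling sup versus inf over a possibly non-closed set $\mathcal F(\fc)$, and checking that $c_1>0$ is genuinely the infimum rather than something degenerate. I would handle both by passing to the closure of $\mathcal F(\fc)$ in $\fa_1^*$ and using openness of $\fc$.
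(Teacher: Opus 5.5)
Your proof is correct and follows essentially the paper's route. Both arguments rest on the dual description \eqref{e_c_defines_cone} and on the fact that $\lambda(\bv)=\dist(\bv,\Ker\lambda)$ for unit $\lambda$, and both arrive at the same two-sided bound $\lfloor a\rfloor_C/|\log b|\le\lfloor a\rfloor_{C,b}\le\lfloor a\rfloor_C/\dist(\log b,\partial\fc)$. The only difference in the comparison step is phrasing: the paper works with the boundary point $\bv-\lfloor a\rfloor_{C,b}\log b\in\partial\fc$, while you test admissibility of $t$ functional by functional.

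The attainment issue you flag as the main obstacle is moot. The paper obtains the minimizing $\lambda_0$ directly by applying \eqref{e_c_defines_cone} at a nearest boundary point. In any case $\mathcal F(\fc)$ is already closed in $\fa_1^*$, since a nonzero functional that is $\ge 0$ on an open set is $>0$ there.
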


\begin{proof} {The equality $\dist(a, \partial C) = \dist(\bv, \partial \fc)$ is clear since we chose the metric so that the exponential map $\fa\to A$ is an isometry.} 
Let us first show that $\dist(a, \partial C) = \min_{\lambda \in \mathcal{F}(\fc)} \lambda({\bv})$. 
Let ${\pmb w} \in \partial \fc$ be 
such that $\dist(\bv, \partial \fc) = \dist(\bv, {\pmb w})$. Since ${\pmb w} \in \partial \fc$, {by \eqref{e_c_defines_cone} }
{there exists $\lambda_0 \in \mathcal{F}(\fc)$ such that $\lambda_0({\pmb w})$ is non-positive, and by the continuity of $\lambda_0$ we conclude that $\lambda_0({\pmb w}) = 0$}. Thus $\dist(\bv, {\pmb w}) \geq \dist(\bv, \Ker \lambda_0) = \lambda_0(\bv)$. 
{Conversely, for any ${\bv} \in \fc$ and $\lambda \in \mathcal{F}(\fc)$ one has $\Ker \lambda \cap \fc = \varnothing$ and $\lambda({\bv}) = \dist(\bv, \Ker \lambda)$, therefore, 
\begin{equation}\label{ineq_lambda_dist}
    \lambda({\bv}) \geq \dist(\bv, \partial \fc) \,\,\,\, \text{for any} \ {\bv} \in \fc, \ \lambda \in \mathcal{F}(\fc),
\end{equation}}
{hence} $\lambda_0(\bv) = \min_{\lambda \in \mathcal{F}(\fc)} \lambda({\bv})$, and we conclude that
\begin{equation}\label{eq_through_lambda0}
    {\dist(\bv, \partial \fc)} = \lambda_0(\bv) = \min\limits_{\lambda \in \mathcal{F}(\fc)} \lambda({\bv}).
\end{equation}

{Next}
we prove the '$\asymp_b$' part. {Let ${\pmb y} = \log b$.} By definition, 
\begin{equation}\label{boundary_elt}
\bv - \lfloor a \rfloor_{C,b}{{\pmb y}} \in \partial \fc,
\end{equation}
and so 
$$
\dist(a, \partial C) = \dist(\bv, \partial \fc) \stackunder[1pt]{{}\leq{}}{\scriptstyle  \eqref{boundary_elt}} \dist(\bv, \bv - \lfloor a \rfloor_{C,b}{{\pmb y}}) = \lfloor a \rfloor_{C,b}|{{\pmb y}}|.
$$
Conversely, 
$$
\dist(a, \partial C) \stackunder[1pt]{{}={}}{\scriptstyle\eqref{eq_through_lambda0}} \lambda_0(\bv) =   \lambda_0(\bv - \lfloor a \rfloor_{C,b}{{\pmb y}}) + \lfloor a \rfloor_{C,b} \lambda_0({{\pmb y}}) 
$$
$$
\stackunder[1pt]{{}\text{}\geq{}}{\scriptstyle \substack{ \lambda_0(\bv - \lfloor a \rfloor_{C,b}{{\pmb y}}) \geq 0 \\ \text{by} \ \eqref{boundary_elt}}} \lfloor a \rfloor_{C,b} \lambda_0({{\pmb y}}) \stackunder[1pt]{{}\geq{}}{\scriptstyle  \eqref{ineq_lambda_dist}}  \lfloor a \rfloor_{C,b} \dist({{\pmb y}}, \partial \fc)  = \lfloor a \rfloor_{C,b} \dist(b, \partial C),
$$
{which finishes the proof.}
\end{proof}

{The above lemma shows that the statement of Theorem~\ref{thm:effective} can be simplified with the dependence on $b$ removed. Furthermore, in this paper we are interested in the super-expanding cone $S_U^+ = A_U^+\cap E_U^+$. As we saw in  Remark \ref{shah},   if, strengthening \equ{contain},   one chooses $b$ such that  $U = U_+(b)$, then   the positive ray $\{ b_t : t> 0\}$ will be contained in $S_U^+$. Moreover,  
 \equ{nontriv} will hold automatically since $P$ was {chosen to be absolutely proper in $H$}. 
In view of that, as well as of   Lemma \ref{b_doesnt_matter}, Theorem~\ref{thm:effective} implies}
 
\begin{corollary}\label{cor:effective}
{Let 
$G, X, U, H$ and $A$ be as above}.
Then there exists {$\zeta=\zeta( H, X, {U, A})>0$} with the following property:
for any compact subset ${Q}$ of $ X $, any 
$\varphi \in C^\infty_c(U)$ and any $\psi \in 
C^\infty_c(X)$ 
  there exists $M=M({Q}, \varphi, \psi)>0$
  such that \equ{equidistr}, with $\lfloor \cdot \rfloor = \lfloor \cdot\rfloor_{S_U^+}$, holds for any $x \in {Q}$ and $a\in {S_U^+}$.
\end{corollary}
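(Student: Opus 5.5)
The plan is to apply Theorem~\ref{thm:effective} with one fixed, carefully chosen $b$, and then use Lemma~\ref{b_doesnt_matter} to replace Shi's function $\lfloor\cdot\rfloor$ by $\lfloor\cdot\rfloor_{S_U^+}$.

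\textbf{Choice of $b$.} Take $b$ to be the $\Ad$-diagonalizable element from the beginning of the section, so that $b\in A\cap H$ and $U=U_+(b)$. Then $U'=U_+(b)=U$, and \equ{contain} holds trivially. Condition \equ{nontriv} also holds. Indeed, if the projection of $b$ to some simple factor $H_i$ of $H$ were trivial, then the projection of $U_+(b)$ to $H_i$ would be trivial. This contradicts the definition of $H$, whose factors are exactly those onto which $U$ projects nontrivially; equivalently, $P$ is absolutely proper. So Theorem~\ref{thm:effective} applies with this $b$. It produces $\zeta=\zeta(H,X,b)$, and since $b$ is fixed once $U$ and $A$ are fixed, we may regard $\zeta$ as depending on $(H,X,U,A)$. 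Moreover, for every $x\in Q$ and $a\in A$ we get the bound \equ{equidistr} with $\lfloor a\rfloor=\lfloor a\rfloor_{A_U^+\cap E_{U'}^+,b}=\lfloor a\rfloor_{S_U^+,b}$, because $E_{U'}^+=E_U^+$.

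\textbf{Applying the lemma.} To invoke Lemma~\ref{b_doesnt_matter} with $C=S_U^+$ we must check its hypotheses.
\begin{itemize}
\item $C$ is a non-empty convex open proper cone. It is open and convex because $\fa_\fu^+$ (open positive span of finitely many vectors $\bv_\lambda$) and $\fe_\fu^+$ (finite intersection of open half-spaces) are both open convex cones, hence so is their intersection. It is proper since $\fe_\fu^+\neq\fa$ ($\Phi_\fu\neq\varnothing$ because $U$ is nontrivial).
\item $b\in C$, indeed the whole ray $\{b_t:t>0\}\subset S_U^+$. This holds because $U_+(b_t)=U$ for $t>0$, so Remark~\ref{shah} places $b_t$ in the cone $\{a:U_+(a)=U\}\subset S_U^+$.
\end{itemize}
The lemma then gives $\lfloor a\rfloor_{S_U^+,b}\ge c_b\lfloor a\rfloor_{S_U^+}$ for all $a\in S_U^+$. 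Here $c_b=1/|\log b|$, read off from the inequality $\dist(a,\partial C)\le \lfloor a\rfloor_{C,b}|\pmb y|$ in the proof. Hence $Me^{-\zeta\lfloor a\rfloor_{S_U^+,b}}\le Me^{-(\zeta c_b)\lfloor a\rfloor_{S_U^+}}$, and replacing $\zeta$ by $\zeta c_b$ gives the corollary. Only this one direction of the comparison is needed.

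\textbf{Main obstacle.} The delicate point is the hypotheses on $b$ rather than the estimate itself. One must verify that $b$ can be taken in $A$ with $U_+(b)=U$ exactly (this is built into the choice $A\ni b$), and that \equ{nontriv} holds on $H$. Both follow from the setup at the start of \S\ref{def_cones_section}.
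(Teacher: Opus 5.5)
Your proposal is correct and is essentially the paper's own argument. You take $b\in A$ with $U_+(b)=U$, which makes \equ{contain} trivial, gives \equ{nontriv} via absolute properness of $P$, puts the ray $\{b_t: t>0\}$ inside $S_U^+$ by Remark~\ref{shah}, and identifies Shi's $\lfloor\cdot\rfloor$ with $\lfloor\cdot\rfloor_{S_U^+,b}$; Lemma~\ref{b_doesnt_matter} then lets you absorb the constant into $\zeta$, and indeed only the inequality $\lfloor a\rfloor_{S_U^+}\le |\log b|\,\lfloor a\rfloor_{S_U^+,b}$ is needed.

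The one loose phrase is your reason for $\fa_\fu^+$ being open: an open positive span of finitely many vectors is open in $\fa$ only if they span $\fa$, which here means $\Phi_\fu$ must span $\fa^*$. The paper likewise takes openness of $\fs_\fu^+$ for granted, so this does not affect the argument.
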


The last
{part of the} section connects the quasi-ray property with {the notion of} "distance to walls" (cf.\ \cite[(1.8)]{KW08}) and allows us to apply Corollary \ref{cor:effective} in the set-up of Theorem \ref{main}. {It will be convenient to slightly abuse notation and for an element $\bv$ of a {convex} 
cone $\fc\subsetneq \fa$   denote}  
$$\lfloor \bv \rfloor_{\fc}: = \dist(\bv, \partial \fc).$$
Note that the function $\lfloor \cdot \rfloor_{\fc}$ has a concavity property whenever $\fc$ is convex: \eq{convexity}{\lfloor \bv_1 + \bv_2 \rfloor_{\fc} \ge \lfloor \bv_1 \rfloor_{\fc} + \lfloor \bv_2 \rfloor_{\fc}\text{ for all }\bv_1,\bv_2\in \fc.}
This can be easily shown using \eqref{eq_through_lambda0}.

{Now for a sequence {$\mathscr{V} = (\bv_k)_{k\in\N}$} of elements of   
$\fa$ such that the differences $\bv_k - \bv_{k-1}$ (here we again put $\bv_0 = 0$) lie in a    convex     cone $\fc\subsetneq \fa$, let us define 
$$
\rho_\fc(\mathscr{V}) := \inf\limits_{k \in \N}\lfloor\bv_k - \bv_{k-1}\rfloor_{\fc}
=\inf\limits_{\lambda \in \mathcal{F}(\fc), \,k \in \N}
            \lambda(\bv_k - \bv_{k-1}) ,
$$
where the equality above is given by Lemma  \ref{b_doesnt_matter}.
\begin{lemma}\label{from_s_to_sc}
        {For $\fc$ and $\mathscr{V}$ as above,} the following are equivalent:
        \begin{enumerate}
            [label= \rm (\roman*)]\item\label{stosc1} There exists {a convex} open cone $\fs$ in $\fa$ with $\overline{\fs} \subset \fc \cup \{ 0 \}$  such that {$\mathscr{V}$}
            is a quasi-ray in $\fs$.
            \item\label{stosc2} $\mathscr{V}$
            is a quasi-ray in $\fc$, and ${\rho_\fc(\mathscr{V})
            >0}$.
            \item\label{stosc3} The differences $\bv_k - \bv_{k-1}$ are uniformly bounded, 
            and 
            ${\rho_\fc(\mathscr{V})
            >0}$.
        \end{enumerate}
    \end{lemma}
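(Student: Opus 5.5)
We prove the cycle of implications (i)$\Rightarrow$(ii)$\Rightarrow$(iii)$\Rightarrow$(i), writing $\bv_k' := \bv_k - \bv_{k-1}$ throughout. Here $\fc$ is a convex cone, and since $\rho_\fc$ is defined through $\lfloor\cdot\rfloor_\fc = \dist(\cdot,\partial\fc)$ and Lemma \ref{b_doesnt_matter}, we may work with the open cone.

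\emph{(ii)$\Rightarrow$(iii)} is immediate. The quasi-ray condition says the set $\{\bv_k'\}$ is precompact in $\fa\nz$, so the norms $|\bv_k'|$ are uniformly bounded.

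\emph{(i)$\Rightarrow$(ii).} Let $\fs$ be as in (i). Then $\bv_k'$ lies in $\overline{\fs}$, and the set $K:=\overline{\{\bv_k'\}}$ is a compact subset of $\overline{\fs}\nz\subset\fc$. Since $\fc\supset\fs$, the sequence $\mathscr V$ is also a quasi-ray in $\fc$. The function $\bv\mapsto\dist(\bv,\partial\fc)$ is continuous and strictly positive on $\fc$, so it attains a positive minimum on the compact set $K$. That minimum is a lower bound for $\rho_\fc(\mathscr V)$, which is therefore positive.

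\emph{(iii)$\Rightarrow$(i).} This is the main step. Put $\rho:=\rho_\fc(\mathscr V)>0$ and $R:=\sup_k|\bv_k'|<\infty$.

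First we check that the differences are nondegenerate. Each $\bv_k'$ lies in $\fc$ at distance at least $\rho$ from $\partial\fc$. Because $\fc\ne\fa$, its boundary contains $0$, so $|\bv_k'|\ge\rho$. Hence the differences stay in the compact set $K_0 := \{\bv\in\overline{\fc}: \rho\le|\bv|\le R,\ \dist(\bv,\partial\fc)\ge\rho\}\subset\fc$.

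Next we build the cone. Define $\fs:=\{t\bv: t>0,\ \bv\in\fc,\ |\bv|=1,\ \dist(\bv,\partial\fc)>\rho/(2R)\}$. This is exactly the set of $\bv\in\fc$ with $\dist(\bv,\partial\fc)>\frac{\rho}{2R}|\bv|$, using homogeneity of $\lfloor\cdot\rfloor_\fc$. We verify its properties as follows.
\begin{itemize}
\item It is open, since both defining functions are continuous.
\item It is convex. If $\bv_1,\bv_2\in\fs$, the concavity \equ{convexity} gives $\lfloor \bv_1+\bv_2\rfloor_\fc\ge \frac{\rho}{2R}(|\bv_1|+|\bv_2|)\ge\frac{\rho}{2R}|\bv_1+\bv_2|$.
\item It contains every $\bv_k'$. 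Indeed $\lfloor\bv_k'\rfloor_\fc\ge\rho\ge\frac{\rho}{R}|\bv_k'|>\frac{\rho}{2R}|\bv_k'|$.
\item Its closure satisfies $\overline{\fs}\subset\{\bv: \lfloor \bv\rfloor_\fc\ge\frac{\rho}{2R}|\bv|\}\subset \fc\cup\{0\}$. A nonzero vector with positive distance to $\partial\fc$ lying in $\overline\fc$ is in $\fc$.
\end{itemize}

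It remains to see that $\mathscr V$ is a quasi-ray in $\fs$. The differences lie in $\fs\subset\overline\fs$, and their set lies in the compact $K_0\subset\fa\nz$, hence is precompact there. Finally $\bv_k=\sum_{i\le k}\bv_i'\in\fs$ by convexity, so $\mathscr V\subset\fs$.

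The one point needing care is the inequality $|\bv|\ge\dist(\bv,\partial\fc)$, which relies on $0\in\partial\fc$; this holds because $\fc$ is a proper cone.
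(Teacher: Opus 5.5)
Your proof is correct and follows essentially the same route as the paper: the same cycle of implications, the same lower bound $|\bv_k-\bv_{k-1}|\ge\rho$, and the same cone $\{\bv\in\fc:\lfloor\bv\rfloor_\fc>\frac{\rho}{2R}|\bv|\}$, even with the same constant (the paper writes it as $\lambda(\bv)>\rho|\bv|/(2L)$ for all $\lambda\in\mathcal{F}(\fc)$, which is the same set by Lemma \ref{b_doesnt_matter}). In (i)$\Rightarrow$(ii) you use compactness of the closure of the differences where the paper rescales to the unit sphere, which amounts to the same thing; two cosmetic points: the first inequality in your convexity check should be strict, and your closure bound should be read inside $\overline{\fc}$, as your following sentence already indicates.
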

    \begin{proof}
        To show {\ref{stosc1} $\Rightarrow$ \ref{stosc2},} let ${\fa_1}$ be the unit sphere in $\fa$ with respect to the norm $| \cdot |$. The set $\overline{\fs} \,\cap\, {\fa_1} \subset \fc$ is closed, and so is the set $\partial \fc$. Since these sets are disjoint, the distance between them is positive. Let ${\varepsilon}: = \inf\limits_{k \in \N} |\bv_k - \bv_{k-1}|$; {it is positive since $\mathscr{V}$ is a quasi-ray}, and clearly $\rho_\fc(\mathscr{V})\geq 
        {\varepsilon \dist(\overline{\fs} \cap {\fa_1}, \partial \fc)}
        $.
        
         The implication {\ref{stosc2} $\Rightarrow$ \ref{stosc3}} is obvious. And finally, 
           {let  \eq{defL}{L :=  \sup_{k\in\N}|\bv_k - \bv_{k-1}|;}
           then it is clear that the cone $$\fs = \left\{ \bv \in \fa: \lambda(\bv) > {\rho_\fc(\mathscr{V})} | \bv |{/ (2L)} \  \text{for all} \,\, \lambda \in \mathcal{F}(\fc)  \right\}$$ 
satisfies $\overline{\fs} \subset \fc \cup \{ 0 \}$, and for any $k\in N$ and $\lambda \in \mathcal{F}(\fc)$ one has
$$|\bv_k - \bv_{k-1}| \geq \lambda(\bv_k - \bv_{k-1}) \ge \rho_\fc(\mathscr{V}) \ge \rho_\fc(\mathscr{V}) |\bv_k - \bv_{k-1}|/L,
$$
hence $\bv_k - \bv_{k-1}\in \fs \subset \overline{\fs}$ and the norms of $\bv_k - \bv_{k-1}$ are uniformly bounded from below. Thus {$\mathscr{V}$}
            is a quasi-ray in $\fs$, which proves   the implication {\ref{stosc3} $\Rightarrow$ \ref{stosc1}}.}
    \end{proof}
\begin{corollary}\label{unbdd}Let $\mathscr{V}$ be a quasi-ray in a convex cone $\fc\subsetneq\fa$ with $\rho_\fc(\mathscr{V})>0$. Then  $\mathscr{V}$ is unbounded.
\end{corollary}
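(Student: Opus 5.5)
The plan is to use the concavity property \equ{convexity} of $\lfloor\cdot\rfloor_\fc$ to show that the quasi-ray moves away from the walls of $\fc$ at a linear rate, and then compare distance to the boundary with the norm.

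Write $\bv_k = \sum_{i=1}^k (\bv_i - \bv_{i-1})$. Each difference lies in $\overline{\fc}$. Since $\rho_\fc(\mathscr{V})>0$, each difference in fact satisfies $\lambda(\bv_i-\bv_{i-1})\ge\rho_\fc(\mathscr{V})>0$ for all $\lambda\in\mathcal F(\fc)$. By \equ{e_c_defines_cone} this places every difference in $\fc$ itself, so \equ{convexity} applies.

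Rather than invoke \equ{convexity} directly, I would work with the linear functionals, which is cleaner. Pick any $\lambda\in\mathcal F(\fc)$; this set is nonempty because $\fc\subsetneq\fa$ is a proper convex cone, and for an open nonempty $\fc$ the representation \equ{e_c_defines_cone} forces this. (If $\fc$ is not assumed open, replace it by its interior, or use a supporting functional.) By linearity,
$$\lambda(\bv_k)=\sum_{i=1}^k\lambda(\bv_i-\bv_{i-1})\ge k\,\rho_\fc(\mathscr{V}).$$
Since $\lambda\in\fa_1^*$ has norm one, $|\bv_k|\ge\lambda(\bv_k)\ge k\rho_\fc(\mathscr{V})\to\infty$, so $\mathscr{V}$ is unbounded. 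Equivalently, \equ{convexity} gives $\lfloor\bv_k\rfloor_\fc\ge k\rho_\fc(\mathscr{V})$. Because $0\in\partial\fc$ for a proper cone, $|\bv_k|\ge\dist(\bv_k,\partial\fc)$, which yields the same conclusion.

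The only delicate point is the degenerate case where $\mathcal F(\fc)$ might be empty, or where $\rho_\fc$ is defined via distances for a non-open cone. I would handle it by noting that $\rho_\fc(\mathscr{V})>0$ forces the differences into the interior. Then $\mathscr V$ is a quasi-ray in some open $\fs$ by Lemma \ref{from_s_to_sc}, and the argument above applies to the cone $\fs$ (or to the interior of $\fc$), where $0\in\partial\fs$ and the functionals are available.
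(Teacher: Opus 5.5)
Your argument is correct and is essentially the paper's: the paper sums the increments using the concavity \equ{convexity} to get $\lfloor \bv_k\rfloor_{\fc}\ge k\,\rho_\fc(\mathscr{V})$, and then uses $|\bv_k|\ge\lfloor\bv_k\rfloor_{\fc}$, which is exactly the alternative you mention after your main computation. Your main version fixes a single $\lambda\in\mathcal{F}(\fc)$ and uses linearity together with $|\lambda(\bv_k)|\le|\bv_k|$; this is the same idea with concavity replaced by plain linearity, and it has the small advantage of not needing the increments to lie in $\fc$ at all.
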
 
\begin{proof}It follows from the above lemma and \equ{convexity}  that $|\bv_k|$ is not less than
\eq{lingrowth}{ 
\lfloor\bv_k\rfloor_{\fc} = \left\lfloor\sum_{i=1}^k(\bv_i - \bv_{i-1})\right\rfloor_{\fc} \ge \sum_{i=1}^k\left\lfloor\bv_i - \bv_{i-1}\right\rfloor_{\fc} \ge k  \rho_\fc(\mathscr{V}).
}
 \end{proof}
 Note that the assumption  $\rho_\fc(\mathscr{V})>0$ is necessary for the conclusion: indeed,  the sequence $ \big((1,1),(-1,1),(1,1), (-1,1),\dots\big)$  is a quasi-ray in $\{(x,y): y> 0\}\subset \R^2$.}
 \smallskip
 
{We close the section by describing a procedure of "thinning" a quasi-ray to increase the distance-from-walls parameter $\rho_\fc(\mathscr{V})$. 
\begin{proposition}\label{subray}Let $\mathscr{V}$ be a quasi-ray in a convex cone $\fc\subsetneq\fa$ with $\rho_\fc(\mathscr{V})>0$. Then  for any $t>0$ there exists   $\mathscr{V}'\subset \mathscr{V}$ with 
$\rho_\fc(\mathscr{V}')\ge t$   such that $\mathscr{V}$ is contained in a bounded thickening of $ \mathscr{V}'$.\end{proposition}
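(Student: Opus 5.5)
Fix $t>0$ and $\mathscr{V}=(\bv_k)$, and put $\rho=\rho_\fc(\mathscr{V})>0$ and $L=\sup_k|\bv_k-\bv_{k-1}|<\infty$ as in \equ{defL}. Choose an integer $N\ge t/\rho$, and take $\mathscr{V}'$ to be the subsequence $\bv'_j:=\bv_{jN}$, $j\in\N$, with $\bv'_0=\bv_0=0$. Each new difference is a sum of $N$ consecutive old ones:
$$\bv'_j-\bv'_{j-1}=\sum_{i=(j-1)N+1}^{jN}(\bv_i-\bv_{i-1}).$$
All old differences lie in $\fc$; if they only lie in $\overline{\fc}$, one works with $\lfloor\cdot\rfloor_\fc$ expressed as $\inf_{\lambda\in\mathcal F(\fc)}\lambda(\cdot)$, which is still superadditive.

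By the concavity property \equ{convexity}, or directly from the formula $\rho_\fc=\inf_{\lambda,k}\lambda(\bv_k-\bv_{k-1})$ together with linearity of each $\lambda\in\mathcal F(\fc)$, every $\lambda\in\mathcal F(\fc)$ satisfies
$$\lambda(\bv'_j-\bv'_{j-1})\ge N\rho\ge t.$$
Hence $\rho_\fc(\mathscr{V}')\ge t$.

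Next we check that $\mathscr{V}'$ is again a quasi-ray in $\fc$. Its differences lie in $\fc$ because $\fc$ is a convex cone and is therefore closed under addition. Their norms are at most $NL$. They are bounded below by $t$, since $|\bv|\ge\lambda(\bv)$ for every unit functional $\lambda$. This is essentially the verification of Lemma~\ref{from_s_to_sc}\ref{stosc3}$\Rightarrow$\ref{stosc2}.

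For the thickening property, take any $\bv_k$ and write $k=jN+r$ with $0\le r<N$. Then
$$|\bv_k-\bv'_j|\le\sum_{i=jN+1}^{k}|\bv_i-\bv_{i-1}|\le (N-1)L,$$
so $\mathscr{V}$ lies in the $NL$-neighborhood of $\mathscr{V}'\cup\{0\}$. The terms $\bv_k$ with $k<N$ are within $NL$ of $0$, and they are also within $2NL$ of $\bv'_1$, so we need not add $0$ to $\mathscr{V}'$.

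There is no real obstacle here; the only points to watch are that the lower bound survives summation, which is exactly superadditivity \equ{convexity}, and the bookkeeping of the initial segment. If one also wants the group-level statement, that $\exp\mathscr{V}$ lies in a bounded thickening of $\exp\mathscr{V}'$, it follows because $\exp:\fa\to A$ is an isometry.
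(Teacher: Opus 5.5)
Your proof is correct and rests on the same idea as the paper's. Both thin the quasi-ray so that retained indices are at least $t/\rho$ apart, use superadditivity \equ{convexity} (equivalently, linearity of each $\lambda\in\mathcal F(\fc)$) to get $\lfloor\bv'_j-\bv'_{j-1}\rfloor_\fc\ge t$, and bound the thickening by $L$ times the index gap. The only difference is the selection rule: the paper takes $k_i$ to be the first index with $|\bv_k-\bv_{k_{i-1}}|\ge Lt/\rho$, which needs Corollary~\ref{unbdd} for existence and yields $k_i-k_{i-1}\ge t/\rho$ indirectly, whereas your fixed stride $N\ge t/\rho$ gives the index gap and the norm bounds on the new differences directly and is slightly simpler.
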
 
\begin{proof} 
{For $ \mathscr{V} = (\bv_k)$, let  $L$ {be as in \equ{defL}, 
and put} $\rho = \rho_\fc(\mathscr{V})$.} 
Fix $t > 0$. We will construct $ \mathscr{V}' = (\bv_{k_i})_{i\in\N}$, inductively defining the sequence $({k}_i)$ of 
integers as follows: put $k_0 = 0$, and, assuming $k_{i-1}$ is defined, let $$k_i := \min\big\{k: |\bv_{k} - \bv_{k_{i-1}}| \geq   L t/\rho\big\};$$ it exists in view of Corollary \ref{unbdd}. 
Then $Lt/\rho \leq |\bv_{k_i}- \bv_{k_{i-1}}| \leq Lt/\rho + L$ for every $i\in\N$. It is also easy to see that 
$k_i - k_{i-1} \geq t/\rho$; 
therefore a computation similar to
\equ{lingrowth} yields that $\lfloor \bv_{k_i} - \bv_{k_{i-1}} \rfloor_{\fc} \geq  t$, i.e.\ $ \rho_\fc(\mathscr{V}') \geq  t$. 
Also by construction for any $k$ with $k_{i-1} < k \le {k_i}$ we have $$|\bv_{k} - \bv_{k_{i}}| \le |\bv_{k} - \bv_{k_{i-1}}| + |\bv_{k_i} - \bv_{k_{i-1}}| <   L t/\rho +  (L t/\rho + L),$$ hence $\mathscr{V}$ is contained in the $L(\frac{2 t}\rho + 1)$-neighborhood of $\mathscr{V}'$.
 \end{proof}}

\section{Proof of Theorem \ref{main}}\label{mainproof}

\subsection{\texorpdfstring{A passage from $F$ to $\mathscr{V}$}%
  {A passage from F to V}}
{Our proof of Theorem \ref{main}} is built on the  application of equidistribution of expanding translates of horospherical subgroups to Hausdorff dimension estimates of exceptional sets, an approach that goes back to \cite{KM}. However this is the first time, to the best of the authors' knowledge, when {these ideas are} applied to actions of sets more general than one-parameter semigroups.

\smallskip

In what follows "$\dim$" will stand for \hd. {We  claim that  in order to prove   Theorem \ref{main} it is enough to show that the following holds:
\begin{theorem}\label{reduction} For any $\varepsilon > 0$, any $x\in X$ and any non-empty open $W\subset U$  there exists a 
     bounded   $K\subset X$ and $t > 0$ such that  whenever $\mathscr{V}$ is a quasi-ray in ${\fs}_\fu^+$ with $\rho_{\fs_{\fu}^+}(\mathscr{V}) \ge t$, it holds that
     \eq{goal}{\dim\left(\big\{u\in W:  \exp(\mathscr{V})ux \subset K
\big\}\right) \ge \dim U - \varepsilon.}
\end{theorem}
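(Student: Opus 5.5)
Theorem \ref{reduction} will be proved by the Cantor-set construction of \cite{KM}. Kleinbock and Margulis use the powers $g_{kt}$ of a single element; here the successive elements $\exp(\bv_{k}-\bv_{k-1})$ vary along the quasi-ray. Since $X$ is not assumed compact, I will fix a compact set $Q\subset X$ of large measure and an open set $K$ with $\overline{Q}\subset K$, $\overline K$ compact, and $\mu_X(X\smallsetminus Q)$ small. The construction uses a fixed small box (a "tessellation" cell) $V\subset U$ around $e$, of size $r$ comparable to the injectivity radius on $K$. The aim is a self-similar family of boxes in $W$ such that at stage $k$ every surviving box $B$ satisfies $\exp(\bv_j)B x\subset K$ for all $j\le k$.

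\textbf{Step 1: one-step counting estimate.} Let $a=\exp(\bv)$ with $\bv\in\fs_\fu^+$ and $\lfloor \bv\rfloor_{\fs_\fu^+}\ge t$, and let $y\in Q$. Apply Corollary \ref{cor:effective} with $\varphi$ a smooth bump approximating $1_V$ and $\psi$ approximating $1_{X\smallsetminus Q}$. This gives
\[
\mu_U\big(\{u\in V: aux\notin Q\}\big)\le \big(\mu_X(X\smallsetminus Q)+Me^{-\zeta t}\big)\mu_U(V)+(\text{boundary error}).
\]
Since $\bv\in\fe_\fu^+$, conjugation by $a$ expands every direction of $\fu$. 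Hence $V$ splits into $N(a)\asymp \mu_U(aVa^{-1})/\mu_U(V)$ subboxes, each of the form $a^{-1}V'a\cdot u$ with $V'$ a translate of $V$. Using the equidistribution bound together with a thickening argument (each subbox is mapped by $a$ onto a set of size $r$), all but a proportion $\delta:=2\mu_X(X\smallsetminus Q)+2Me^{-\zeta t}$ of the subboxes $D$ satisfy $aDy\subset K$. Moreover the surviving subboxes have $a u y$ in a point of $Q$ for some $u\in D$, which is what allows the step to be iterated. Uniformity in $y\in Q$ comes from $M=M(Q,\varphi,\psi)$.

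\textbf{Step 2: iteration along the quasi-ray.} Write $a_k=\exp(\bv_k-\bv_{k-1})$, so that $\exp(\bv_k)=a_k\cdots a_1$ because $\fa$ is abelian. At stage $k$ each surviving box has the form $\exp(-\bv_k)V\exp(\bv_k)u$. Right-invariance of the metric and the fact that $A$ normalizes $U$ let me rescale by conjugation and apply Step 1 at the point $y=\exp(\bv_k)ux$, using $a=a_{k+1}$ with $\lfloor\bv_{k+1}-\bv_k\rfloor\ge t$. The quasi-ray hypothesis gives $|\bv_{k+1}-\bv_k|\le L$, so the number $N_k$ of subboxes per step is bounded above by $N_{\max}=N_{\max}(L)$. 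The expansion rates of the boxes are bounded by $e^{\max_\lambda\lambda(\bv_{k+1}-\bv_k)}$ in each root direction and are comparable across directions up to constants depending on $L$. Each step therefore keeps at least $(1-\delta)N_k$ of $N_k$ subboxes. I then invoke the standard Hausdorff-dimension lemma for such Cantor sets (cf.\ \cite{KM}, or McMullen/Urbański type bounds). This gives dimension at least $\dim U - C\,\frac{\log(1/(1-\delta))}{\log \min_k N_k}$ up to an eccentricity correction. Making $t$ large makes $\delta$ small, and by Proposition \ref{subray} one may also assume $N_k\ge e^{ct}$, which drives the loss below $\varepsilon$.

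\textbf{Main obstacle.} The hardest part is the non-conformality of the boxes. Different root spaces $\fh_\lambda$ are expanded at different rates $e^{\lambda(\bv_{k})}$, and these rates vary from step to step, so the stage-$k$ boxes are eccentric parallelepipeds rather than balls. I would handle this as in \cite{KM}. First, note that the ratios of expansion rates at one step are bounded because $|\bv_k-\bv_{k-1}|\le L$ and all $\lambda(\bv_k-\bv_{k-1})\ge \rho$. Then use a mass-distribution estimate in which every ball of radius comparable to the smallest side is compared with the sets of the construction. Because all steps lie in the fixed compact set of increments $\{\bv\in\overline{\fs}:\ \rho\le\lfloor\bv\rfloor,\ |\bv|\le L\}$, the error is a constant times $1/t$ in the exponent, and it vanishes as $t\to\infty$. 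The open set $W$ is handled by starting the construction inside a box contained in $W$, and $x$ by taking $Q$ to contain a neighbourhood of $\overline{W}x$.
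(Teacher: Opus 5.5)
Your architecture is the paper's. Step 1 is Propositions \ref{thm22_cor} and \ref{number_of_tiles_bound}: equidistribution applied to the tiles of a tessellation domain, followed by a thickening argument. Step 2 is the tree-like construction with sets $\Omega_{\bv_k}^{-1}(\overline{B})u$ and base point $\exp(\bv_k)ux$.

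The gap is in the dimension step, which you yourself flag as the main obstacle. Theorem \ref{reduction} fixes $K$ and $t$ before $\mathscr{V}$, and the conclusion must hold for every quasi-ray with $\rho_{\fs_\fu^+}(\mathscr{V})\ge t$. There is no control on the upper bound $L$ for $|\bv_k-\bv_{k-1}|$, nor on how unevenly the increments expand the different root spaces. Your bound runs through $N_{\max}(L)$, $\log\min_k N_k$ and an ``eccentricity correction'' whose constant comes from the compact set of increments $\{\bv:\rho\le\lfloor\bv\rfloor,\ |\bv|\le L\}$. The resulting $t$ therefore depends on $\mathscr{V}$, so the statement is not proved with the required quantifiers. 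Separately, you cannot appeal to Proposition \ref{subray} inside this proof: $\mathscr{V}$ is given, and the conclusion concerns all of $\exp(\mathscr{V})ux$. You do not need it anyway, since $J_{\bv_{k+1}-\bv_k}\ge e^{c\dim(U)t}$ already follows from $\rho\ge t$ and Lemma \ref{contraction bound lemma}(a).

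The missing observation is that non-conformality is no obstacle at all. The McMullen--Urbanski bound, Theorem \ref{thm: urbanski}, involves only the Lebesgue densities $\Delta_i$ and the maximal diameters $d_k$. Its proof compares balls of radius about $d_k$ (the largest side) with the stage-$(k+1)$ sets they meet, so the shape of the sets never enters. Comparing balls at the smallest-side scale with the stage-$k$ sets they meet would in fact lose an amount of dimension governed by the eccentricity, not $O(1/t)$.

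Since $\lfloor\bv_{i+1}-\bv_i\rfloor_{\fe_\fu^+}\ge\lfloor\bv_{i+1}-\bv_i\rfloor_{\fs_\fu^+}\ge t$, Lemma \ref{contraction bound lemma}(b) gives $d_i\le\kappa^i e^{-ict}r$ with no dependence on $L$. Combine this with any density bound independent of $\mathscr{V}$. Your $1-\delta$ works, as does the paper's $\Delta_i\ge\mu_X(K)^2$, for which $\delta$ need not even be small. The result is $\dim\ge\dim U-\frac{2\log\mu_X(K)}{\log\kappa-ct}$, uniformly in $\mathscr{V}$.

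Two smaller repairs are needed in Step 1. First, the base point for the next step is the anchor $\exp(\bv_{k+1})u'x$. This lies in $K$ but not necessarily in $Q$, so the uniformity must be over a compact set of base points containing it. The paper handles this by using one compact $K$ for base points and a smaller $K'$ with $B_rB_r^{-1}K'\subset K$ as the equidistribution target. Second, $1_{X\smallsetminus Q}$ is not compactly supported, so approximate $1_Q$ from below and pass to complements.
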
}
\begin{proof}[Proof of Theorem \ref{main} assuming Theorem \ref{reduction}]
{Recall that in  Theorem \ref{main} we are given a quasi-ray $F\subset G$ relative to $\fs$, where  $\fs$ is a  cone in $\fa$ such that $\overline{\fs} \subset {{\fs}_\fu^+} \cup \{ 0 \}$. 
Also recall that by definition  $F$ belongs to a bounded thickening of $\exp(\mathscr{V})$, where $\mathscr{V}$ is a quasi-ray in $\fs$;
the latter, in view of Lemma~\ref{from_s_to_sc}, amounts to saying that $\mathscr{V}$ is a quasi-ray in ${\fs}_\fu^+$ such that $\rho_{\fs_{\fu}^+}(\mathscr{V}) > 0$.
Further, using Proposition \ref{subray} one can assume that  $\rho_{\fs_{\fu}^+}(\mathscr{V})\ge t$ for any given $t > 0$ and still have $F$ contained in a bounded thickening of $\exp(\mathscr{V})$. This way one has $ux\in {\mathcal B}(F)$   whenever $\exp(\mathscr{V})ux \subset K$ for a bounded $K\subset X$; hence  the dimension estimate  \equ{goal} for any $\varepsilon,x$ and $W$ implies the thickness of the set \equ{conclU}.}

\smallskip
It remains to prove that the set ${\mathcal B}(F)$ {itself} is thick, {which is done by a standard fibering argument.} {Let us choose a  linear complement $\fv$ in $\fg$, that is, a subspace of $\fg$ with $\fg = \fu \oplus \fv$, and  a sufficiently small neighborhood $V_0 \subset \fv$ of $0$. Put
$O_V := \exp(V_0)$.
Then the map
$$\Psi: U \times O_V \rightarrow G,\ 
(u, g) \mapsto ug$$
has invertible differential at $(e, e)$. {Now take an arbitrary element $x \in X$. Then,  after shrinking $O_V$ and choosing a small neighborhood $O_U$ of $e\in U$, it follows that the projection map
$O_U \times O_V \rightarrow X$, $(u,g)\mapsto ugx$, is locally a bi-Lipschitz diffeomorphism.}
Let $$D = \left\{ (u,g) \in O_U \times O_V: ugx \in {\mathcal B}(F) \right\}.$$ Since Hausdorff dimension is preserved under bi-Lipschitz maps, it is enough to prove that {$D$ has full \hd. 
By the first part of the theorem,} 
for any $g \in O_V$,
the set 
$$
D \cap \left( O_U \times \{ g \} \right) = \{u\in 
O_U: ugx \in {\mathcal B}(F)\}
$$
has Hausdorff dimension equal to $\dim U$. Therefore, one can apply Marstrand's slicing lemma, e.g.\ {in the form of} \cite[Lemma 1.4(a)]{KM}, to conclude that
$$
{\dim D \geq \dim O_V + \dim U = \dim G = \dim  X}.
$$
This completes the proof of Theorem \ref{main} {modulo Theorem \ref{reduction}}}.
\end{proof}

\subsection{Equidistribution and return to {bounded} sets}

{As a preparation for} the proof {of Theorem \ref{reduction}, we} apply the equidistribution result from the previous section (Corollary \ref{cor:effective}) to characteristic functions of bounded subsets of $U$ and  $X$.

\begin{proposition}\label{thm22_cor}
    Let $B$ be a bounded subset of $U$ with $\mu_U(B) > 0$ and $\mu_U(\partial B) = 0$,   and let 
    $K$ be a bounded subset of $X$ with $\mu_X(K) > 0$ and $\mu_X(\partial K) = 0$. Also let ${Q}$ be a compact subset of $X$.
    Then for any $\delta > 0$ there exists ${t_1} = {t_1}(B,K, Q,\delta) > 0$, such that for any $x \in {Q}$ and any $\bv \in \fs_{\fu}^+$ with $\lfloor \bv\rfloor_{\fs_{\fu}^+} \geq {t_1}$  one has
    $$
    \left| \frac{\mu_U\left( \{ u \in B: \exp(\bv) ux \in K\} \right)}{\mu_U(B)} -  \mu_X(K)\right| < \delta.
    $$
\end{proposition}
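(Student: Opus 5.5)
The plan is the standard sandwich argument: approximate the indicator functions $1_B$ and $1_K$ from above and below by smooth compactly supported functions, and apply Corollary \ref{cor:effective} to each product of approximants. Note that $\lfloor \bv\rfloor_{\fs_\fu^+}$ is by definition $\lfloor \exp\bv\rfloor_{S_U^+}$, since the exponential map is an isometry. Hence the right-hand side of \equ{equidistr} becomes $Me^{-\zeta\lfloor\bv\rfloor_{\fs_\fu^+}}$, which is small once $\lfloor\bv\rfloor_{\fs_\fu^+}$ is large.

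\emph{Step 1: choosing the approximants.} Fix $\delta>0$ and put $\eta>0$ small, to be chosen in terms of $\delta$, $\mu_U(B)$ and $\mu_X(K)$.
\begin{itemize}
\item Because $\mu_U(\partial B)=0$ and $B$ is bounded, there are $\varphi_-,\varphi_+\in C_c^\infty(U)$ with $0\le\varphi_-\le 1_B\le\varphi_+\le 1$ and $\int(\varphi_+-\varphi_-)\,d\mu_U<\eta$. To get these, take an open set containing $\overline B$ and a compact subset of the interior of $B$, each with measure differing from $\mu_U(B)$ by less than $\eta/2$; this is possible because $\mu_U(\overline B)=\mu_U(B^\circ)=\mu_U(B)$ and Haar measure is regular. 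Then use smooth Urysohn functions.
\item Similarly, since $\mu_X(\partial K)=0$ and $K$ is bounded (so $\overline K$ is compact), there are $\psi_\pm\in C_c^\infty(X)$ with $0\le\psi_-\le 1_K\le\psi_+\le1$ and $\int(\psi_+-\psi_-)\,d\mu_X<\eta$.
\end{itemize}
Smooth functions are available on $X$ since it is a manifold.

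\emph{Step 2: sandwiching the counting integral.} Since all functions are nonnegative, for every $x$ and $\bv$ we have
\[
\int_U\varphi_-(u)\psi_-(\exp(\bv)ux)\,d\mu_U\le \mu_U\bigl(\{u\in B:\exp(\bv)ux\in K\}\bigr)\le\int_U\varphi_+(u)\psi_+(\exp(\bv)ux)\,d\mu_U .
\]
Apply Corollary \ref{cor:effective} to the two pairs $(\varphi_-,\psi_-)$ and $(\varphi_+,\psi_+)$ with the compact set $Q$. This gives constants $M_\pm$, depending only on $Q$, $B$, $K$ and $\eta$ (hence on $\delta$), such that for $x\in Q$ and $\bv\in\fs_\fu^+$ each integral lies within $M_\pm e^{-\zeta\lfloor\bv\rfloor_{\fs_\fu^+}}$ of $\int\varphi_\pm\,d\mu_U\int\psi_\pm\,d\mu_X$.

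\emph{Step 3: comparing the main terms.} The main terms satisfy
\[
\bigl|\textstyle\int\varphi_\pm\int\psi_\pm-\mu_U(B)\mu_X(K)\bigr|\le\eta+\mu_U(B)\eta .
\]
Now choose $t_1$ so that $\max(M_+,M_-)e^{-\zeta t_1}<\eta$. Dividing by $\mu_U(B)>0$, the deviation in the statement is then at most $(2\eta+\mu_U(B)\eta)/\mu_U(B)$, which is less than $\delta$ for $\eta$ small enough.

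The only step needing any care is Step 1, the construction of the smooth approximants from the null-boundary hypothesis; it is routine. The decay in Corollary \ref{cor:effective} is uniform in $x\in Q$ and in $\bv\in\fs_\fu^+$, so $t_1$ does not depend on $x$ or $\bv$.
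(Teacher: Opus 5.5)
Your proposal is correct and follows essentially the same route as the paper. Both arguments sandwich $1_B(u)\,1_K(\exp(\bv)ux)$ between products of smooth compactly supported approximants, apply Corollary~\ref{cor:effective} to both pairs using $\lfloor \exp\bv\rfloor_{S_U^+}=\lfloor\bv\rfloor_{\fs_\fu^+}$, and choose $t_1$ so that $Me^{-\zeta t_1}<\eta$. The only difference is cosmetic: the paper builds the approximants as convolutions $\gamma_\varepsilon*1_{B_\varepsilon^\pm}$ and $\xi_\varepsilon*1_{K_\varepsilon^\pm}$ of indicators of $\varepsilon$-neighborhoods, while you obtain them from regularity of Haar measure and smooth Urysohn functions.
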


\begin{proof} 
{We proceed in a standard way of approximation of characteristic functions of bounded subsets by smooth compactly supported functions.}
Let $\varepsilon > 0$. By $B_{\varepsilon}^+$ we will denote the $\varepsilon$-neighborhood of  {$B$ in $U$}, that is, $B_{\varepsilon}^+ = \{ u \in U: \dist(u,B) < \varepsilon \}$. Let us also put  $$B_{\varepsilon}^-: = U \smallsetminus \left( U \smallsetminus B \right)^+_{\varepsilon} = \{ u \in B: \dist(u, \partial B) \geq \varepsilon \}.$$ We define the {subsets} $K_{\varepsilon}^+$ and $K_{\varepsilon}^-$ {of $X$} analogously.

Fix $0 <\eta <1$, and define $\varepsilon = \varepsilon(\eta)$ in such a way that 
\begin{equation}\label{nu1}
\mu_U(B) - \mu_U(B_{2 \varepsilon}^-) < \eta, \ \mu_U(B_{2 \varepsilon}^+) - \mu_U(B)  < \eta
\end{equation}
and
\begin{equation}\label{nu2}
\mu_X(K) - \mu_X(K_{2 \varepsilon}^-) < \eta \ , \ \mu_X(K_{2 \varepsilon}^+) - \mu_X(K)  < \eta.
\end{equation}
{This can be done since $\mu_U(\partial B) = \mu_X(\partial K) = 0$.}
{Then  let $\gamma_{\varepsilon} \in C_c^{\infty}(U)$ and $\xi_{\varepsilon} \in C_c^{\infty}(G)$ be   nonnegative smooth functions whose supports are contained inside the $\varepsilon$-balls in $U$ (resp., $G$) around identity {and} such that $\int\limits_U \gamma_{\varepsilon} \,d \mu_U = \int\limits_G \xi_{\varepsilon} \,d \mu_G =1$.} 
It is known that such functions exist (see for example \cite[Lemma 2.4.7(b)]{KM}). 

Define functions $\varphi_{\varepsilon}^\pm: = \gamma_{\varepsilon} * 1_{B_{\varepsilon}^\pm} \in C_c^{\infty}(U)$ and $\psi_{\varepsilon}^\pm: = \xi_{\varepsilon} * 1_{K_{\varepsilon}^\pm} \in C_c^{\infty}(X)$. Here $f*g$ denotes the convolution of functions $f$ and $g$. It is easy to verify that:
\begin{itemize}
    \item $\supp(\varphi_{\varepsilon}^-) \subset {B}$, and if $u \in B_{2 \varepsilon}^-$, then $\varphi_{\varepsilon}^-(u) =1$;
    \item $\supp(\varphi_{\varepsilon}^+) \subset {B_{2 \varepsilon}^+}$, and if $u \in B$, then $\varphi_{\varepsilon}^+(u) =1$;
    \item $\supp(\psi_{\varepsilon}^-) \subset K$, and if $x \in K_{2 \varepsilon}^-$, then $\psi_{\varepsilon}^-(x) =1$;
    \item $\supp(\psi_{\varepsilon}^+) \subset {K_{2 \varepsilon}^+}$, and if $x \in K$, then $\psi_{\varepsilon}^+(x) =1$.
\end{itemize}

In particular, one has 
\begin{equation}\label{ineq_b}
1_{B_{2 \varepsilon}^-}(u) \leq \varphi_{\varepsilon}^-(u) \leq 1_B(u) \leq \varphi_{\varepsilon}^+(u) \leq 1_{B_{2 \varepsilon}^+}(u) \ \text{for any} \ u \in U
\end{equation} 
and 
\begin{equation}\label{ineq_k}
1_{K_{2 \varepsilon}^-}(x) \leq \psi_{\varepsilon}^-(x) \leq 1_K(x) \leq \psi_{\varepsilon}^+(x) \leq 1_{K_{2 \varepsilon}^+}(x) \ \text{for any} \ x \in X.
\end{equation}

Therefore, 

\begin{equation}\label{central}
\begin{aligned}\int\limits_U \varphi_{\varepsilon}^-(u) \psi_{\varepsilon}^-(\exp(\bv)ux) \,d \mu_U &\leq \mu_U\left( \{ u \in B: \exp(\bv) ux \in K\} \right) \\ &\leq \int\limits_U \varphi_{\varepsilon}^+(u) \psi_{\varepsilon}^+(\exp(\bv)ux) \,d \mu_U.\end{aligned}
\end{equation}

Now let us choose $M = \max \big( M({Q}, \varphi_{\varepsilon}^-, \psi_{\varepsilon}^-), M({Q}, \varphi_{\varepsilon}^+, \psi_{\varepsilon}^+)\big)$ where $M({Q}, \varphi, \psi)$ is defined in Corollary \ref{cor:effective}. Let $\zeta$ be as in Corollary \ref{cor:effective}, and pick $t_1 = t_1(B, K, {Q}, \eta)$ such that $M e^{-\zeta t_1} < \eta$. By Lemma \ref{b_doesnt_matter}, $\lfloor \exp(\bv)\rfloor_{S_U^+} = \lfloor \bv \rfloor_{\fs_\fu^+}$; thus, if $\lfloor \bv \rfloor_{\fs_\fu^+} \geq t_1$, by Corollary \ref{cor:effective} one gets
\begin{equation}\label{ineq-}
     \int\limits_U \varphi_{\varepsilon}^-(u) \psi_{\varepsilon}^-(\exp(\bv)ux) \,d \mu_U \geq  \int\limits_U \varphi_{\varepsilon}^- \,d \mu_U \int\limits_X \psi_{\varepsilon}^- \,d \mu_X    - \eta 
\end{equation}
and
\begin{equation}\label{ineq+}
     \int\limits_U \varphi_{\varepsilon}^+(u) \psi_{\varepsilon}^+(\exp(\bv)ux) \,d \mu_U \leq  \int\limits_U \varphi_{\varepsilon}^+ \,d \mu_U \int\limits_X \psi_{\varepsilon}^+ \,d \mu_X    + \eta.
\end{equation}
It follows from \eqref{ineq_b}, \eqref{ineq_k}, \eqref{nu1} and \eqref{nu2} that 
\begin{equation}\label{3eta-}
\begin{aligned}\int\limits_U \varphi_{\varepsilon}^- \,d \mu_U \int\limits_X \psi_{\varepsilon}^- \,d \mu_X    - \eta &\geq \left(\mu_U(B) - \eta \right) \left(\mu_X(K) - \eta \right) - \eta\\ & \geq \mu_U(B) \mu_X(K) - {\left( \mu_U(B) + 3 \right)} \eta
\end{aligned}\end{equation}
and
\begin{equation}\label{3eta+}
\begin{aligned}\int\limits_U \varphi_{\varepsilon}^+ \,d \mu_U \int\limits_X \psi_{\varepsilon}^+ \,d \mu_X    + \eta &\leq \left(\mu_U(B) + \eta \right) \left(\mu_X(K) + \eta \right) + \eta \\ &\leq \mu_U(B) \mu_X(K) + {\left( \mu_U(B) + 3 \right)} \eta.
\end{aligned}\end{equation}
Combining \eqref{central}, \eqref{ineq-}, \eqref{ineq+}, \eqref{3eta-} and \eqref{3eta+}, we conclude that 
$$
\begin{aligned} \mu_U(B) \mu_X(K) - {\left( \mu_U(B) + 3 \right)} \eta  & \leq \mu_U\left( \{ u \in B: \exp(\bv) ux \in K\} \right) \\ &\leq \mu_U(B) \mu_X(K) + {\left( \mu_U(B) + 3 \right)} \eta, \end{aligned}
$$
and thus
$$
    \left| \frac{\mu_U\left( \{ u \in B: \exp(\bv) ux \in K\} \right)}{\mu_U(B)} -  \mu_X(K)\right| < \frac{{\left( \mu_U(B) + 3 \right)} \eta}{\mu_U(B)}.
$$
Choosing $\eta \leq \frac{\delta \mu_U(B)}{{ \mu_U(B) + 3}}$ completes the proof.
\end{proof}

\subsection{\texorpdfstring{Metric properties of the conjugation by $\exp(\bv)$}%
  {Metric properties of the conjugation by exp(v)}}

For $\bv\in\fa$, let $\Omega_{\bv}: {H \rightarrow H}$ be the conjugation automorphism defined via $\Omega_{\bv}(g) = \exp(\bv) g \exp(-\bv)$. 
{Recall that the geometrically expanding cone $\fe^+_\fu$ is defined by the condition that $\Omega_{\bv}$ expand the metric on $U$.  
More precisely, let us denote by $\ell_{\bv}$ the Lipschitz constant of $\Omega^{-1}_{\bv}|_U$:
$$
    {\ell_{\bv}: = \sup\limits_{\substack{u_1, u_2 \in U \\ u_1 \neq u_2}}} \frac{\dist\big(\Omega_{\bv}^{-1}(u_1), \Omega_{\bv}^{-1}(u_2)\big)}{\dist(u_1,u_2)}, 
    $$
    and by $J_{\bv}$ the Jacobian of $\Omega_{\bv}|_U$:
    $$
    J_{\bv}: = \frac{\mu_U\big(\Omega_{\bv} ( B  )\big)}{\mu_U( B )}, 
    $$
    where the right hand side is independent of the choice of $B\subset U$ with $0 < \mu_U( B )< \infty$.}

\begin{lemma}\label{contraction bound lemma} {Let $c := 
    \min_{\lambda \in \Phi_{\fu}} |\lambda|$. Then: 
    \begin{enumerate}[label= \rm (\alph*)]
        \item\label{lem33_a} for any $\bv \in \fe_{\fu}^+$ one has $J_{\bv} \ge \exp\left(c \dim(U) \lfloor \bv\rfloor_{\fe_{\fu}^+}\right)$;
     \item\label{lem33_b} 
     there exists $\kappa >0$ (depending only on the choice of the  metric "$\dist$" on $U$)  such that for any $\bv \in \fe_{\fu}^+$ 
one has
    $$
    \ell_{\bv} \le \kappa \exp\left(-c \lfloor \bv\rfloor_{\fe_{\fu}^+}\right). 
    $$
    \end{enumerate}}
\end{lemma}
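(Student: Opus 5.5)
The plan is to reduce both statements to linear algebra on $\fu$ via the exponential map, using the root decomposition \equ{rootdecomp} and the description of $\lfloor\cdot\rfloor_{\fe_\fu^+}$ from Lemma \ref{b_doesnt_matter}.

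\emph{The key inequality.} By \eqref{eu}, every $\lambda\in\Phi_\fu$ is positive on $\fe_\fu^+$. Hence $\lambda/|\lambda|\in\mathcal F(\fe_\fu^+)$, and by \eqref{eq_through_lambda0} (or directly \eqref{ineq_lambda_dist}) we get
\[
\lambda(\bv)\ \ge\ |\lambda|\,\lfloor \bv\rfloor_{\fe_\fu^+}\ \ge\ c\,\lfloor \bv\rfloor_{\fe_\fu^+}
\qquad\text{for all }\lambda\in\Phi_\fu,\ \bv\in\fe_\fu^+ .
\]
Both parts will follow from this once $\Omega_\bv$ is linearized.

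\emph{Part \ref{lem33_a}.} Since $U$ is a connected simply connected nilpotent group, $\exp:\fu\to U$ is a diffeomorphism. It carries Lebesgue measure on $\fu$ to a Haar measure on $U$, because the Jacobian of $\exp$ is unipotent, hence has determinant $1$. We also have $\Omega_\bv\circ\exp=\exp\circ\Ad(\exp\bv)|_\fu$. Therefore $J_\bv=\det\big(\Ad(\exp\bv)|_\fu\big)$. By \equ{rootdecomp}, $\Ad(\exp\bv)$ acts on $\fh_\lambda$ as the scalar $e^{\lambda(\bv)}$, so
\[
J_\bv=\exp\Big(\sum_{\lambda\in\Phi_\fu}\dim\fh_\lambda\,\lambda(\bv)\Big).
\]
Since $\sum_\lambda\dim\fh_\lambda=\dim U$, the key inequality gives $J_\bv\ge\exp\big(c\dim(U)\lfloor\bv\rfloor_{\fe_\fu^+}\big)$.

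\emph{Part \ref{lem33_b}.} Take $\dist$ on $U$ to be the right-invariant Riemannian metric determined by the chosen inner product on $\fu$. If the metric restricted from $G$ is intended instead, the two are compared on $U$, and this comparison is where $\kappa$ absorbs constants.
\begin{enumerate}[label=\rm(\roman*)]
\item Let $\gamma$ be a piecewise smooth curve in $U$. Because $\Omega_\bv^{-1}$ is an automorphism, right-translating the velocity of $\Omega_\bv^{-1}\circ\gamma$ back to $e$ gives $\Ad(\exp(-\bv))$ applied to the right-translated velocity of $\gamma$. Hence lengths scale by at most the operator norm $\|\Ad(\exp(-\bv))|_\fu\|$, and so $\ell_\bv\le\|\Ad(\exp(-\bv))|_\fu\|$.
\item Fix an auxiliary inner product on $\fu$ in which the root spaces $\fh_\lambda$, $\lambda\in\Phi_\fu$, are mutually orthogonal. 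In that norm the operator norm equals $\max_{\lambda\in\Phi_\fu}e^{-\lambda(\bv)}$, which by the key inequality is at most $e^{-c\lfloor\bv\rfloor_{\fe_\fu^+}}$.
\item All norms on $\fu$ are equivalent. Passing back to the given norm costs a factor $\kappa$ equal to the product of the two equivalence constants, and these depend only on the chosen metric.
\end{enumerate}

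\emph{Main obstacle.} The root spaces need not be orthogonal for the chosen inner product, and there is some ambiguity about which metric on $U$ is meant. The norm-equivalence step (iii) handles both issues uniformly in $\bv$, which is why only the constant $\kappa$ appears and the exponent $c$ is unaffected.
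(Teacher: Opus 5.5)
Your proof is correct and is essentially the paper's argument. It uses the same three ingredients: the estimate $\lambda(\bv)\ge|\lambda|\,\lfloor\bv\rfloor_{\fe_\fu^+}\ge c\,\lfloor\bv\rfloor_{\fe_\fu^+}$; the Jacobian computed as $\exp\bigl(\sum_{\lambda\in\Phi_\fu}\dim\fh_\lambda\,\lambda(\bv)\bigr)$ from the root decomposition; and the reduction of $\ell_\bv$ to $\|\Ad(\exp(-\bv))|_\fu\|_{\mathrm{op}}$ by right-translating velocities of curves in $U$. Your step (iii) only makes explicit the norm comparison that the paper absorbs into $\kappa$.

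The one thing to delete is your hedge about the metric restricted from $G$. By right-invariance, $|\dist(e,\Omega_\bv^{-1}(u))-\dist(e,u)|\le 2|\bv|$, and $\dist(e,\cdot)$ is unbounded on $U$. Hence for $\dist|_U$ the Lipschitz constant of $\Omega_\bv^{-1}|_U$ is always at least $1$. So that metric is not bi-Lipschitz to the intrinsic one, and part (b) genuinely needs the intrinsic right-invariant Riemannian metric on $U$, which is what both you and the paper actually use.
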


\begin{proof}
Fix $\bv \in \fe_{\fu}^+$ and let $\phi:=\Omega_{\bv}^{-1}|_U$. Since the differential 
 of $\phi$ at the identity can be written as  $${d\phi_e = \operatorname{Ad}\big(\exp(\bv)^{-1}\big)|_{\mathfrak u} = \exp\big(-\operatorname{ad}(\bv)|_{\mathfrak u}\big)},$$ on each root space  $\fh_\lambda$, where $\lambda \in \Phi_\fu$, 
it acts  
via
multiplication by $e^{-\lambda({\bv})}$.  
Moreover, 
{\eq{lambdaestimate}{{\lambda({\bv}) = |\lambda| \dist(\bv, \Ker \lambda) \geq c \dist(\bv, \partial \fe_{\fu}^+) = c \lfloor \bv\rfloor_{\fe_{\fu}^+}\text{ for any }\lambda \in \Phi_{\fu}.}} }
 In view of {the root space decomposition} \equ{rootdecomp},
it
implies that 
{$$\begin{aligned} J_{\bv} =  \det\left(d\phi_e\right)^{-1} = & \exp\left(\tr\operatorname{ad}(\bv)|_{\mathfrak u}\right) \\ =  \exp\left(\sum_{\lambda \in \Phi_{\fu}} \dim ({\fh_\lambda})\lambda({\bv})\right)\underset{\equ{lambdaestimate}}  \ge& \exp\left(c \lfloor \bv\rfloor_{\fe_{\fu}^+}\sum_{\lambda \in \Phi_{\fu}}\dim({\fh_\lambda})\right),\end{aligned}$$
proving \ref{lem33_a}. Similarly one can write}
$$\|d\phi_e\|_{\mathrm{op}} = \|\operatorname{Ad}(\exp(\bv)^{-1})|_{\fu}\|_{\mathrm{op}}
\le \kappa \exp\left(-c \lfloor \bv\rfloor_{\fe_{\fu}^+}\right),$$
where   $\| \cdot \|_{\mathrm{op}}$ is the operator norm,  and $\kappa$ depends only on the choice of the norm on $\fu$.

To prove \ref{lem33_b} it remains to show that
\begin{equation}\label{global_phi_estimate}
\operatorname{dist}\big(\phi(u_1),\phi(u_2)\big)
\le
\|d\phi_e\|_{\mathrm{op}}\operatorname{dist}(u_1,u_2)\text{ for any }u_1, u_2 \in U.
\end{equation}
Indeed, for $u\in U$  let $R_u:U\to U$ denote the right translation by $u$,
$R_u(h)=hu$. Since $\phi$ is an automorphism of $U$, one has
\[
\phi\circ R_u=R_{\phi(u)}\circ \phi.
\]
Differentiating at $e$, we get
\begin{equation}\label{diff_at_e}
d\phi_u\circ (dR_u)_e=(dR_{\phi(u)})_e\circ d\phi_e.
\end{equation}
Since $(dR_u)_e$ is a linear isomorphism between $\fu = T_eU$ and $T_uU$, every vector $v\in T_uU$ can be written uniquely as
$v=(dR_u)_e{w}$ with ${w}\in\mathfrak u=T_eU$. Using \eqref{diff_at_e}, we conclude that
\begin{equation}\label{phi_u_shifted}
d \phi_u(v) = d\phi_u\circ (dR_u)_e({w}) = (dR_{\phi(u)})_e\circ d\phi_e({w}).
\end{equation}
Since the Riemannian
metric on $U$ is right invariant, right translations are isometries on
tangent spaces; in particular, 
$$
|v|_u = |{w}| \ \ \ \ \text{and} \ \ \ \ |d \phi_u(v)|_{\phi(u)} = |d\phi_e({w})|,
$$
where the second equality follows from \eqref{phi_u_shifted}. Here we use the notation $| \cdot |_u$ for the norm on $T_u U$. Putting everything together, we get
$$
|d\phi_u(v)|_{\phi(u)} = |d\phi_e({w})| \leq \|d\phi_e\|_{\mathrm{op}}|{w}| = \|d\phi_e\|_{\mathrm{op}}|v|_u.
$$
Thus $\|d\phi_u\|_{\mathrm{op}}\leq \|d\phi_e\|_{\mathrm{op}}$
for every $u\in U$. Applying this inequality to the lengths of
piecewise $C^1$ curves and then taking the infimum over all curves
joining $u_1$ to $u_2$, we obtain \eqref{global_phi_estimate}. 
\end{proof}

\subsection{\texorpdfstring{Tessellations of $U$}{Tessellations of U}}Following \cite[\S3.1]{KM}, we say that an open set $B \subset U$ is a \underbar{tessellation domain} for the right action of $U$ on itself relative to a countable subset $\Lambda \subseteq U$ (we will just say "tessellation domain on $U$", and refer  to the pair $(B,\Lambda)$ as the tessellation of $U$) if

\begin{enumerate}
    \item $\mu_U\left( \partial B\right) = 0$; 
    \item If $\gamma_1 \neq \gamma_2$ are elements of $\Lambda$, then $B \gamma_1 \cap B \gamma_2 = \varnothing$;
    \item $U = \bigcup\limits_{\gamma \in \Lambda} \overline{B} \gamma$. 
\end{enumerate}

We will be using the following 

\begin{proposition}[\cite{KM}, Proposition 3.3] 
    For any $r > 0$ there exists a neighborhood $B_r$ of identity in $U$  such that $B_r$ is a tessellation domain on $U$ and $\diam(B_r) \leq
    r$. Moreover, {one} can choose the family $\{ B_r \}$ in such a way that if $r_1 < r_2$, then $B_{r_1} \subset B_{r_2}$.
\end{proposition}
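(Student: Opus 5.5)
The statement is \cite[Proposition 3.3]{KM}, and the plan follows the construction there. It rests on the existence of a lattice in the nilpotent group $U$: a simply connected nilpotent Lie group whose Lie algebra has a basis with rational structure constants admits a discrete cocompact subgroup (Malcev). For $U$ the unipotent radical of a parabolic in a real semisimple group this holds. After choosing a Chevalley-type basis adapted to the root decomposition \equ{rootdecomp}, the structure constants can be taken rational.

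\textbf{Step 1: a single tessellation domain.} Fix a lattice $\Lambda_0\subset U$. I take $B$ to be the interior of a nice fundamental domain for the right action of $\Lambda_0$ on $U$. The cleanest choice uses Malcev coordinates of the second kind: write $U=\exp(\R \bv_1)\cdots\exp(\R \bv_N)$ with the $\bv_i$ adapted to the lower central series and $\Lambda_0=\exp(\Z\bv_1)\cdots\exp(\Z\bv_N)$. Then the image of the open cube $(0,1)^N$ under this product map is open, disjoint from its distinct $\Lambda_0$-translates, and its closure covers $U$ under those translates. Its boundary is the image of $\partial[0,1]^N$ under a diffeomorphism, hence Haar-null. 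I expect this to be the main obstacle: verifying the fundamental domain property in these coordinates, by induction along a central series, using that $\Lambda_0\cap$ each term of the series is a lattice in it. To get a neighborhood of the identity rather than a domain with $e$ on its boundary, I translate by a suitable element, replacing $B$ by $Bu_0^{-1}$. This is harmless, since right translates of a tessellation domain are tessellation domains relative to conjugated lattices.

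\textbf{Step 2: shrinking.} I use a one-parameter group of contracting automorphisms of $U$, namely $\Omega_{-s\bv}|_U$ for $\bv\in\fe_\fu^+$ (for example $\bv=\log b$). Automorphisms map tessellations to tessellations, taking $(B,\Lambda)$ to $(\Omega(B),\Omega(\Lambda))$, and they preserve Haar-null boundaries because they scale $\mu_U$ by the constant $J$. By Lemma \ref{contraction bound lemma}\ref{lem33_b}, $\operatorname{diam}\Omega_{-s\bv}(B)\le \kappa e^{-cs\lfloor\bv\rfloor}\operatorname{diam}B\to0$ as $s\to\infty$. I choose $B$ bounded (it is, since its closure is the image of a compact cube), so for each $r$ there is $s(r)$ with $\operatorname{diam}B_r\le r$, where $B_r:=\Omega_{-s(r)\bv}(B)$.

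\textbf{Step 3: nesting.} I want $B_{r_1}\subset B_{r_2}$ for $r_1<r_2$, which amounts to $\Omega_{-s\bv}(B)\subset B$ for $s\ge0$. For that I arrange $B$ to be star-shaped with respect to the contracting flow. Concretely, I would replace the cube construction by one centered at $0$, the image of $(-\tfrac12,\tfrac12)^N$, with coordinates along root vectors, so that each coordinate is scaled by $e^{-s\lambda(\bv)}<1$ under $\Omega_{-s\bv}$. I would check that in these graded coordinates the map is $\Omega$-equivariant, which holds because $\Omega$ acts diagonally on root spaces and respects the product decomposition. The family $s\mapsto B_{r}$ is then monotone, and choosing $s(r)$ nonincreasing in $r$ gives the nesting.
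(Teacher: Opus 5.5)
\textbf{Gap: the tessellation property of the root-coordinate cube is not established.} The construction you end with in Step 3 is fine, but your justification of its tessellation property does not apply to it. In Step 1 the fundamental-domain property comes from a lattice $\Lambda_0$ and a strong Malcev basis strongly based on $\Lambda_0$. In Step 3 you switch to a basis of root vectors $Y_i\in\fh_{\lambda_i}$, which is what $\Omega$-equivariance needs, and you only re-check the equivariance.

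Nothing in your argument produces a lattice of the form $\exp(\Z Y_1)\cdots\exp(\Z Y_N)$ with all $Y_i$ root vectors. Malcev's theorem gives a basis adapted to $\Lambda_0$. Even if you run it along a chain of ideals spanned by root vectors, the resulting basis vectors are root vectors only modulo the next ideal, and then $\Omega_{-s\bv}$ no longer acts diagonally in the coordinates. Moreover, outside the split case the restricted root spaces can be multidimensional, and a Chevalley basis does not directly give a rational basis of $\fu$. So the existence of $\Lambda_0$ is itself asserted rather than proved. As written, the claim that the centered root-coordinate cube is a tessellation domain, which is the heart of the statement, is unsupported.

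\textbf{Fix: no lattice is needed.} A tessellation domain is taken relative to an arbitrary countable set $\Lambda$, not a subgroup. Fix $\bv\in\fe_\fu^+$ and order the root vectors so that $\lambda_i(\bv)$ is nondecreasing. Since $[\fh_\lambda,\fh_\mu]\subset\fh_{\lambda+\mu}$, each $\fu_k=\mathrm{span}(Y_k,\dots,Y_N)$ is an ideal, and $(t_i)\mapsto\prod_i\exp(t_iY_i)$ is a diffeomorphism onto $U$. Write $u=\exp(sY_1)u'$ and $\gamma=\exp(mY_1)\gamma'$ with $u'\in\exp\fu_2$ and $\gamma'\in\exp(\Z Y_2)\cdots\exp(\Z Y_N)$. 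Then $u\gamma=\exp((s+m)Y_1)\big(\exp(-mY_1)u'\exp(mY_1)\big)\gamma'$, with the middle factor in $\exp\fu_2$. Induction on $N$ therefore shows that each $u$ has exactly one $\gamma\in\Lambda:=\exp(\Z Y_1)\cdots\exp(\Z Y_N)$ with $u\gamma$ in the half-open cube. Hence the open cube is a tessellation domain relative to $\Lambda^{-1}$, with no group structure used. The same holds for the centered cube of side $2\epsilon$ with $\Z$ replaced by $2\epsilon\Z$, and this cube contains $e$.

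With this in hand you can simply let $\epsilon\to0$: the diameter tends to $0$ by continuity, and nesting is automatic. So neither Malcev's theorem nor the contraction of Steps 2--3 (via Lemma~\ref{contraction bound lemma}) is needed, although your contraction route also works once the tessellation property is obtained this way. The paper gives no argument beyond citing \cite{KM} and remarking that nesting ``follows directly from the construction'' there. That remark fits an explicit box construction of this kind, where enlarging the box parameter enlarges the domain.
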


The monotonic inclusion property is not stated in \cite{KM}, but it follows directly from the construction in their proof. From now on, $B_r$ is a fixed tessellation domain with the above properties, {and $\Lambda_r$ is the corresponding set of translations}.

{Using \cite[Proposition 3.4] {KM},  one can estimate the number of $\gamma \in \Lambda_r$ such that the right translate of $B_r$ by $\gamma$ is contained in $\Omega_{\bv} ( B_r)$ as follows:
$$
\# \left\{ \gamma \in \Lambda_r: B_r \gamma \subseteq \Omega_{\bv} ( B_r) \right\} \geq
{J_{\bv}}\Big( 1 - \frac{\mu_U\big(\{u\in U: \dist(u,\partial B_r)\le {\ell_{\bv}}\diam(B_r)\}\big)}{\mu_U(B_r)}
\Big).
$$}
{In view of} Lemma \ref{contraction bound lemma}(b)
{and} the assumption $\mu_U\left( \partial B_r\right) = 0$, we obtain the following

\begin{corollary}\label{cor_boundary_effect}
    Let $(B_r, \Lambda_r)$ be a tessellation of $U$ and $\delta > 0$. There exists ${t_2} = {t_2}( 
    r, \delta) > 0$ such that for any $\bv \in \fe_{\fu}^+
    $ {with $\lfloor \bv\rfloor_{{\fe_{\fu}^+}} \geq {t_2}$}
    one has 
    $$
    \# \{ \gamma \in \Lambda_r: \ B_r \gamma \subseteq  \Omega_{\bv}(B_r) \} \geq  
    {J_{\bv}}\big( 1 - \delta \big).
    $$
\end{corollary}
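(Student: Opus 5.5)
The argument combines the counting estimate from \cite[Proposition 3.4]{KM}, quoted just before the corollary, with Lemma \ref{contraction bound lemma}(b), and then lets $\lfloor \bv\rfloor_{\fe_{\fu}^+}\to\infty$.

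Fix $r>0$ and the tessellation $(B_r,\Lambda_r)$. For $s>0$ put
$$N_s:=\{u\in U: \dist(u,\partial B_r)\le s\}, \qquad h(s):=\frac{\mu_U(N_s)}{\mu_U(B_r)}.$$
Then for every $\bv\in\fe_{\fu}^+$ the quoted estimate reads
$$\#\{\gamma\in\Lambda_r: B_r\gamma\subseteq\Omega_{\bv}(B_r)\}\ \ge\ J_{\bv}\bigl(1-h(\ell_{\bv}\diam(B_r))\bigr).$$
It therefore suffices to find $t_2$ such that $h(\ell_{\bv}\diam(B_r))\le\delta$ whenever $\lfloor\bv\rfloor_{\fe_{\fu}^+}\ge t_2$.

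The first step is to show that $h(s)\to0$ as $s\to0^+$. The sets $N_s$ are closed and decrease as $s\downarrow0$, and their intersection is $\{u:\dist(u,\partial B_r)=0\}=\partial B_r$, because $\partial B_r$ is closed. Since $B_r$ is bounded, $N_1$ is bounded and so has finite $\mu_U$-measure. Continuity of measure from above then gives $\mu_U(N_s)\to\mu_U(\partial B_r)=0$. In addition $\mu_U(B_r)>0$, because $B_r$ is a nonempty open set. Hence there is $s_0=s_0(r,\delta)>0$ with $h(s)\le\delta$ for all $s\le s_0$, using that $h$ is monotone in $s$.

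The second step uses Lemma \ref{contraction bound lemma}(b), which gives $\ell_{\bv}\diam(B_r)\le \kappa r\, e^{-c\lfloor\bv\rfloor_{\fe_{\fu}^+}}$, since $\diam(B_r)\le r$. Here $c>0$, because $\Phi_{\fu}$ is a finite set of nonzero roots. I would choose $t_2$ so that $\kappa r e^{-ct_2}\le s_0$. Monotonicity of $h$ then gives the claim for every $\bv$ with $\lfloor\bv\rfloor_{\fe_{\fu}^+}\ge t_2$.

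The only point needing care is the measure-continuity step, namely checking that the neighborhoods shrink exactly onto $\partial B_r$ and have finite measure. Everything else is direct substitution. The constant $t_2$ depends on $r$ through $B_r$, and on $\kappa$ and $c$, which are fixed by $U$ and the metric, consistent with the notation $t_2(r,\delta)$.
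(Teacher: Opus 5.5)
Your proposal is correct and follows the paper's argument: you insert the bound $\ell_{\bv}\diam(B_r)\le \kappa r\, e^{-c\lfloor\bv\rfloor_{\fe_{\fu}^+}}$ from Lemma \ref{contraction bound lemma}(b) into the counting estimate from \cite[Proposition 3.4]{KM}, and then use $\mu_U(\partial B_r)=0$. The paper leaves the continuity-from-above step implicit. You fill it in correctly: the neighborhoods are closed, have finite measure, and shrink exactly onto $\partial B_r$.
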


For $r>0$, $x\in X$, $K\subset X$ and $\bv \in {\fa}$ let us use the notation $$\Lambda_{r,\bv}(x, K): = \{ \gamma \in \Lambda_r: \ B_r \gamma \subseteq  \Omega_{\bv}(B_r) \ \text{and} \ B_r \gamma \exp(\bv) x \subset K \}.$$

\begin{proposition}\label{number_of_tiles_bound}
    Let $K \subset X$ be a compact set
    {such that} $\mu_X(\partial K) =0$, {and} let $\eta > 0$. {Then} there exists $r_0 = r_0(K, \eta)$ with the following property: for any $r \leq r_0$ there exists $t_0 = t_0({r, K, \eta})$ such that for any $x \in K$ 
    and any
    $\bv \in \fs_{\fu}^+
    $ {with $\lfloor \bv\rfloor_{\fs_{\fu}^+} \geq {t_0}$} one has 
\begin{equation}\label{number_of_tiles_bound_1}
    \#\Lambda_{r,\bv}(x, K)
    \geq 
    {J_{\bv}}\left( \mu_X(K) - \eta \right).
    \end{equation}
\end{proposition}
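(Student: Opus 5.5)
We will derive \eqref{number_of_tiles_bound_1} by combining the equidistribution statement of Proposition~\ref{thm22_cor} with the counting estimate of Corollary~\ref{cor_boundary_effect}. The key is the identity
\[
\exp(\bv)ux=\Omega_{\bv}(u)\exp(\bv)x .
\]
Put $y:=\exp(\bv)x$. Then the set $\{u\in B_r:\exp(\bv)ux\in K'\}$ is mapped by $\Omega_{\bv}$ onto $\{w\in \Omega_{\bv}(B_r): wy\in K'\}$, and its $\mu_U$-measure gets multiplied by $J_{\bv}$. A tile $B_r\gamma$ lies in $\Lambda_{r,\bv}(x,K)$ exactly when $B_r\gamma\subseteq \Omega_{\bv}(B_r)$ and $B_r\gamma y\subset K$.

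\textbf{Step 1: a shrunken target.} For $s>0$ let $K_s^-=\{z\in K:\dist(z,\partial K)\ge s\}$, as in the proof of Proposition~\ref{thm22_cor}. The sets $\partial K_s^-$ are contained in the pairwise disjoint level sets $\{\dist(\cdot,\partial K)=s\}$, so $\mu_X(\partial K_s^-)=0$ for all but countably many $s$. Since $\mu_X(\partial K)=0$, we have $\mu_X(K_s^-)\to\mu_X(K)$ as $s\to 0$. Choose $r_0=r_0(K,\eta)$ so that $\mu_X(K)-\mu_X(K_s^-)<\eta/3$ for all $s\le r_0$.

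Given $r\le r_0$, fix some $s\in[r,r_0]$ with $\mu_X(\partial K_s^-)=0$ and set $K':=K_s^-$. We may assume $\mu_X(K')>0$, since otherwise $\mu_X(K)-\eta<0$ and \eqref{number_of_tiles_bound_1} is trivial.

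By right-invariance of the metric, $\dist(w'y,wy)\le\dist(w',w)$. Hence if a tile $B_r\gamma$ (of diameter $\le r\le s$) contains one point $w$ with $wy\in K'$, then $B_r\gamma y\subset K$.

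\textbf{Step 2: equidistribution.} Apply Proposition~\ref{thm22_cor} with $B=B_r$, target $K'$, compact set $Q=K$, and $\delta=\eta/3$. This gives $t_1$ such that, for $x\in K$ and $\lfloor\bv\rfloor_{\fs_\fu^+}\ge t_1$,
\[
\mu_U\big(\{w\in\Omega_{\bv}(B_r):wy\in K'\}\big)\ \ge\ J_{\bv}\,\mu_U(B_r)\big(\mu_X(K')-\eta/3\big).
\]

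\textbf{Step 3: counting tiles.} Apply Corollary~\ref{cor_boundary_effect} with $\delta=\eta/3$ to get $t_2$. Since $\fs_\fu^+\subset\fe_\fu^+$, we have $\lfloor\bv\rfloor_{\fe_\fu^+}\ge\lfloor\bv\rfloor_{\fs_\fu^+}$, so the Corollary applies once $\lfloor\bv\rfloor_{\fs_\fu^+}\ge t_2$. The tiles contained in $\Omega_{\bv}(B_r)$ are disjoint, number at least $J_{\bv}(1-\eta/3)$, and each has measure $\mu_U(B_r)$. Their union $T$ therefore has measure at least $J_{\bv}\mu_U(B_r)(1-\eta/3)$, so
\[
\mu_U\big(\Omega_{\bv}(B_r)\smallsetminus T\big)\le J_{\bv}\mu_U(B_r)\,\eta/3 .
\]
Consequently the good set from Step 2 meets $T$ in measure at least $J_{\bv}\mu_U(B_r)(\mu_X(K')-2\eta/3)$. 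Tile boundaries are null, so this portion is covered by at least $J_{\bv}(\mu_X(K')-2\eta/3)\ge J_{\bv}(\mu_X(K)-\eta)$ tiles, each of which meets the good set. By Step 1 every such tile lies in $\Lambda_{r,\bv}(x,K)$. This proves \eqref{number_of_tiles_bound_1} with $t_0=\max(t_1,t_2)$.

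The main point needing care is Step 1: the shrinking parameter must be at least $r$ so that a single good point forces the whole tile into $K$, while still keeping $\mu_X(\partial K')=0$ and losing at most $\eta/3$ of measure. This is exactly why $r_0$ depends on $K$ and $\eta$, and why $t_0$ may depend on $r$ through $B_r$ in Proposition~\ref{thm22_cor}.
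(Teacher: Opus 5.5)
Your argument is correct and is essentially the paper's proof. The paper likewise replaces $K$ by a compact $K'\subset K$ with null boundary and $\mu_X(K')\ge\mu_X(K)-\eta/3$, and chooses $r_0$ with $B_{r_0}B_{r_0}^{-1}K'\subseteq K$; your condition $s\ge r$ is the metric version of this. It then combines Proposition~\ref{thm22_cor} and Corollary~\ref{cor_boundary_effect} with $\delta=\eta/3$, counting (all tiles inside $\Omega_{\bv}(B_r)$) minus (those missing the good set), which yields the same bound $J_{\bv}(\mu_X(K')-2\eta/3)$ as your intersection-with-$T$ count.

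There is one small slip: when $r=r_0$ the interval $[r,r_0]$ is just $\{r_0\}$, and $r_0$ may be one of the exceptional values. The fix is to choose $r_0$ with $\mu_X(\partial K_{r_0}^-)=0$ and use $K'=K_{r_0}^-$ for every $r\le r_0$, which also makes $K'$ independent of $r$ as in the paper.
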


\begin{proof}
    The proof follows the lines of \cite[Proposition 3.6]{KM} and \cite[Proposition 4.1]{EKR25}. First of all, if $\mu_X(K) = 0$, \eqref{number_of_tiles_bound_1} holds automatically. From now on, assume $\mu_X(K) > 0$.

    {Recall that we are given  $\eta > 0$.} Choose a compact subset $K'$ contained in  $K$ such that 
    $\mu_X(\partial K') = 0$, $\dist(K', \partial K) > 0$ and $\mu_X(K') \geq \mu_X(K) - \frac{\eta}{3}$.
    Then choose  $r_0$ such that 
    $B_{r_0} B_{r_0}^{-1} K' \subseteq K$, which automatically implies that $B_{r} B_{r}^{-1} K' \subseteq K$ for any $r \leq r_0$.

    Fix $r \leq r_0$ and let $B = B_r, \ \Lambda = \Lambda_r$. Then, for any $x \in X$ {and}  any $\bv \in \fs_{\fu}^+ 
    $ 
    one has
    $$
    \begin{aligned}
        {\#\Lambda_{r,\bv}(x, K)} &\geq \# \{ \gamma \in \Lambda: \ B \gamma \subseteq  \Omega_{\bv}(B) \ \text{and} \ B \gamma \exp(\bv) x \subset B B^{-1} K' \} \\ &\geq \# \{ \gamma \in \Lambda: \ B \gamma \subseteq  \Omega_{\bv}(B) \ \text{and} \ \gamma \exp(\bv) x \in B^{-1} K' \} \\ &\geq \# \{ \gamma \in \Lambda: \ B \gamma \subseteq  \Omega_{\bv}(B) \ \text{and} \ B\gamma \exp(\bv) x \cap K' \neq \varnothing \} \\  = \# \{ \gamma \in \Lambda: \ B \gamma \subseteq  \Omega_{\bv}(B)\} &- \# \{ \gamma \in \Lambda: \ B \gamma \subseteq  \Omega_{\bv}(B) \ \text{and} \ B\gamma \exp(\bv) x \cap K' = \varnothing \}.
    \end{aligned}
$$
   {Since $(B,\Lambda)$ is a tessellation of $U$},  it follows
    that
    $$
    \begin{aligned}
    &\mu_U(B)   \# \{ \gamma \in \Lambda: \ B \gamma \subseteq  \Omega_{\bv}(B) \ \text{and} \ B\gamma \exp(\bv) x \,\cap\, K' = \varnothing \} \\
    & \leq \mu_U \big( \Omega_{\bv}(B) \smallsetminus \{ u \in \Omega_{\bv}(B): \ u \exp(\bv) x \in K' \} \big)\\
    & = 
    {J_{\bv}}\left( \mu_U(B) - \mu_U \big( \{ u \in B: \ \exp(\bv) u x \in K' \} \right)  \big).
    \end{aligned}
    $$

    Putting everything together, we conclude that
   $$
  \#\Lambda_{r,\bv}(x, K)   \geq \# \{ \gamma \in \Lambda: \ B \gamma \subseteq  \Omega_{\bv}(B)\} - 
  {J_{\bv}}\left( 1 - \frac{\mu_U \left( \{ u \in B: \ \exp(\bv) u x \in K' \} \right)}{\mu_U(B)}  \right).
    $$
    Now apply Proposition \ref{thm22_cor} and Corollary \ref{cor_boundary_effect}  with $\delta = \frac{\eta}{3}$: then for $\lfloor \bv\rfloor_{\fs_{\fu}^+} \geq \max(t_1, t_2)$ one has
   $$
    \#\Lambda_{r,\bv}(x, K) 
    \geq  
    {J_{\bv}}\left( 1 - \frac{\eta}{3} \right) - 
    {J_{\bv}}+ 
    {J_{\bv}}\left(\mu_X(K') - \frac{\eta}{3} \right)
     \geq
    {J_{\bv}}\left( \mu_X(K) - \eta \right),
   $$
{which finishes the proof.}     
\end{proof}

\subsection{Tree-like collections {and a \hd\ estimate}}\label{tree-like section}

{Our 
final goal in this section is, given a quasi-ray $\mathscr{V}$ in  {$\fs_{\fu}^+$  and $B = B_r$ as in the previous subsection, to construct a "Cantor-type" set $E_\infty$ that is contained inside   
$\{u\in B:  \exp(\mathscr{V})ux \subset K\}$ for some bounded $K\subset X$, and estimate its \hd. These  Cantor-type sets will be realized as limit sets of  strongly tree-like collection of subsets of $U$.
    Following \cite{KM}}, we call a collection $\mathcal{E}$ of subsets 
    of $U$ \underbar{strongly tree-like} if:
    \begin{enumerate}[label=(\arabic*)]
        \item\label{tl1} Each $E \in \mathcal{E}$ is a compact set with nonempty interior.
        \item\label{tl2} We have a partition
        \begin{equation*}
            \mathcal{E} = \bigcup_{i \in \Z_{\geq 0}} \mathcal{E}_k
        \end{equation*}
        with each $\mathcal{E}_i$ being finite and $\mathcal{E}_0$ being a singleton.
        \item\label{tl3} If $E' \in \mathcal{E}_i$ with $i \in \N$, there is a unique $E \in \mathcal{E}_{i-1}$ with $E' \subset E$.
        \item\label{tl4} For each $E \in \mathcal{E}_i$ with $i \in \Z_{\geq 0}$, there exists $E' \in \mathcal{E}_{i+1}$ with $E' \subset E$.
        \item\label{tl5} If $E_1, E_2 \in \mathcal{E}_i$ are distinct, then $\mu_U(E_1 \cap E_2)=0$. 
        \item\label{tl6} If we define 
        $d_i := \sup\{{\diam}(E) : E \in \mathcal{E}_i\}$, 
         then  $\lim_{i \to \infty} d_i = 0$.
    \end{enumerate}
    We write
    \begin{equation*}
        \cup \mathcal{E}_i := \bigcup_{E \in \mathcal{E}_i} E
    \end{equation*}
    and define the \underbar{limit set} of the collection to be $E_\infty := \bigcap_{i \in \N} \cup \mathcal{E}_i$.
}

It has been known since the work of McMullen \cite{Mc} and Urbanski \cite{U} that a lower estimate for the \hd\ of $E_\infty$ hinges on a quantitative strengthening of property (4) above (a uniform estimate for the number of children of any given set) and a rate of decay of $d_i$ in (6). Specifically, for $E \in \mathcal{E}_i$ let
\begin{equation*}
    \text{density}(\mathcal{E}_{i+1},E) := \frac{\mu_U\big((\cup \mathcal{E}_{i+1}) \cap E\big)}{\mu_U(E)} = \frac{\mu_U\big(\cup \mathcal{E}_{i+1}(E)\big)}{\mu_U(E)},
\end{equation*}
and for each $i \in \Z_{\geq 0}$,   set 
\begin{equation*}\label{eq: density}
        \Delta_i := \inf\left\lbrace \operatorname{density}(\mathcal{E}_{{i+1}}, E) : E \in \mathcal{E}_i \right\rbrace
\end{equation*}

We will use the following {well-known estimate}:
\begin{theorem}[Lemma 2.1 in \cite{U}]\label{thm: urbanski}
    For a strongly tree-like collection $\mathcal{E}$ and the resulting 
limit set $E_\infty$, {one has}
    \begin{equation*}
        \dim E_\infty \geq  \dim U - \limsup_{k \to \infty} \frac{\sum_{i=0}^{k} \log \Delta_i}{\log d_k}.
    \end{equation*}
\end{theorem}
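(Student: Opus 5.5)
The plan is the mass distribution principle. We build a probability measure $\nu$ supported on $E_\infty$ whose mass on small balls is controlled by Haar measure divided by the product of densities. Write $s := \limsup_{k \to \infty} \frac{\sum_{i=0}^{k} \log \Delta_i}{\log d_k}$. If $s = \infty$ (in particular if some $\Delta_i = 0$) or $s \ge \dim U$, there is nothing to prove, so assume $s < \dim U$ and all $\Delta_i > 0$. Since $d_k \to 0$ and $\log \Delta_i \le 0$, we also have $s \ge 0$.

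\textbf{Constructing $\nu$.} Let $\mathcal{E}_0 = \{E_0\}$ and set $\nu(E_0) = 1$. Inductively, for $E \in \mathcal{E}_i$ and a child $E' \in \mathcal{E}_{i+1}$ with $E' \subset E$, put
$$\nu(E') := \nu(E)\,\frac{\mu_U(E')}{\mu_U\big(\cup\mathcal{E}_{i+1}(E)\big)}.$$
By \ref{tl5} the children overlap only in $\mu_U$-null sets, so these masses add up to $\nu(E)$. Properties \ref{tl1}--\ref{tl4} and \ref{tl6} give a consistent system of masses on the compact nested sets. A standard Kolmogorov/Carathéodory argument (or a weak-$*$ limit of the measures $\sum_{E \in \mathcal{E}_i} \nu(E)\, \mu_U|_E/\mu_U(E)$) then produces a Borel probability measure supported on $E_\infty$. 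Since $\mu_U(\cup\mathcal{E}_{i+1}(E)) \ge \Delta_i\, \mu_U(E)$, induction on $k$ yields, for every $E \in \mathcal{E}_k$,
$$\nu(E) \le \frac{\mu_U(E)}{\mu_U(E_0)\prod_{j<k}\Delta_j}.$$

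\textbf{Bounding $\nu$ on small balls.} Fix $\varepsilon > 0$ and a ball $B(u,\rho)$ in $U$ with $\rho$ small. Because $d_k$ need not be monotone, let $k$ be the \emph{smallest} index with $d_k \le \rho$, so that $d_{k-1} > \rho$. Every $E \in \mathcal{E}_k$ meeting $B(u,\rho)$ lies inside $B(u,2\rho)$. By almost disjointness \ref{tl5}, their Haar measures sum to at most $\mu_U(B(u,2\rho)) \le C\rho^{\dim U}$; the uniform constant $C$ comes from right-invariance of the metric and the local Euclidean structure of $U$ near $e$. Hence
$$\nu\big(B(u,\rho)\big) \le \frac{C\rho^{\dim U}}{\mu_U(E_0)\prod_{j\le k-1}\Delta_j}.$$
For $k$ large (equivalently $\rho$ small), the definition of $s$ gives $\prod_{j \le k-1} \Delta_j \ge d_{k-1}^{\,s+\varepsilon}$. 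Since the exponent is positive and $d_{k-1} > \rho$, this is greater than $\rho^{s+\varepsilon}$. We conclude $\nu(B(u,\rho)) \le C'\rho^{\dim U - s - \varepsilon}$. The mass distribution principle then gives $\dim E_\infty \ge \dim U - s - \varepsilon$, and letting $\varepsilon \to 0$ proves the theorem.

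\textbf{Main obstacle.} The delicate point is the index bookkeeping in the ball estimate. The non-monotonicity of $d_k$ forces the "first time below $\rho$" choice of $k$. That same choice is exactly what converts the product bound, which is phrased in terms of $d_{k-1}$, into a bound in terms of $\rho$ via $d_{k-1} > \rho$. A secondary technical point is checking that $\nu$ is a genuine measure concentrated on $E_\infty$; this uses compactness of the sets and nestedness from \ref{tl3}.
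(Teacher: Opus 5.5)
Your argument is correct and is the standard mass-distribution proof of this estimate due to McMullen \cite{Mc} and Urba\'nski \cite{U}; the paper does not reprove the statement but quotes \cite[Lemma 2.1]{U}. Two minor remarks: property \ref{tl3} already gives $d_{k}\le d_{k-1}$, so your ``first index with $d_k\le\rho$'' device is harmless but unnecessary; and if you take $\nu$ as a weak-$*$ limit, upper bounds pass to the limit only on open sets, so the Kolmogorov construction via the coding map on infinite branches is cleaner, since there $\nu(B)\le\sum_{E\in\mathcal{E}_k,\,E\cap B\ne\varnothing}\nu(E)$ is immediate.
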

\smallskip

{With this extensive preparation we are ready for the}

\begin{proof}[Proof of Theorem \ref{reduction}] {Recall that we are given $\varepsilon > 0$,  $x\in X$ and   non-empty open $W\subset U$. Picking $u\in W$ and replacing $x$ with $ux$, one can see that it is enough to prove the result for $W$  of the form $B_r$, where $r$ is small enough. Choose a compact set $K \subset X$ such that $0 < \mu_X(K) < 1$, $\mu_X(\partial K) = 0$ and $x \in K$. 
Then    put  $$\eta := \mu_X(K) - \mu_X(K)^2,$$ choose 
$r \le
r_0(K,\eta)
$, and take $t \ge t_0 = t_0(r, K,\eta)$. Our goal now is, given $\varepsilon > 0$, choose $t$ large enough so that for any quasi-ray $\mathscr{V}$   in $\fs_{\fu}^+$ with $\rho_{\fs_{\fu}^+}(\mathscr{V})\ge t$, the estimate \equ{goal} holds.}

{For a fixed} $r>0$, let $B = B_r$, and let $\Lambda = \Lambda_r$ be {a} discrete set  such that $B$ is a tessellation domain on $U$ relative to $\Lambda$. {Let us now use $\mathscr{V}$ as above and, following \cite{KM} where this was done for iterations of a single transformation, construct a tree-like collection of subsets of $\overline{B}$ as follows.}
We set $\mathcal{E}_0: = \{ \overline{B} \}$ and define collections $\mathcal{E}_i$ inductively: 
let 
$$
\mathcal{E}_1: = \left\{ \Omega_{{\bv_{1}}}^{-1}(\overline{B} \gamma): \gamma \in \Lambda_{r, {\bv_{1}}}(x, K) \right\}.
$$
Now suppose that
{for   $i \in \N$} the set $\mathcal{E}_i$ is defined, and for any  $E \in \mathcal{E}_i$ two properties hold:
\begin{enumerate}[label=(\Alph*)]
    \item\label{indA} $E = \Omega_{{\bv_{i}}}^{-1}  (\overline{B})  u$ for some (uniquely defined) $u \in U$, 
    and 
    \item\label{indB} ${\exp(\bv_{i})} E x \subset K$;
\end{enumerate}
{then we define}
$$
\mathcal{E}_{i+1}(E): = \left\{ \Omega_{{\bv_{{i+1}}}}^{-1}(\overline{B} \gamma)u: \gamma \in \Lambda_{r, {\bv_{{i+1}} - \bv_{i}}}({\exp(\bv_{i})}ux, K) \right\}
$$
and let
$$
\mathcal{E}_{i+1}: = \bigcup\limits_{E \in \mathcal{E}_i}\mathcal{E}_{i+1}(E).
$$
{Clearly properties \ref{indA} and \ref{indB} hold for $i=1$, and their validity} for $i$ replaced by $i+1$ follow immediately from construction.
{Also it can be easily seen that $\exp(\{\bv_1,\dots,\bv_i\})Ex\subset K$ for any $E \in \mathcal{E}_i$.}
Finally, let $E_\infty = \bigcap_{k \in \N} \cup \mathcal{E}_k$; {then it follows that  $\exp(\mathscr{V})E_\infty  x \subset K$.}

\smallskip

Let us verify that for small enough values of $r$ and large enough values of $t$ the collection $\mathcal{E}$ is strongly tree-like. Properties \ref{tl1}, \ref{tl2}, \ref{tl3} are automatic from {the} construction. Property \ref{tl5} follows directly since $B_r$ is a tessellation domain: we know that $\mu_U(\partial B_r) = 0$, and thus $\mu_U\left( \overline{B}_R \gamma_1 \cap \overline{B}_R \gamma_2 \right) = 0$ for any $\gamma_1 \neq \gamma_2 \in \Lambda_r$.

We will show that a stronger statement than \ref{tl4} holds if   {$t$ is} large enough. Take $E \in \mathcal{E}_i$, and let $u$ be as in \ref{indA}.
By Proposition \ref{number_of_tiles_bound} and with our choice of $r$ and $t$, 
for any $i\in\Z_{\ge 0}$ one has 
\begin{equation}\label{actual_number_of_tiles}
\# \Lambda_{r,{\bv_{i+1} - \bv_i}}\big({\exp(\bv_i)}  u x, K\big) \geq  J_{{\bv_{i+1} - \bv_i}}  \mu_X(K)^2,
\end{equation}
{as long as  $t \geq t_0$}.
{Note that, {by Lemma \ref{contraction bound lemma}(a),}
$$
\begin{aligned}
J_{\bv_{i+1} - \bv_i} \geq \exp\left(c \dim(U) \lfloor \bv_{i+1} - \bv_i \rfloor_{\fe_{\fu}^+}\right) &\geq \exp\left(c \dim(U) \lfloor \bv_{i+1} - \bv_i \rfloor_{\fs_{\fu}^+}\right)\\&\geq \exp\left(c \dim(U) \rho_{\fs_{\fu}^+}(\mathscr{V})\right) \geq e^{c    \dim(U)t},
\end{aligned}
$$ 
and, increasing ${t}$ if necessary, we can guarantee that $J_{{\bv_{i+1} - \bv_i}}  \mu_X(K)^2 \geq 1$.
}
This proves that the set {of children  $E'$ of an arbitrary set $E \in \mathcal{E}_i$ is nonempty, yielding \ref{tl4}. Moreover, we have the following density estimate: using \eqref{actual_number_of_tiles} and the fact that $\mu_U(\partial B) = 0$, we see that 
\begin{align*}
    \text{density}(\mathcal{E}_{i+1},E) &= \frac{\mu_U\big(\cup \mathcal{E}_{i+1}(E)\big)}{\mu_U(E)} = \# \Lambda_{r,\bv_{i+1} - \bv_i}(\exp(\bv_i) ux, K)   \frac{\mu_U(B_r)}{\mu_U\left( \Omega_{{\bv_{i+1} - \bv_i}}(B_r) \right)}\\  &= \frac{\# \Lambda_{r,\bv_{i+1} - \bv_i}(\exp(\bv_i) ux, K)}{J_{{\bv_{i+1} - \bv_i}}}   \geq \mu_X(K)^2,
\end{align*}
and thus \eq{bigdensity}{\Delta_i \geq \mu_X(K)^2\text{ for any }i \in \Z_{\ge 0}.}} 

It remains to show property \ref{tl6}. By Lemma \ref{contraction bound lemma}(b) we conclude that $d_{i+1} \leq \kappa e^{-ct} d_i$ for some constant $\kappa$ depending only on the choice of metric on $U$.
Therefore 
\begin{equation}\label{di_bound}
d_{i} \leq {\kappa^{i} e^{-ict}}{r}.
\end{equation}
{Increasing $t$   
if necessary}, we can guarantee that $\kappa e^{-ct} < 1$,
and thus $\lim\limits_{i \to \infty} d_i = 0$.

Applying Theorem \ref{thm: urbanski} and using \equ{bigdensity} along with \eqref{di_bound}, we obtain the lower bound 
$$
\dim E_\infty \geq  \dim U - \limsup_{k \to \infty} \frac{2(k+1) \log\mu_X(K)}{ \log r - k(ct - \log\kappa)} = \dim U - \frac{2 \log \mu_X(K)}{\log\kappa - ct},
$$
{hence for any $\varepsilon > 0$ one can choose $t\ge t_0$ such that \equ{goal} holds.} \end{proof}

\section{Quasimultiplicativity and {badly approximable matrices}}\label{qmd}

{The main goal of this section is to prove Theorem \ref{badisfull}.} 
Following \cite{N25}, 
{say that} an increasing function $h:
{I}\rightarrow \mathbb{R}_{>0}$, {where $I\subset  \mathbb{R}_{>0}$ is an interval, is} 
\underbar{quasimultiplicative} if 
there exist {$R_0 > 1$ and two constants $k_2 \geq k_1>0$} 
such that for any  $R \geq R_0$ and any {$T >0$ with $T,RT\in I$} one has
\begin{equation}\label{quasireformulation}
    R^{k_1} h(T) \leq h(RT) \leq R^{k_2} h(T).
\end{equation}

See \cite[\S5.1]{N25} for another equivalent definition. {Informally speaking}, the quasimultiplicativity {of $h$} is the property that allows {one} to turn multiplication of {the variable}  
by a constant into multiplication of {the value of $h$}
by a (different) constant, and vice versa, with some controlled error. 

\smallskip

We now return to the general set-up of approximation with weight functions, that is, to studying sets $\wab(f)$,  
where $f$ is an 
approximation function {and $\alpha_i:(0,1] \rightarrow (0,1]$ and $  \beta_j:[1,\infty) \rightarrow [1,\infty)$ are  monotonically increasing bijections satisfying \equ{normalizatiion}}. 
Some definitive results for Lebesgue-generic matrices have recently been obtained by the first-named author and Wang. The following is a direct corollary  of \cite[Theorem 2.7]{KW23}:

\begin{theorem}\label{badisnull}
    Suppose $\alpha_i$, $i=1,\dots,m$,  
    are quasimultiplicative functions, $\beta_j$ are arbitrary, and $f$ is 
    {an approximation function}. Then  the Lebesgue measure of $\wab(f)$ is zero or full according to {the convergence or divergence of} $\sum_{k=1}^\infty f(k)$.
\end{theorem}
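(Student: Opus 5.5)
The plan is to deduce the statement from \cite[Theorem 2.7]{KW23} by recasting $\wab(f)$ as a $\limsup$ set of a Khintchine--Groshev type with rectangular targets, and then checking that the hypotheses and the divergence criterion there reduce to ours.

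First I would eliminate the parameter $T$. For $\vq\in\Z^n\nz$ let
\[
N(\vq):=\min\{T\ge 1: |q_j|\le\beta_j(T)\ \forall j\};
\]
it is well defined since the $\beta_j$ are increasing bijections of $[1,\infty)$. Because $f$ and the $\alpha_i$ are monotone, \equ{Dirichlet_hom_fcns} is solvable for some $T$ with a given $\vq$ if and only if it is solvable with $T=N(\vq)$, where the right-hand sides $\alpha_i(f(T))$ are largest. Hence $\Theta\in\wab(f)$ if and only if
\[
\|\Theta_i\vq\|\le\psi_i(\vq):=\alpha_i\big(f(N(\vq))\big),\quad i=1,\dots,m,
\]
for infinitely many $\vq$. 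The "unbounded set of $T$" condition and the "infinitely many $\vq$" condition can only differ on the Lebesgue-null set of $\Theta$ for which $\Theta\vq\in\Z^m$ for some $\vq\ne0$, so the two sets have the same measure.

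Second, I would compute the series governing \cite[Theorem 2.7]{KW23}. By the normalization \equ{normalizatiion}, $\prod_i\psi_i(\vq)=f(N(\vq))$. The number of integer points with $N(\vq)\le T$ is comparable to the volume of the box $\prod_j[-\beta_j(T),\beta_j(T)]$, which is $2^nT$. So, after grouping $\vq$ into shells $N(\vq)\in[2^s,2^{s+1})$ and using that $f$ is monotone (Cauchy condensation), the sum $\sum_{\vq}\prod_i\psi_i(\vq)$ converges or diverges together with $\sum_k f(k)$. The lattice-point count differs from the volume when some $\beta_j(T)$ stays close to $1$. I would handle this by a comparison on each dyadic shell: there the count is at most a constant times $2^s+{}$(lower-dimensional terms). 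Alternatively, if \cite[Theorem 2.7]{KW23} is phrased through $T$-indexed boxes, I would use its own volume-based sum and avoid counting altogether.

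Third, I would verify the regularity hypotheses of \cite[Theorem 2.7]{KW23}. These are imposed only on the approximating side, which is exactly why the $\beta_j$ can be arbitrary. Quasimultiplicativity \equ{quasireformulation} of each $\alpha_i$ gives $\alpha_i(\nicefrac{x}{R})\asymp_R\alpha_i(x)$ with polynomial control in $R$. This lets one replace $f$ by a dyadically regularized function and keeps the target boxes comparable under bounded rescaling, which is the kind of doubling/quasi-independence condition such Khintchine theorems need.

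I expect the main obstacle to be this last matching step: translating quasimultiplicativity into precisely the form of hypothesis used in \cite{KW23}, and making sure that an arbitrary $\beta$ causes no trouble in the divergence part. I would try first to state everything in the $T$-parametrized form of \eqref{Dirichlet_hom_fcns}, which is likely closest to their setup.
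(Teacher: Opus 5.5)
Your plan is essentially the paper's route: the paper gives no argument beyond recording the statement as a direct corollary of \cite[Theorem 2.7]{KW23}, so your reduction to a $\vq$-indexed $\limsup$ set and your series comparison are extra bookkeeping, needed or not depending only on how that theorem is phrased. If you do carry out the comparison, note that $\beta_j\ge1$ already gives $T-1\le\#\{\vq\ne0: N(\vq)\le T\}\le 3^nT$, so the lattice-point count against volume is never a problem; for the divergence half, however, compare cumulative counts (by Abel summation, or over blocks of boundedly many dyadic shells) rather than single shells, since a single shell can contain far fewer than $2^s$ points.
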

In particular it follows that, under the quasimultiplicativity assumptions on $\alpha_i$, the set  \baab\ has Lebesgue measure zero.
This way we can view Theorem \ref{badisfull} as a statement regarding the Hausdorff dimension that is 
complementary to Theorem \ref{badisnull}.

Now let us describe the correspondence with dynamics. It will be convenient to use the variable $t = \log T$  and define {increasing bijections} $a_i,b_j:\R_{\ge0}\to \R_{\ge0}$ by
\begin{equation}\label{ab_definition}
e^{a_i(t)} = \frac{1}{\alpha_i \left(e^{-t}\right)} 
, \,\,\,\,\,\,\,\, e^{b_j(t)} = \beta_j(e^t).
\end{equation}
Clearly \equ{normalizatiion} implies that \eq{lognormalizatiion}{{\sum\limits_{i=1}^m a_i(t) = \sum\limits_{j=1}^n b_j(t) = t.}}
This way 
{the elements $g(t)$ of $F^{\alf, \betf}$ as in \equ{curveinT}
can be written in the form}
\eq{curveint}{
g(t): = \diag \left( e^{a_1(t)}, \ldots, e^{a_m(t)}, e^{-b_1(t)}, \ldots, e^{-b_n(t)}  \right) 
.
}
It is not hard to equivalently restate the quasimultiplicativity properties of  $\alpha_i$ and $\beta_j$ in terms of functions $a_i$ and $b_j$.

\begin{observation}\label{obs21}
    Suppose $\alpha_i, \beta_j$ and $a_i, b_j$ are related via \eqref{ab_definition}. Then the following are equivalent:
    \begin{itemize}
        \item[\rm (i)] all the functions $\alpha_i$ and $\beta_j$ are quasimultiplicative; \item [\rm (ii)] there exist ${\sigma} > 0$ and $t_0 > 0$ such that
\begin{equation}\label{nice}
    \frac{a_i(t + \tau) - a_i(t)}{\tau} \geq {\sigma}   \text{ and } \,   \frac{b_j(t + \tau) - b_j(t)}{\tau} \geq {\sigma}    \text{ for any }     {t>0} 
    , \ \tau \geq t_0
 \end{equation}
for any $i = 1, \ldots, m$ and $j = 1, \ldots, n$.
    \end{itemize}

\end{observation}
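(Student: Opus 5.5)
The plan is to translate \eqref{quasireformulation} directly through the change of variables \eqref{ab_definition}, treating each function separately, and to use the normalization \eqref{lognormalizatiion} to supply the upper bound in \eqref{quasireformulation}, which \eqref{nice} does not state.

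First take $\alpha = \alpha_i$ with domain $I=(0,1]$ and $a=a_i$. For $t \ge 0$ and $\tau>0$, substitute $T = e^{-t-\tau}$ and $R = e^{\tau}$, so that $RT = e^{-t}$; both $T$ and $RT$ lie in $(0,1]$. Conversely, every pair $T, RT \in (0,1]$ with $R>1$ arises this way. Since $\alpha(e^{-s}) = e^{-a(s)}$, the inequality $R^{k_1}\alpha(T)\le \alpha(RT)$ becomes
\[
e^{k_1\tau - a(t+\tau)} \le e^{-a(t)},
\qquad\text{that is,}\qquad
a(t+\tau)-a(t) \ge k_1\tau .
\]
Similarly $\alpha(RT)\le R^{k_2}\alpha(T)$ becomes $a(t+\tau)-a(t)\le k_2\tau$. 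The condition $R\ge R_0$ corresponds to $\tau \ge \log R_0$.

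For $\beta=\beta_j$ with domain $[1,\infty)$ and $b=b_j$, put $T=e^{t}$ and $R=e^{\tau}$. Since $\beta(e^s) = e^{b(s)}$, the same computation gives
\[
k_1\tau \;\le\; b(t+\tau)-b(t) \;\le\; k_2\tau \qquad\text{for } \tau\ge \log R_0 .
\]
Hence quasimultiplicativity of a single function is equivalent to a two-sided linear increment bound for the corresponding $a$ or $b$, for all $\tau \ge \log R_0$.

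For (i)$\Rightarrow$(ii), take $\sigma$ to be the minimum of the constants $k_1$ over all $m+n$ functions, and $t_0$ the maximum of the values $\log R_0$. Only the lower bounds are needed here.

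For (ii)$\Rightarrow$(i), the lower bound with $k_1=\sigma$ and $R_0=e^{t_0}$ is immediate. The only real point is the upper bound, which is where \eqref{lognormalizatiion} enters. Since $\sum_{l} a_l(t)=t$, we have
\[
a_i(t+\tau)-a_i(t) = \tau - \sum_{l\ne i}\bigl(a_l(t+\tau)-a_l(t)\bigr) \le \tau ,
\]
because every increment is nonnegative (each $a_l$ is increasing, and in fact each increment is at least $\sigma\tau$). So $k_2=1$ works, and the same argument applies to the $b_j$.

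One minor bookkeeping issue remains: \eqref{nice} is stated only for $t>0$, whereas the endpoint $T$ or $RT$ equal to $1$ corresponds to $t=0$. This is handled by letting $t\to 0^+$. The functions $a_i$ and $b_j$ are continuous, being increasing bijections of $\R_{\ge0}$, so both increment bounds pass to the limit $t=0$.
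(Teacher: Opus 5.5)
Your proof is correct and follows the paper's argument: the same substitution ($T=e^{-(t+\tau)}$, $R=e^{\tau}$ for $\alpha_i$, and $T=e^{t}$, $R=e^{\tau}$ for $\beta_j$) converts \eqref{quasireformulation} into linear increment bounds, and the normalization $\sum_i a_i(t)=\sum_j b_j(t)=t$ supplies the upper bound. The differences are minor: you take $k_2=1$ where the paper uses the slightly sharper $k_2=1-(m-1)\sigma$, and you explicitly handle the endpoint $t=0$ (i.e.\ $RT=1$ or $T=1$) by continuity of the increasing bijections $a_i,b_j$, a point the paper passes over silently.
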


\begin{proof}
We will provide the proof for functions $\alpha_i$ and $a_i$; for $\beta_j$ and $b_j$ the proof is analogous.

    Suppose {$\alpha_i$ is quasimultiplicative; that is, there exist  {$R_0 > 1$ and   $k_1,k_2 >0$}  such that for any  $R \geq R_0$ and  $T,RT\in (0,1]$ one has $R^{k_1} \alpha_i(T) \leq \alpha_i(RT) \leq R^{k_2} \alpha_i(T)$.}
 Then {for any $t>0$ one has} 
    $$
    a_i(t+\tau) - a_i(t) = {-\log \alpha_i(e^{-(t+\tau)}) + \log \alpha_i(e^{-t}) = \log \left( \frac{\alpha_i(e^{ \tau}e^{-(t+\tau)})}{\alpha_i(e^{-(t+\tau)})} \right)} \geq \log e^{\tau k_1} = k_1 \tau,
    $$
    {hence} \eqref{nice} holds {for $a_i$} with ${\sigma} = k_1$ for $\tau \geq t_0: = \log R_0$. 

{To prove the converse, first} notice that  
{\eqref{nice} and \equ{lognormalizatiion} imply} that ${\sigma}n \leq 1$ and ${\sigma}m \leq 1$, thus {for any $i = 1, \ldots, m$ and $ j = 1, \ldots, n$
one has} \begin{equation}\label{ecin}
    \frac{a_i(t + \tau) - a_i(t)}{\tau} \leq 1-(m-1){\sigma} \,\,\,\, 
    \text{ for any }   {t >0},\ \tau \geq t_0.
\end{equation}
    Now suppose $a_i$ is such that {\eqref{nice} 
    holds. Take $R \geq R_0: = e^{t_0}$.} 
    Then for any $T,RT\in (0,1]$ one has
    $$
    \begin{aligned}\alpha_i(RT) = e^{-a_i(-\log R - \log T)} = e^{-a_i(-\log R - \log T) + a_i(-\log T) - a_i(-\log T)} \\  \geq e^{{\sigma} \log R} e^{-a_i(-\log T)} = R^{{\sigma}} \alpha_i(T).\end{aligned}
    $$
    By \eqref{ecin}  we get the complementary inequality $$\alpha_i(RT) \leq R^{1-(m-1){\sigma}} \alpha_i(T),$$ proving \eqref{quasireformulation} with $k_1 = {\sigma}$ and $k_2 = 1 - (m-1){\sigma}$.
\end{proof}

{Our next observation is that, under the quasimultiplicativity assumption on $\alpha_i$ and $\beta_j$}, 
 Theorem~\ref{main} can be applied {with $F = F^{\alf, \betf}$}. {Let us consider $U = \{ u_{\Theta}: \Theta \in \mr \}$, where $u_{\Theta}$ is as in \equ{defua}, and {put} $\fu = \Lie(U)$}.

\begin{observation}\label{obs22}
    Suppose 
    {the functions $\alpha_i$ and $\beta_j$ are quasimultiplicative.}
    Then there exists a cone $\fs$ with $\overline{\fs} \subset \fs_{\fu}^+ \cup \{ 0 \}$  such that ${F^{\alf, \betf}}$
    is a quasi-ray relative to $\fs$. 
\end{observation}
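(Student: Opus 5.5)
We will reduce the claim to Lemma~\ref{from_s_to_sc}, implication \ref{stosc3} $\Rightarrow$ \ref{stosc1}, with $\fc = \fs_\fu^+$. By Example~\ref{exampleua}, $\fs_\fu^+$ is the cone \equ{aplus}. Write $\bw(t) := \log g(t) = \diag\big(a_1(t),\dots,a_m(t),-b_1(t),\dots,-b_n(t)\big)\in\fa$ with $g(t)$ as in \equ{curveint}. Let $t_0,\sigma$ be as in Observation~\ref{obs21}(ii), and choose a step $\tau \ge t_0$, to be fixed shortly. Define the sequence $\mathscr{V} = (\bv_k)$ by $\bv_k := \bw(k\tau)$, so that $\bv_0 = \bw(0) = 0$, since the $a_i,b_j$ are bijections of $\R_{\ge0}$ and hence vanish at $0$.

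First, $F^{\alf,\betf}$ lies in a bounded thickening of $\exp(\mathscr{V})$. For $t\in[(k-1)\tau,k\tau]$, monotonicity gives $|a_i(t)-a_i(k\tau)|\le a_i(k\tau)-a_i((k-1)\tau)$. By \eqref{ecin}, applied with increment $\tau\ge t_0$, this is at most $\tau$, and similarly for the $b_j$. Since $A$ is abelian and $\exp\colon\fa\to A$ is an isometry, $\dist(g(t),\exp\bv_k)\le |\bw(t)-\bv_k|$. This quantity is bounded uniformly in $t$, because all Euclidean norms on the diagonal subalgebra are equivalent to the Killing norm.

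Second, we check condition \ref{stosc3}. Each difference $\bv_k-\bv_{k-1}$ has diagonal entries $a_i(k\tau)-a_i((k-1)\tau)$ and $-(b_j(k\tau)-b_j((k-1)\tau))$. By \equ{lognormalizatiion} the $a$-entries sum to $\tau$, as do the $b$-entries, so the difference lies in the subspace containing \equ{aplus}. By \eqref{nice} each $a$-entry and each $b$-entry has absolute value at least $\sigma\tau$, and by \eqref{ecin} at most $\tau$. Hence the differences are uniformly bounded and lie in the compact set
\[
D := \Big\{\diag(x_1,\dots,x_m,-y_1,\dots,-y_n): x_i,y_j\in[\sigma\tau,\tau],\ \textstyle\sum x_i=\sum y_j\Big\}.
\]
Moreover $D\subset\fs_\fu^+$, because each point of $D$ has strictly positive $x_i,y_j$. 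Since $\lfloor\cdot\rfloor_{\fs_\fu^+}$ is continuous and positive on the open cone $\fs_\fu^+$, it has a positive minimum on $D$, so $\rho_{\fs_\fu^+}(\mathscr{V})>0$. Equivalently one may use the explicit functionals: the walls of \equ{aplus} are $x_i=0$ and $y_j=0$, so $\lfloor\bv\rfloor\asymp\min(x_i,y_j)\ge\sigma\tau$.

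Lemma~\ref{from_s_to_sc} now yields a convex open cone $\fs$ with $\overline{\fs}\subset\fs_\fu^+\cup\{0\}$ in which $\mathscr{V}$ is a quasi-ray. Hence $F^{\alf,\betf}$ is a quasi-ray relative to $\fs$, which proves the claim.

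The main point requiring care is obtaining both the lower bound $\sigma\tau$ and the upper bound $\tau$ on the increments. This is exactly why one samples at multiples of $\tau\ge t_0$ rather than using arbitrary points of the curve: \eqref{nice} gives no control over short time intervals.
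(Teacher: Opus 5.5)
Your proposal is correct and is essentially the paper's own proof. The paper also samples the curve at multiples of $t_0$ (you allow any step $\tau\ge t_0$), takes the lower bound $\sigma t_0$ on each coordinate increment from \eqref{nice} and the upper bound $t_0$ from the normalization \equ{lognormalizatiion}, applies Lemma~\ref{from_s_to_sc} (iii)$\Rightarrow$(i), and bounds $\dist(g(t),\exp\bv_k)$ by monotonicity on $[(k-1)t_0,kt_0]$. The only cosmetic difference is how $\rho_{\fs_\fu^+}(\mathscr{V})>0$ is obtained: you use compactness (or the coordinate functionals up to a constant), while the paper computes the Killing norms of the defining functionals $\lambda_i$ explicitly.
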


\begin{proof}
    { Recall from Example \ref{exampleua} that the cone  $\fs_{\fu}^+ = \fa_{\fu}^+ $ has the form \equ{aplus}. 
     {In other words,}}
    $$\fs_{\fu}^+ = \{ \pmb{y}:=\diag(y_1, \ldots, y_d) \in \fa: \lambda_i(\pmb{y}) > 0 \ \text{for} \ i = 1, \ldots, d  \}, $$
    where  $\fa$ is the diagonal subalgebra of $\mathfrak{sl}_d(\R)$ and {$$\lambda_i(\pmb{y}) = \begin{cases} y_i&\text{ if }  i =1, \ldots, m;\\ -y_{i}&\text{ if }  i =m+1, \ldots, d. \end{cases}$$} Note that for the norm $|\cdot|$ given by the Killing form on $\fa$ one has \eq{killingsld}{|\pmb{y}|^2 = 2d\sum_{i=1}^dy_i^2\  \Longrightarrow   \  |\lambda_{i}| = \frac{\sqrt{d-1}}{d\sqrt{2}}\text{ for any $i$}} 
    {(the implication above is an exercise left to the reader).}

     {Now let us switch from $\alpha_i,\beta_j$ to $a_i,b_j$ using \eqref{ab_definition}, and, using Observation \ref{obs21}, 
    take ${\sigma},t_0 > 0$ such that
\eqref{nice} holds
for any $i = 1, \ldots, m$ and $j = 1, \ldots, n$.    Then define $\mathscr{V} = (\bv_k)_{k\in\N}$, where
     $$\bv_k: = \diag\big(a_1(kt_0), \ldots, a_m(kt_0), -b_1(kt_0), \ldots, -b_n(kt_0)\big).$$
    Using \eqref{nice}, one gets
    $$
    \lambda_i(\bv_k - \bv_{k-1})  = a_i(kt_0) - a_i\big((k-1)t_0\big) \geq \sigma t_0$$
    and
    $$
    \lambda_{m+j}(\bv_k - \bv_{k-1})= b_j(kt_0) - b_j\big((k-1)t_0\big) \geq  \sigma t_0,
    $$
     {thus, in view of \equ{killingsld},} $\rho_{\fs_{\fu}^+}(\mathscr{V})\geq  \sigma t_0
    {\frac{d\sqrt{2}}{\sqrt{d-1}}}$. 
    Also 
    for any $k\in\N$ one has 
\begin{equation}\label{norm_small_increment}
\begin{aligned}
0 \le a_i(kt_0) - a_i\big((k-1)t_0\big) \le &\sum_{i=1}^m \Big(a_i(kt_0) - a_i\big((k-1)t_0\big)\Big) \\ \underset{\equ{lognormalizatiion}}= &t_0 \text{ for each } i = 1,\dots,m,    
\end{aligned}
    \end{equation}
    and similarly $b_j(kt_0) - b_j\big((k-1)t_0\big)\le t_0$ for each $j = 1,\dots,n$, which, again in view of \equ{killingsld}, implies that
    $|\bv_k - \bv_{k-1}| \leq d\sqrt2  t_0$. Hence, 
    by Lemma \ref{from_s_to_sc}, 
    there exists a cone $\fs$ with $\overline{\fs} \subset \fs_{\fu}^+ \cup \{ 0 \}$  such   that $\mathscr{V}$ is a quasi-ray in $\fs$.}

    {Now recall  that   $F^{\alf, \betf}$ can be written  in the form $\{g(t): t\ge 0\}$ as in 
\equ{curveint}. {Observe that, 
     as long as $(k-1)t_0 < t \leq kt_0$, by an argument similar to \eqref{norm_small_increment} one gets
     $$0 \leq a_i(kt_0) - a_i(t) \ \leq t_0 \ \text{for} \  i = 1, \ldots, m \ \text{and} \ 0 \leq b_j(kt_0) - b_j(t) \ \leq t_0 \ \text{for} \  j = 1, \ldots, n.$$
     Thus 
        $$\dist\big(\exp \bv_k, g(t)\big) =|\bv_k -\diag \big( {a_1(t)}, \ldots, {a_m(t)}, {-b_1(t)}, \ldots, {-b_n(t)}  \big) |\leq d \sqrt{2} t_0.$$ 
     This proves that $F^{\alf, \betf}$ is contained in the $(d \sqrt{2} t_0)$-neighborhood of $\exp(\mathscr{V})$.} 
     }
\end{proof}

Now we will show that, assuming the quasimultiplicativity of  all the weight functions,
$(\alf, \betf)$-bad approximability of {an $m\times n$ matrix} $\Theta$ is equivalent to a certain trajectory in {$X_d$} being bounded. The  {statement below}, which is a generalization of {the Dani} correspondence, directly follows (via the reparametrization described above) from \cite[Proposition 3.4]{N25}. We provide a separate proof for clarity and completeness.

\begin{proposition}\label{Dani_corr}
    Suppose all the functions $\alpha_i$ and $\beta_j$ are quasimultiplicative, {and let $g(t)$ be as in \equ{curveinT}}. Then $\Theta \in$ \baab\ if and only if the trajectory 
    $
    \left\{ g(t) {u_{\Theta} \Z^d}\right\}_{t \geq 0}
    $
    is bounded in $X_d$.
\end{proposition}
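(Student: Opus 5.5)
Both directions go through Mahler's compactness criterion: a set of unimodular lattices is bounded in $X_d$ if and only if there is $\delta>0$ such that no lattice in the set has a nonzero vector of sup-norm less than $\delta$. The lattice $g(t)u_\Theta\Z^d$ consists of the vectors
$$
\Big(e^{a_1(t)}(\Theta_1\vq-p_1),\ldots,e^{a_m(t)}(\Theta_m\vq-p_m),\ e^{-b_1(t)}q_1,\ldots,e^{-b_n(t)}q_n\Big),\qquad (\p,\vq)\in\Z^m\times\Z^n.
$$
By \eqref{ab_definition}, the inequality $e^{a_i(t)}|\Theta_i\vq-p_i|<\delta$ says $|\Theta_i\vq-p_i|<\delta\,\alpha_i(e^{-t})$, and $e^{-b_j(t)}|q_j|<\delta$ says $|q_j|<\delta\,\beta_j(e^t)$. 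The whole proof consists of trading the factor $\delta$ between the two forms: scaling the arguments $T$ or $1/T$ on one side, and scaling the values on the other. Quasimultiplicativity, in the form \eqref{quasireformulation} or equivalently Observation~\ref{obs21}, lets us do this, with two cases: $\vq=0$ and $\vq\neq0$.

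For ($\Leftarrow$), suppose $\Theta\notin\baab$. Then for every $\varepsilon>0$ the system \eqref{Dirichlet_hom_fcns} with $f=\varepsilon f_1$ has a nonzero solution $\vq$ for arbitrarily large $T$. We would like to choose $t$ so that both sets of coordinates of the corresponding lattice vector become small. Put $\beta_j(e^{t})=R\beta_j(T)$ for a large constant $R$. By \eqref{quasireformulation} for the $\beta_j$, this requires $e^t\le R^{1/k_1}T$ for each $j$. Take $t=\log T+\tau$ with $\tau=k_1^{-1}\log R$, which makes $|q_j|e^{-b_j(t)}\le 1/R$. On the other side, $\alpha_i(\varepsilon/T)\le \varepsilon^{k_1'}\alpha_i(1/T)$ once $\varepsilon\le 1/R_0$. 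In the same way $\alpha_i(e^{-t})=\alpha_i(e^{-\tau}/T)\ge e^{-\tau k_2}\alpha_i(1/T)$. Hence
$$
e^{a_i(t)}\|\Theta_i\vq\|\le \varepsilon^{k_1}e^{\tau k_2}.
$$
Fix $R$ first, then send $\varepsilon\to0$. Both coordinates become small, so the lattice has arbitrarily short nonzero vectors and the trajectory is unbounded. (The vector is nonzero since $\vq\ne0$.)

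For ($\Rightarrow$), suppose the trajectory is unbounded. Then for every $\delta$ there are $t\ge0$ and a nonzero $(\p,\vq)$ with all coordinates of sup-norm below $\delta$. If $\vq=0$, then $p_i=\Theta_i\vq-p_i$ is an integer, so $|p_i|<\delta\alpha_i(e^{-t})\le\delta<1$ is impossible; hence $\vq\neq0$. This also forces $\beta_j(e^t)>1/\delta$ for some $j$, so $t\to\infty$ as $\delta\to0$. Now set $T=e^{t-\tau}$ with $\tau$ chosen so that $\delta\beta_j(e^t)\le\beta_j(T)$. Using $\beta_j(e^\tau T)\le e^{\tau k_2}\beta_j(T)$, it suffices to take $e^{\tau k_2}=1/\delta$. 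Then $|q_j|\le\beta_j(T)$ for all $j$. Moreover $\|\Theta_i\vq\|<\delta\,\alpha_i(e^{-\tau}/T)\le \delta\, e^{-\tau k_1}\alpha_i(1/T)$ in the convention where $\alpha_i$ decreases as its argument shrinks; more carefully, one compares via $\alpha_i(1/T)\ge e^{\tau k_1}\alpha_i(e^{-\tau}/T)$. So $\|\Theta_i\vq\|\le\alpha_i(1/T)\cdot\delta$. Finally, $\delta\alpha_i(1/T)\le\alpha_i(\varepsilon/T)$ with $\varepsilon=\delta^{1/k_2}$ by the upper bound in \eqref{quasireformulation}. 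So $\vq$ solves the system with $f=\varepsilon f_1$ at $T$, and $T\to\infty$. Since $\varepsilon\to0$ as $\delta\to0$, $\Theta$ is not badly approximable.

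I expect the main obstacle to be the bookkeeping in these two conversions. The exponents in \eqref{quasireformulation} apply only for $R\ge R_0$ and only inside the domains $(0,1]$ and $[1,\infty)$. The plan is to absorb all such constraints into the choice of $\tau\ge t_0$ as in Observation~\ref{obs21}, working throughout in the additive variables $a_i,b_j$. In those variables every step becomes a linear inequality of the form $\sigma\tau\le a_i(t+\tau)-a_i(t)\le\tau$, where the upper bound comes from \eqref{lognormalizatiion}, as in \eqref{ecin}. I would rewrite both directions in these terms to avoid the $k_1,k_2$ juggling.
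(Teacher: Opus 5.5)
Your approach is the paper's: Mahler's criterion turns boundedness into the nonexistence of solutions of the $\delta$-scaled system (the paper's \eqref{almost_bad}). Rescaling $T$ by a factor $e^{\tau}$ and applying \eqref{quasireformulation} then moves $\delta$ between the $\alpha$-side and the $\beta$-side. Your ($\Leftarrow$) direction is correct. One slip: what you need is $\beta_j(e^t)\ge R\beta_j(T)$, which holds once $e^t\ge R^{1/k_1}T$ (not $\le$) and $R^{1/k_1}\ge R_0$. Since you then take equality, nothing breaks.

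In ($\Rightarrow$), one step does not follow as written: ``and $T\to\infty$''. You only show $t\to\infty$. But your rescaling parameter $\tau=k_2^{-1}\log(1/\delta)$ also blows up as $\delta\to0$, so $T=e^{t-\tau}$ need not grow. You also never check $T\ge1$, which you need just to apply \eqref{quasireformulation} on $[1,\infty)$ and $(0,1]$. This matters, because $\Theta\notin\baab$ requires, for each fixed $\varepsilon$, solutions for an \emph{unbounded} set of $T$.

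The clean repair is the one built into the paper's formulation. The initial segment $\{g(t)u_\Theta\Z^d:0\le t\le t_1\}$ is compact, since $t\mapsto g(t)$ is continuous (the $\alpha_i,\beta_j$ are monotone bijections). So if the trajectory is unbounded, then for each \emph{fixed} $\delta$ short vectors occur at arbitrarily large $t$. With $\delta$ fixed, $\tau$ and $\varepsilon=\delta^{1/k_2}$ are fixed, so $T=e^{t-\tau}$ is arbitrarily large (in particular $\ge1$). Your inequalities then show that $\Theta$ is $(\varepsilon f_1,\alf,\betf)$-approximable.

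An alternative repair: $\|\Theta_i\vq\|<\delta$ forces $|\vq|\to\infty$ as $\delta\to0$, unless $\Theta\vq_0\in\Z^m$ for some $\vq_0\ne0$. In that case $\Theta\notin\baab$ trivially. Otherwise $|q_j|\le\beta_j(T)\le T$ forces $T\to\infty$.

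The paper sidesteps the issue differently in this direction. It keeps the rescaling factor fixed ($T'=T/R_0$) and absorbs all of $\delta$ into $\varepsilon=\delta^{1/k_2}/R_0$ via \eqref{alpha_equiv1}. Also, the bound $\alpha_i(e^{-\tau}/T)\le\alpha_i(1/T)$ you use there is plain monotonicity of $\alpha_i$; no quasimultiplicativity is needed for it.
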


\begin{proof} 
{Since there are finitely many functions $\alpha_i$ and $\beta_j$, we will assume that {the constants $R_0,k_1,k_2$} 
are chosen in such a way that \eqref{quasireformulation} holds for $h$ replaced by any of these functions.}
    {By Mahler's Compactness Criterion (see \cite[Theorem IV, \S V.4.2]{Cas}),}
    the trajectory $\left\{ g(t) {u_{\Theta} \Z^d} \right\}_{t \geq 0}$ is bounded if and only if there exists $\delta > 0$ such that 
    $$
    \left|  {\begin{pmatrix}
       \frac1{\alpha_1(e^{-t})}
        &&&&& \\
        & \ldots &&&& \\
        && \frac1{\alpha_m(e^{-t})}
        &&& \\
        &&& \frac1{\beta_1(e^{t})}
        && \\
        &&&& \ldots & \\
        &&&&& \frac1{\beta_n(e^{t})}
    \end{pmatrix}}\begin{pmatrix}
    1_m & \Theta \\ & 1_n
\end{pmatrix} \begin{pmatrix}
    \p \\ \vq
\end{pmatrix} \right| > \delta
    $$
    for any $\p \in \Z^m, \vq \in \Z^n \nz$ {and large enough $t$.}  {Recall that the norm here is the supremum norm, and note that we do not need to care about the vectors for which $\vq = 0$, since for $\vq = 0$ and any nonzero $\p$ the norm above is at least $1$. Hence, with the substitution $T = e^t$ and 
    the notation $f_1(T) = 1/T$ defined in \eqref{Dirichlet_f1}}, the {above condition} is equivalent to the following: 
\begin{equation}\label{almost_bad}
{\begin{aligned}
\text{there exists $\delta > 0$  such that the system}\  \begin{cases}
        |\Theta_i \vq + \p| &\leq \delta \alpha_i\big(f_1(T)\big) \\ \qquad\ \, |\vq_j| &\leq \delta \beta_j(T)
    \end{cases} \\  \text{has no solutions $ \vq \in \Z^n \nz, \p \in \Z^m$  for all    large enough $T$.}\qquad
    \end{aligned}}
    \end{equation}

    Let $R \geq R_0$, and let $T' : = \frac{T}{R}$. Using \eqref{quasireformulation}, we obtain the following chains of inequalities:
\begin{equation}\label{beta_equiv}
    \delta R^{k_1} \beta_j(T') \leq \delta \beta_j(RT') = \delta \beta_j(T) \leq  \delta R^{k_2} \beta_j(T'),
    \end{equation}
    and, assuming that $\frac{1}{\delta^{{{1}/}{k_1}}}, \frac{1}{\delta^{{{1}/}{k_2}}} \geq R_0$,
\begin{equation}\label{alpha_equiv1}
        \delta \alpha_i \left( f_1(T) \right) = \delta \alpha_i \big( f_1(RT') \big) = \delta \alpha_i \left( \frac{1}{\delta^{{{1}/}{k_2}}} \cdot \frac{\delta^{{{1}/}{k_2}}}{R}f_1(T') \right) \leq  \alpha_i \left( \frac{\delta^{{{1}/}{k_2}}}{R}f_1(T') \right) 
    \end{equation}
    {and}
\begin{equation}\label{alpha_equiv2}
        \delta \alpha_i \big( f_1(T) \big) = \delta \alpha_i \big( f_1(RT') \big) = \delta \alpha_i \left( \frac{1}{\delta^{{{1}/}{k_1}}} \cdot \frac{\delta^{{{1}/}{k_1}}}{R}f_1(T') \right) \geq  \alpha_i \left( \frac{\delta^{{{1}/}{k_1}}}{R}f_1(T') \right).
    \end{equation}

    Suppose \eqref{almost_bad} holds; we can assume $\delta$ to be small enough, so that $\frac{1}{\delta^{{{1}/}{k_1}}} \geq R_0$. Choose $R \geq R_0$ large enough so that $\delta R^{k_1} \geq 1$. Using \eqref{beta_equiv} and \eqref{alpha_equiv2}, we conclude that the system of inequalities
    $$
    \begin{cases}
        |\Theta_i \vq + \p| &\leq   \alpha_i \left( \frac{\delta^{{{1}/}{k_1}}}{R}f_1(T') \right) \\ |\vq_j| &\leq \beta_j(T')
    \end{cases}
    $$
    has no solutions $ \vq \in \Z^n \nz$, $\p \in \Z^m$ for all  $T'$  large enough, that is, $\Theta$ is not $ \left(  \frac{\delta^{{{1}/}{k_1}}}{R} f_1, \alf, \betf \right)$-approximable, and in particular $\Theta \in$ \baab\ .
    
    Now suppose \eqref{almost_bad} does not hold. {Then} for any $\delta > 0$ such that $\delta R_0^{k_2} \leq 1$ and $\frac{1}{\delta^{{{1}/}{k_2}}} \geq R_0$ there exists an unbounded set of $T$ such that 
    $$
    \begin{cases}
        |\Theta_i \vq + \p| \leq  \delta \alpha_i\big(f_1(T)\big) \leq \alpha_i \left( \frac{\delta^{{{1}/}{k_2}}}{R_0}f_1(T') \right) \\ |\vq_j| \leq  \delta \beta_j(T) \leq  \delta R_0^{k_2} \beta_j(T') \leq \beta_j(T')
    \end{cases}
    $$
    where $T' = \frac{T}{R_0}$, and the last inequalities in each line follow from \eqref{beta_equiv} and \eqref{alpha_equiv1}. Therefore, for any $\delta > 0$ the matrix $\Theta$ is $\left(\frac{\delta^{{{1}/}{k_2}}}{R_0}f_1, \alf, \betf \right)$-approximable, and in particular \linebreak $\Theta \notin$ \baab.
\end{proof}

{It is worthwhile to point out that the} quasimultiplicativity {assumption} is necessary for Proposition \ref{Dani_corr} to hold, {as the next example indicates}.

\begin{example}
    Let 
    $$U = \left\{{u_{\Theta} = \begin{pmatrix}
        1_2 & \Theta \\   & 1_2
    \end{pmatrix}: \Theta   \in M_{2,2}
    }\right\}
    $$
    be a horospherical subgroup in $\SL_4(\R)$. Let $l(x): = 1 + \log x$, and take $$\alpha_1(x) = x l \left( \frac{1}{x} \right), \ \beta_1(T) = T/l(T)\quad \text{and}\quad \alpha_2(x) = \frac{1}{l \left( \frac{1}{x} \right)}, \ \beta_2(T) = l(T).$$ The functions $\alpha_2$ and $\beta_2$ are not quasimultiplicative. In this example  we have \linebreak $g(t) = \diag\left(\frac{e^t}{1+t}, 1+t, \frac{1+t}{e^t}, \frac{1}{1+t}\right)$. 

    Let $\theta_1, \theta_2$ be badly approximable numbers, and let $\Theta = 
    \diag(\theta_1, \theta_2)$. Then: 
    \begin{itemize}
        \item The trajectory $
    \left\{ g(t) u_{\Theta} \Z^4 \right\}_{t \geq 0}
    $
    is bounded in $X_4$. Indeed, the lattice $g(t) u_{\Theta} \Z^4$ is a direct product of two lattices: 
    $$
    \Lambda_1(t) = \begin{pmatrix}
        \frac{e^t}{1+t} &   \\   & \frac{1+t}{e^t}
    \end{pmatrix} \begin{pmatrix}
        1 & \theta_1 \\   & 1
    \end{pmatrix}\Z^2 \ \ \ \  \text{and} \ \ \ \ \Lambda_2(t) = \begin{pmatrix}
        1+t &   \\   & \frac{1}{1+t}
    \end{pmatrix} \begin{pmatrix}
        1 & \theta_2 \\   & 1
    \end{pmatrix}\Z^2.
    $$
    Since $\theta_1$ and $\theta_2$ are both badly approximable, {the} trajectories $\Lambda_1(t)$ and $\Lambda_2(t)$ are bounded, hence their product is bounded as well.
    
    \item $\Theta \notin$ \baab. Let us fix ${\varepsilon} > 0$ and show that $\Theta$ is $({\varepsilon}f_1, \alf, \betf)$-approximable. Note that the system of inequalities \eqref{Dirichlet_hom_fcns} with $f = {\varepsilon}f_1$ decouples into two independent systems
    \begin{equation}\label{sl4_example_eq}
    \begin{cases}
        |\theta_1 q_1 + p_1| \leq \frac{{\varepsilon}}{T} l \left( \frac{T}{{\varepsilon}} \right), \\
        |q_1| \leq \frac{T}{l(T)}
    \end{cases} \ \ \ \ \text{and} \ \ \ \ \begin{cases}
        |\theta_2 q_2 + p_2| \leq \frac{1}{l\left( \frac{T}{{\varepsilon}} \right)} = \frac{1}{l(T) - \log {\varepsilon}}, \\
        |q_2| \leq l(T).
    \end{cases}
    \end{equation}

    Let $(-p_k, q_k)$ be the sequence of best approximations for $\theta_2$; then, 
    $$
    |\theta_2 q_k + p_k| < \frac{1}{q_{k+1}} \ \ \ \ \text{and} \ \ \ \ q_{k+1} - q_k \rightarrow \infty.
    $$
    Let $T_k : = l^{-1}(q_k)$. Suppose $k$ is large enough to guarantee that $q_{k+1} - q_k > - \log {\varepsilon}$. Then we have $$|\theta_2 q_k + p_k| \leq \frac{1}{q_{k+1}} \leq \frac{1}{q_k - \log {\varepsilon}} = \frac{1}{l(T_k) - \log {\varepsilon}} =  \frac{1}{l\left(  {T_k}/{{\varepsilon}} \right)}.$$ Since $|q_k| \leq l(T_k) = q_k$, we conclude that for any large enough $k$ and $T = T_k$ the pair $(p_2, q_2) = (-p_k, q_k)$ is a solution for the second system in \eqref{sl4_example_eq}. It is clear that the trivial pair   $(p_1, q_1) = (0,0)$ is a solution for the first system for any $T$. Therefore, for any large enough $k$ and $T = T_k$ there exists a nonzero solution $(p_1, p_2, q_1, q_2) = (0, -p_k, 0, q_k)$ for the combined system \eqref{sl4_example_eq} of inequalities, proving that $\Theta$ is $({\varepsilon}f_1, \alf, \betf)$-approximable.
    \end{itemize}  
\end{example}

\smallskip

{We conclude the section by observing  that} Theorem \ref{badisfull} directly follows from Theorem \ref{main} via 
Observation \ref{obs22} and Proposition \ref{Dani_corr}.

\begin{proof}[Proof of Theorem \ref{badisfull}]
    If all the functions $\alpha_i$ and $\beta_j$ are quasimultiplicative, 
    Observation~\ref{obs22} yields that the set $F^{\alf, \betf}$ is a quasi-ray relative to some cone $\fs$ such that $\overline{\fs} \subset \fs_{\fu}^+ \cup \{ 0 \}$. By Theorem \ref{main}, the set 
\begin{equation}\label{thick_for_matrices}
    \left\{ u \in U: u\Z^d \in {\mathcal B}(F^{\alf, \betf}) \right\}
    \end{equation}
    is thick in $U$. {Consequently,} {since the map $\mr \rightarrow U, \ \Theta \mapsto u_{\Theta}$ is locally bi-Lipschitz, the set $\{ \Theta \in \mr: u_{\Theta} \in \eqref{thick_for_matrices}  \}$ is thick in $\mr$}. It remains to use Proposition \ref{Dani_corr} to notice that $u_{\Theta}$ belongs to the set \eqref{thick_for_matrices} if and only if $\Theta \in$ \baab.
\end{proof}

\appendix
\section{More about expanding cones}\label{cone_section}

In the Appendix we take $\fu$ to be a Lie algebra of a horospherical subgroup $U = U_+(b)$ of a connected semisimple group $H$, let $\fa = \Lie(A)$, where $A$ is a maximal $\Ad$-diagonalizable subgroup of $H$ containing  {$b$, and} discuss  properties of algebraically/geometrically expanding and super-expanding cones $\fa_{\fu}^+$, $\fe_{\fu}^+$ and  $\fs_{\fu}^+$ in more detail. 

\subsection{Simplified presentation of cones}

Recall that the cones $\fe_{\fu}^+$ and $\fa_{\fu}^+$ were defined using  the root system
$\Phi_\fu : = \{ \lambda \in \Phi: \fh_{\lambda} \subseteq \fu \}$, 
    see \eqref{au_big} and \eqref{eu}.
    It will be useful to simplify the definition of both cones by throwing away unnecessary roots. Namely, 
    let ${\Delta_\fu} \subseteq \Phi_\fu $ be the (uniquely defined) minimal set of additive generators of $\Phi_\fu $, and recall that for $\lambda\in\Phi_\fu  $ we defined a corresponding element $s_\lambda\in \fa$ via \eqref{s_alpha_def}. Let us now show that in  \eqref{au_big} and \eqref{eu} one can replace $\Phi_\fu  $ by $\Delta_\fu$. 
    \begin{lemma}\label{minimal_general} One has
\begin{equation}\label{a_e_minimal}
        \fe_{\fu}^+: = \{ {\bv} \in \fa: \lambda({\bv}) > 0 \,\, \text{for any} \,\, \lambda \in {\Delta_\fu} \} \ \ \ \ \text{and} \ \ \ \ \fa_{\fu}^+: = \left\{ \sum\limits_{\lambda \in {\Delta_\fu}} t_{\lambda}{\bv}_{\lambda}: t_{\lambda} > 0 \right\}.
    \end{equation}
    \end{lemma}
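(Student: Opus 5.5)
Both equalities will follow from one fact: every $\lambda\in\Phi_\fu$ can be written as a nonnegative integer combination $\lambda=\sum_{\mu\in\Delta_\fu} n_\mu\mu$ with $n_\mu\in\Z_{\ge0}$, not all zero. This is what "$\Delta_\fu$ is a set of additive generators of $\Phi_\fu$" means. Such generators exist and are unique for the following reason. $\Phi_\fu$ is contained in the set of positive roots for some ordering, since $\Phi_\fu\subset\{\lambda:\lambda(\log b)>0\}$. Inside that set, the elements of $\Phi_\fu$ that are not sums of two elements of $\Phi_\fu$ form the unique minimal generating set, and induction on the height $\lambda(\log b)$ shows that every element of $\Phi_\fu$ is a sum of such indecomposables. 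I will take this structural fact as the definition of $\Delta_\fu$. I will also use that $\Delta_\fu\subseteq\Phi_\fu$.

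For $\fe_\fu^+$, one inclusion is trivial: if $\lambda(\bv)>0$ for all $\lambda\in\Phi_\fu$, then in particular this holds for all $\lambda\in\Delta_\fu$. Conversely, suppose $\mu(\bv)>0$ for every $\mu\in\Delta_\fu$. Write $\lambda\in\Phi_\fu$ as $\sum n_\mu\mu$ with $n_\mu\ge0$ and some $n_\mu\ge1$. Then $\lambda(\bv)=\sum n_\mu\mu(\bv)>0$. So both descriptions give the same cone.

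For $\fa_\fu^+$, I will first note that $\lambda\mapsto\bv_\lambda$ is linear by \eqref{s_alpha_def}. Hence $\bv_\lambda=\sum_{\mu\in\Delta_\fu}n_\mu\bv_\mu$. The inclusion $\supseteq$ needs care, because the sum in \eqref{au_big} requires $t_\lambda>0$ for \emph{every} $\lambda\in\Phi_\fu$. Take $\bv=\sum_{\mu\in\Delta_\fu}s_\mu\bv_\mu$ with all $s_\mu>0$. Choose a small $\epsilon>0$ and put $t_\lambda=\epsilon$ for every $\lambda\in\Phi_\fu\smallsetminus\Delta_\fu$. Then set $t_\mu=s_\mu-\epsilon\sum_{\lambda\notin\Delta_\fu}n_\mu(\lambda)$ for $\mu\in\Delta_\fu$. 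This is positive once $\epsilon$ is small, and $\sum_{\lambda\in\Phi_\fu}t_\lambda\bv_\lambda=\bv$. For $\subseteq$, expand each $\bv_\lambda$ in \eqref{au_big} in terms of the $\bv_\mu$. The coefficient of $\bv_\mu$ is then at least $t_\mu>0$, because $\mu\in\Delta_\fu\subseteq\Phi_\fu$ contributes itself and every other contribution is nonnegative.

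The main obstacle is only to state the decomposition property of $\Delta_\fu$ precisely, in particular its existence. The argument is the height induction described above: by \eqref{rootdecomp}, $\Phi_\fu$ lies in a positive system determined by $b$, and the elements of $\Phi_\fu$ that are not decomposable within $\Phi_\fu$ generate it. Once that is in place, both identities are the elementary convexity computations above.
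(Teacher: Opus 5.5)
Your proposal is correct and is essentially the paper's proof: the $\fe_{\fu}^+$ equality via nonnegative integer expansions over $\Delta_{\fu}$, the inclusion of the $\Phi_{\fu}$-cone into the $\Delta_{\fu}$-cone via additivity of $\lambda\mapsto\bv_\lambda$, and the reverse inclusion by moving a small $\epsilon$ from the generator coefficients onto the remaining roots, which you do for all $\lambda\in\Phi_{\fu}\smallsetminus\Delta_{\fu}$ at once rather than one root at a time as the paper does. As in the paper, only $\Delta_{\fu}\subseteq\Phi_{\fu}$ and the nonnegative-integer generating property are used, so your side identification of $\Delta_{\fu}$ with the elements of $\Phi_{\fu}$ that are not sums of two elements of $\Phi_{\fu}$ is not needed (you assert its minimality but do not prove it).
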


\begin{proof}
    The inclusion 
    $$\{ {\bv} \in \fa: \lambda({\bv}) > 0 \,\, \text{for any} \,\, \lambda \in \Phi_{\fu} \} \subseteq \{ {\bv} \in \fa: \lambda({\bv}) > 0 \,\, \text{for any} \,\, \lambda \in {\Delta_\fu} \}$$
    follows from the inclusion ${\Delta_\fu} \subseteq \Phi_\fu $, and the opposite inclusion holds since every element in $\Phi_{\fu}$ is a linear combination of elements in ${\Delta_\fu}$ with nonnegative coefficients (some of which are strictly positive). This  proves the 
    $\fe_{\fu}^+$-part of \eqref{a_e_minimal}.
    
    For $\fa_{\fu}^+$, the proof is also straightforward. If $\lambda = \mu + \nu$, then ${\bv}_{\lambda} = {\bv}_{\mu} + {\bv}_{\nu}$. Therefore the inclusion
    $$
    \left\{ \sum\limits_{\lambda \in \Phi_\fu } t_{\lambda}{\bv}_{\lambda}: t_{\lambda} > 0 \right\} \subseteq \left\{ \sum\limits_{\lambda \in {\Delta_\fu}} t_{\lambda}{\bv}_{\lambda}: t_{\lambda} > 0 \right\}
    $$
    holds. To prove the opposite inclusion, let $s = \sum_{\lambda \in {\Delta_\fu}} t_{\lambda}{\bv}_{\lambda}$, where {the} coefficients $t_{\lambda}$ are positive. Fix $\lambda_0 \in \Phi_\fu $, and let $\lambda_0 = \sum\limits_{i=1}^r c_i \lambda_i$ be the representation of $\lambda_0$ as a linear combination of elements of ${\Delta_\fu}$ with strictly positive integer coefficients (we only enumerate the elements of $\Delta_{\fu}$ that occur in this decomposition). Fix a positive $ \varepsilon < \min _{i=1 \ldots, r} \frac{t_{\lambda_i}}{c_i}$. Then 
    $$
    \bv = \sum\limits_{
    \lambda \in {\Delta_\fu},\, \lambda \notin \{ \lambda_1, \ldots, \lambda_r \}
    } t_{\lambda} {\bv}_{\lambda} + \sum\limits_{i=1}^r (t_{\lambda_i} - c_i \varepsilon) \bv_{\lambda_i} + \varepsilon \bv_{\lambda_0}
    $$
    is a presentation of $\bv$ as a linear combination of elements ${\bv}_{\lambda}$ for $\lambda \in {\Delta_\fu} \cup \{ \lambda_0 \}$ with positive coefficients. Repeating this procedure for every $\lambda_0 \in \Phi_\fu  \smallsetminus {\Delta_\fu}$, we obtain a presentation of the form $\bv = \sum\limits_{\lambda \in \Phi_\fu } t'_{\lambda}{\bv}_{\lambda}$ where $t'_{\lambda} > 0$, thus $\bv \in \left\{ \sum\limits_{\lambda \in \Phi_\fu } t_{\lambda}{\bv}_{\lambda}: t_{\lambda} > 0 \right\}$.
\end{proof}

\subsection{\texorpdfstring{Cones in $\SL_d(\R)$}{Cones in SL(d,R)}}

In this subsection we describe all possible algebraically and geometrically expanding cones in the case
 $H = \SL_d(\R)$ and $A = $  the subgroup of diagonal matrices in $H$.  We let  $\mathfrak{a} = \Lie(A)$; that is, 
\begin{equation*}\label{fa_diag}
\fa = \{ {\pmb{y}:=\diag(y_1, \ldots, y_d)}: y_1 + \ldots + y_d = 0 \}.
\end{equation*}
Any horospherical subgroup of $H$ can be conjugated to a subgroup of the group of upper-triangular unipotent matrices. A convenient way to describe all horospherical upper-triangular subgroups is to associate them with partitions of $\{1,\dots,d\}$.
Namely, for any $U$ as above there exists  a partition
\begin{equation}\label{block_structure}
{\mathcal{I} = \{ I_1, \ldots, I_s \}, \text{ where } I_{k} = \{ i_{k-1} + 1, \ldots, i_{k} \}\text{ for } k = 1,\dots,s,\ i_0 = 0,}
\end{equation}
such that {$I_1 \sqcup \ldots \sqcup I_s = \{1,\ldots,d\}$, and}
\eq{Ubypartition}{
U = \left\{
\begin{pmatrix}
    1 & u_{1,2} & 
    \ldots & u_{1,d} \\
    0 & 1 &  
    \ldots & u_{2,d} \\
    \ldots & \ldots 
    & \ldots & \ldots \\
    0 & 0 & 
    \ldots & 1
\end{pmatrix}:  u_{i,j} = 0   \text{ if } i, j \in I_k  \text{ for the same } k.
\right\} 
}
Conversely, for any such partition one can construct $U$ as described above. From now on, we parameterize horospherical subgroups by partitions.   {Here one can recall the two examples from \S\ref{def_cones_section}:
\begin{itemize}
    \item the minimal case (Example \ref{exampleua}), where $U$ was given by a minimal non-trivial partition $\{1,\dots,m\} \sqcup \{m+1,\dots,d\}$, and \item the maximal case 
(Example \ref{examplemax}), where $U$ corresponded to the maximal partition $\{1\} \sqcup\ldots \sqcup\{d\}$.
\end{itemize}} 

{In general, when $U$ is of the form \equ{Ubypartition} for a partition $\mathcal{I}$, we will denote by $\fa_{\mathcal{I}}^+$, $\fe_{\mathcal{I}}^+$ and $\fs_{\mathcal{I}}^+$ the corresponding expanding cones. Our plan in this section is to describe them explicitly.} 
\smallskip 

{It is easy to see that for $U$ as in \equ{Ubypartition}} one has
$$
\Phi_\fu  = \{ \lambda_{i,j}: i \in I_k, j \in I_l \,\, \text{for some} \, k < l \}
$$
and
$$ {\Delta_\fu} = \{ \lambda_{i,j}: i \in I_k, j \in I_{k+1} \,\, \text{for some} \, k = 1, \ldots, s-1\},
$$
where $\lambda_{i,j}({\pmb{y}}) = y_i- y_j$.
{Also} let $\bv_{i,j} \in \mathfrak{a}$ be the $d \times d$ diagonal matrix with $1$ in the $i$-th position, $-1$ in the $j$-th position and $0$ elsewhere. Then, {in view of \equ{killingsld},} for any ${\pmb{y}} \in \fa$ one has $\lambda_{i,j}({\pmb{y}}) = {\frac{1}{2d}}\langle \bv_{i,j}, {\pmb{y}}\rangle$; {that is, $\bv_{i,j}$ is proportional to $\bv_{\lambda_{i,j}}$ defined via \eqref{s_alpha_def}}. 
{Now,  using \eqref{au_big}, \eqref{eu} and Lemma \ref{minimal_general},} one 
can explicitly describe $\fa_{\mathcal{I}}^+$ and $\fe_{\mathcal{I}}^+$ as follows:
\begin{equation}\label{a_by_combinations_eq}
    \fa_{\mathcal{I}}^+: = \left\{ \sum\limits_{k=1}^{s-1}\sum\limits_{(i, j) \in I_{k} \times I_{k+1}} t_{i,j} \bv_{i,j} : \,\, t_{i,j} > 0 \right\}
    \end{equation}
    and
   \begin{equation}\label{e_sld_min}
    \fe_{\mathcal{I}}^+: = \left\{ 
    {\pmb{y}}: y_i>y_j \, \text{if } i \in I_k, j \in I_{k+1}  \text{ for some } k= 1, \ldots, s-1; \,\, \sum\limits_{i=1}^d y_i = 0 \right\}.
    \end{equation}

{It will be instructive to convert 
\eqref{a_by_combinations_eq} to the dual form, that is, represent the cone 
$\fa_{\mathcal{I}}^+$ as the positivity set of certain linear functionals on $\fa$.}

\begin{lemma}\label{a_plus_in_coord}
     {Let $\mathcal{I}$ be as in \eqref{block_structure}. Then one has} 
\begin{equation}\label{form_with_primes}
    \fa_{\mathcal{I}}^+  = \left\{ 
    {\pmb{y}}\left| \begin{aligned} \sum\limits_{i \in I_1 \sqcup \ldots \sqcup I_{k-1}} y_i + \sum\limits_{i \in I_k} \min(y_i,0) > 0 \,\, \text{for any} \ 2 \leq k \leq s-1; \\ y_i > 0 \ \text{for } i \in I_1; \ y_i < 0 \ \text{for } i \in I_s; \ \sum\limits_{i=1}^{d} y_i = 0 \qquad \end{aligned} \right. \right\}\end{equation}
\end{lemma}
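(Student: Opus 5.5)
The plan is to read the description \eqref{a_by_combinations_eq} as a network-flow problem and to show that the inequalities in \eqref{form_with_primes} are exactly its feasibility conditions. Since $\bv_{i,j}=e_i-e_j$ (writing $e_i$ for the diagonal matrix units), an element $\sum t_{i,j}\bv_{i,j}$ of \eqref{a_by_combinations_eq} is a positive flow on the complete bipartite graphs $I_k\times I_{k+1}$, $k=1,\dots,s-1$. Here $t_{i,j}>0$ is the amount sent from $i\in I_k$ to $j\in I_{k+1}$, and $y_i=\mathrm{out}_i-\mathrm{in}_i$ is the net outflow at vertex $i$. We have $\mathrm{in}_i=0$ for $i\in I_1$ and $\mathrm{out}_i=0$ for $i\in I_s$. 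For an intermediate vertex $i\in I_k$ with $2\le k\le s-1$, both $\mathrm{in}_i$ and $\mathrm{out}_i$ are strictly positive, because all $t_{i,j}$ are positive. Put $F_k:=\sum_{i\in I_k,\,j\in I_{k+1}}t_{i,j}$ for the total flow between consecutive levels. Summing the relation $y_i=\mathrm{out}_i-\mathrm{in}_i$ over $I_1\sqcup\ldots\sqcup I_k$ gives $F_k=\sum_{i\in I_1\sqcup\ldots\sqcup I_k}y_i$.

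For the inclusion "$\subseteq$", take $\pmb y=\sum t_{i,j}\bv_{i,j}$. Then $y_i=\mathrm{out}_i>0$ on $I_1$ and $y_i=-\mathrm{in}_i<0$ on $I_s$. For $2\le k\le s-1$ and $i\in I_k$, the equality $\mathrm{in}_i=\mathrm{out}_i-y_i$ with $\mathrm{out}_i>0$ gives $\mathrm{in}_i>\max(0,-y_i)=-\min(y_i,0)$. Summing over $i\in I_k$ yields
$$F_{k-1}=\sum_{i\in I_k}\mathrm{in}_i>-\sum_{i\in I_k}\min(y_i,0),$$
and substituting $F_{k-1}=\sum_{i\in I_1\sqcup\ldots\sqcup I_{k-1}}y_i$ produces exactly the inequality in \eqref{form_with_primes}. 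The trace condition is automatic.

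For the inclusion "$\supseteq$", I will construct the flow level by level, and this is the only step needing real care. Set $F_k:=\sum_{i\in I_1\sqcup\ldots\sqcup I_k}y_i$. On $I_1$ put $\mathrm{out}_i:=y_i>0$, and on $I_s$ put $\mathrm{in}_i:=-y_i>0$. For an intermediate level $I_k$, the hypothesis says $F_{k-1}>\sum_{i\in I_k}(-\min(y_i,0))$. We can therefore choose numbers $\mathrm{in}_i>-\min(y_i,0)\ge 0$, $i\in I_k$, with $\sum_{i\in I_k}\mathrm{in}_i=F_{k-1}$, for instance by distributing the positive slack equally. Then set $\mathrm{out}_i:=\mathrm{in}_i+y_i$, which is positive by the choice of $\mathrm{in}_i$, and note that $\sum_{i\in I_k}\mathrm{out}_i=F_{k-1}+\sum_{i\in I_k}y_i=F_k$. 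Inductively every $F_k$ is positive: it equals a sum of positive $\mathrm{out}_i$ over $I_k$, or, for $k=1$, a sum of the $y_i>0$ on $I_1$. At the last level the trace condition gives $\sum_{i\in I_s}\mathrm{in}_i=-\sum_{i\in I_s}y_i=F_{s-1}$, so the totals match there as well. For each $k$ we now have positive vectors $(\mathrm{out}_i)_{i\in I_k}$ and $(\mathrm{in}_j)_{j\in I_{k+1}}$ with the same total $F_k>0$. The product matrix
$$t_{i,j}:=\frac{\mathrm{out}_i\,\mathrm{in}_j}{F_k}$$
has strictly positive entries with the prescribed row and column sums. Consequently $\sum t_{i,j}\bv_{i,j}$ has net outflow $\mathrm{out}_i-\mathrm{in}_i=y_i$ at every vertex, so it equals $\pmb y$.

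The one point to double-check is the boundary bookkeeping: the first and last blocks are handled by the sign conditions rather than by the $\min$-inequality, and the degenerate case $s=2$ needs no intermediate conditions at all. With those checked, the two inclusions give \eqref{form_with_primes}.
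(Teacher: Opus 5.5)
Your proof is correct and follows the paper's argument. Your $\mathrm{in}_j$ and $\mathrm{out}_i$ are the paper's $z_j$ and $x_i$, with the slack split equally as in the paper's $\varepsilon_k$, and your product matrix $t_{i,j}=\mathrm{out}_i\,\mathrm{in}_j/F_k$ is exactly the paper's $t_{i,j}=x_iz_j/\sum_{\ell\in I_k}x_\ell$. The only difference is in the inclusion $\subseteq$: the paper uses subset sums $\sum_{i\in I_1\sqcup\cdots\sqcup I_{k-1}\sqcup J}y_i$ with a suitable choice of $J\subseteq I_k$, whereas summing your per-vertex bound $\mathrm{in}_i>\max(0,-y_i)$ over $I_k$ reaches the same inequalities a little more directly.
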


{To illustrate this lemma, we {again} recall Examples \ref{exampleua} and \ref{examplemax}. In Example \ref{exampleua} 
{$s=2$, hence}  the set $\{ k: 2 \leq k \leq s-1 = 1 \}$ is empty. {Therefore the conditions defining $\fa_{\mathcal{I}}^+$ reduce to the second line in \eqref{form_with_primes}, i.e.\ to} \eqref{ex22_cone_a}.}
{{Now look at   Example \ref{examplemax}:} there one has
{$s = d$ and $I_k = \{k\}$ for $k = 1,\dots,s$. Thus the first line in  \eqref{form_with_primes} is equivalent to 
$$
\sum\limits_{i =1}^{k-1} y_i +  \min(y_k,0) > 0 \iff \sum\limits_{i =1}^{k-1} y_i  > 0 \text{ and } \sum\limits_{i =1}^{k} y_i  > 0
$$
for any $2 \leq k \leq d-1$. This shows that \eqref{form_with_primes} is equivalent to \eqref{ex23_cone_a}.}
}

\begin{proof}[Proof of Lemma \ref{a_plus_in_coord}.]
    
    First, suppose that $
    {\pmb{y}}$ belongs to the set \eqref{form_with_primes}. Let us define:
    \begin{itemize}
        \item $\varepsilon_k: = \dfrac1{|I_k|}\left(\sum\limits_{i \in I_1 \sqcup \ldots \sqcup I_{k-1}} y_i + \sum\limits_{j \in I_k} \min(y_j,0)\right)
        $ for $k = 1, \ldots, s$. We know that $\varepsilon_k > 0$ for $k = 2, \ldots, s-1$, and $\varepsilon_1 = \varepsilon_s = 0$.
        \item $z_j = -\min(y_j,0) + \varepsilon_k$ for every $j \in I_k$ and any $k = 1, \ldots, s$. Note that all $z_j$ except for $j \in I_1$ are strictly positive, and $z_j = 0$ for $j \in I_1$.
        \item $x_i = y_i + z_i$ for $i \in \{ 1, \ldots, d\}$. One can also check that all $x_i$ are strictly positive except for $i \in I_s$, and $x_i=0$ for $i \in I_s$. 
    \end{itemize}

    A useful observation is that for any $k  = 1, \ldots, s-1$ one has
\begin{equation}\label{two_equal_sums}
        \sum\limits_{j \in I_{k+1}} z_j = \sum\limits_{i \in I_1 \sqcup \ldots \sqcup I_{k}} y_i = \sum\limits_{u \in I_k} x_u.
    \end{equation}

    For $(i, j) \in I_{k}\times I_{k+1}, \ k = 1, \ldots, s-1$,  we define $t_{i,j} : = \frac{x_i z_j}{\sum_{{\ell} \in I_{k}}x_{\ell}}$. Note that $t_{i,j} > 0$. We claim that 
       $ \sum\limits_{k=1}^{s-1}\sum\limits_{(i, j) \in I_{k} \times I_{k+1}} t_{i,j} {\bv}_{i,j} = 
       {\pmb{y}}$.
    Indeed: if $i \in I_k$, {then} the $i$-th coordinate of the vector $\sum\limits_{k=1}^{s-1}\sum\limits_{(i, j) \in I_{k} \times I_{k+1}} t_{i,j} {\bv}_{i,j}$ equals to {$  \sum\limits_{j \in I_{k+1}} t_{i, j}-\sum\limits_{{\ell} \in I_{k-1}} t_{{\ell}, i} $}, where we are using the convention that $I_0 = I_{s+1} = \varnothing$. Note that
    $$
    -\sum\limits_{{\ell} \in I_{k-1}} t_{{\ell}, i} = -\frac{\sum_{{\ell} \in I_{k-1}}x_{\ell} z_i}{\sum_{{\ell} \in I_{k-1}}x_{\ell}} = -z_i \,\,\,\, \text{if $k > 1$ and} \,\, -\sum\limits_{{\ell} \in I_{k-1}} t_{{\ell}, i} = 0 = -z_i \,\,\,\, \text{if $k = 1$,}
    $$
    and
    $$
    \sum\limits_{j \in I_{k+1}} t_{i, j} = \frac{\sum_{j \in I_{k+1}}x_i z_j}{\sum_{{\ell} \in I_{k}}x_{\ell}} = x_i \,\,\,\, \text{if $k < s$ and} \,\, \sum\limits_{j \in I_{k+1}} t_{i, j} = 0 = x_i \,\,\,\, \text{if $k = s$,}
    $$
    where the simplification in the last line follows from \eqref{two_equal_sums}. Summing up, we have shown that the $i$-th coordinate is equal to $x_i - z_i = y_i$; 
    thus $
    {\pmb{y}}\in \fa_{\mathcal{I}}^+$.

    \smallskip

    Now suppose that $
    {\pmb{y}}= \sum\limits_{k=1}^{s-1}\sum\limits_{(i, j) \in I_{k} \times I_{k+1}} t_{i,j} {\bv}_{i,j}$ for some $t_{i, j} > 0$; we will prove that all the inequalities in \eqref{form_with_primes} hold. Indeed: note that $y_i = - \sum\limits_{u \in I_{l-1}} t_{u,i} + \sum\limits_{j \in I_{l+1}} t_{i, j}$ for any $i \in I_l$. Thus
   $$\begin{aligned}
    \sum\limits_{i \in I_1 \sqcup \ldots \sqcup I_{k-1}} y_i = \sum\limits_{l=1}^{k-1} \left(  - \sum\limits_{u \in I_{l-1}} \sum\limits_{i \in I_{l}} t_{u,i} + \sum\limits_{i \in I_{l}} \sum\limits_{j \in I_{l+1}} t_{i, j} \right)\\ = - \sum\limits_{l=1}^{k-2}\sum\limits_{(i, j) \in I_l \times I_{l+1}} t_{i,j} + \sum\limits_{l=1}^{k-1}\sum\limits_{(i, j) \in I_l \times I_{l+1}} t_{i,j} = \sum\limits_{(i, j) \in I_{k-1} \times I_{k}} t_{i,j}.
    \end{aligned}$$

    Let $J  \subseteq I_k$ be some subset of indices. Then 
    $$
\begin{aligned}\sum\limits_{i \in I_1 \sqcup \ldots \sqcup I_{k-1} \sqcup J} y_i &= \sum\limits_{(i, j) \in I_{k-1} \times I_{k}} t_{i,j} - \sum\limits_{(i, j) \in I_{k-1} \times J} t_{i,j} + \sum\limits_{(i, j) \in J \times I_{k+1}} t_{i,j}  \\ &= \sum\limits_{(i, j) \in I_{k-1} \times \left( I_{k} \smallsetminus J \right)} t_{i,j} + \sum\limits_{(i, j) \in J \times I_{k+1}} t_{i,j},\end{aligned}
    $$
    and the latter expression is:
    \begin{enumerate}[label=\rm (\alph*)]
        \item\label{aplusb} strictly positive if $k = 1$ for any nonempty $J$ (since $I_{k+1} = I_2$ is nonempty);
        \item\label{aplusa} strictly positive if $k = 2, \ldots, s-1$ since the sets $I_{k-1}, I_{k+1}$ and one of the sets $J$ and $I_k \smallsetminus J$ are nonempty;
        \item\label{aplusc} strictly positive if $k = s$ and $J \neq I_s$ and {equal to} zero if $J = I_s$ since $I_{k+1} = I_{s+1} = \varnothing.$
    \end{enumerate}

{It remains to make correct choices of $J$ to verify all the inequalities in \eqref{form_with_primes}:
\begin{itemize}
\item For $i \in I_1$, consider $J = \{ i \}$. By \ref{aplusb}, we have established that $y_i > 0$. 
\item Next, consider $2 \leq k \leq s-1$, and let $J$ be a subset of $I_k$ consisting of indices of all the non-positive elements (potentially empty). Then $\sum\limits_{i \in J} y_i =  \sum\limits_{i \in I_k} \min(y_i,0)$, and \ref{aplusa} implies that $$\sum\limits_{i \in I_1 \sqcup \ldots \sqcup I_{k-1}} y_i + \sum\limits_{i \in I_k} \min(y_i,0) > 0.$$
\item Finally, for $i \in I_s$, consider $J = I_s \smallsetminus \{ i \}$. Then $\sum_{j \in I_1 \sqcup \ldots \sqcup I_{k-1} \sqcup J} y_j =   -y_i$, and \ref{aplusc} yields that the latter is positive, proving that $y_i < 0$.
\end{itemize}
We have shown that all the inequalities from \eqref{form_with_primes} hold, which completes the proof.   
}
\end{proof}

\subsection{\texorpdfstring{Comparing $\fa_{\fu}^+$ with $\fe_{\fu}^+$}%
  {Comparing the algebraic and geometric cones}}\label{abelian}

{Next we would like to study the position of algebraically and geometrically expanding cones with respect to one another:}  when the inclusion $\fa_{\fu}^+ \subseteq \fe_{\fu}^+$ holds, when the opposite inclusion $\fe_{\fu}^+ \subseteq \fa_{\fu}^+$ holds, and when neither of the cones is inside the other one. {
Let us start with a simple criterion for the inclusion  $\fa_{\fu}^+ \subseteq \fe_{\fu}^+$ to hold that can be written down for cones in an arbitrary connected semisimple group $H$.}

\begin{proposition}\label{inner_product_yields_inclusion}
    The inclusion $\fa_{\fu}^+ \subseteq \fe_{\fu}^+$ holds if and only if ${\langle \lambda, \mu \rangle} \geq 0$ for any $\lambda, \mu \in \Phi_\fu $. 
\end{proposition}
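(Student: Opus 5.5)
Both directions come down to evaluating roots on the generators $\bv_\mu$ of $\fa_\fu^+$, using the duality relation \eqref{s_alpha_def}: $\lambda(\bv_\mu)=\langle\lambda,\mu\rangle$ for all $\lambda,\mu\in\fa^*$.

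\emph{Sufficiency.} Suppose $\langle\lambda,\mu\rangle\ge 0$ for all $\lambda,\mu\in\Phi_\fu$. Take $\bv=\sum_{\mu\in\Phi_\fu}t_\mu\bv_\mu\in\fa_\fu^+$ with all $t_\mu>0$, and fix $\lambda\in\Phi_\fu$. Then
$$\lambda(\bv)=\sum_{\mu\in\Phi_\fu}t_\mu\langle\lambda,\mu\rangle .$$
Every term is nonnegative by hypothesis. The term with $\mu=\lambda$ equals $t_\lambda|\lambda|^2>0$, since roots are nonzero. Hence $\lambda(\bv)>0$ for every $\lambda\in\Phi_\fu$, which is exactly $\bv\in\fe_\fu^+$ by \eqref{eu}.

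\emph{Necessity.} Assume $\fa_\fu^+\subseteq\fe_\fu^+$, and suppose some $\lambda,\mu\in\Phi_\fu$ satisfy $\langle\lambda,\mu\rangle<0$. Necessarily $\mu\neq\lambda$. Choose $t_\mu=1$ and $t_\nu=\varepsilon$ for all other $\nu\in\Phi_\fu$, with $\varepsilon>0$ small. The resulting vector
$$\bv=\bv_\mu+\varepsilon\sum_{\nu\ne\mu}\bv_\nu$$
lies in $\fa_\fu^+$. Its value on $\lambda$ is
$$\lambda(\bv)=\langle\lambda,\mu\rangle+\varepsilon\sum_{\nu\neq\mu}\langle\lambda,\nu\rangle .$$
For $\varepsilon$ small enough this is negative, because $\Phi_\fu$ is finite. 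So $\bv\notin\fe_\fu^+$, a contradiction.

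Equivalently, the inclusion of open cones passes to closures, giving $\bv_\mu\in\overline{\fa_\fu^+}\subseteq\overline{\fe_\fu^+}$. This forces $\lambda(\bv_\mu)=\langle\lambda,\mu\rangle\ge0$.

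There is no real obstacle. The only points needing care are the strict positivity of the diagonal term $t_\lambda|\lambda|^2$ in the sufficiency part, and the finiteness of $\Phi_\fu$, which justifies the perturbation argument in the necessity part.
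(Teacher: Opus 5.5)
Your proof is correct and is essentially the paper's argument. The sufficiency part is identical, and your test vector $\bv_\mu+\varepsilon\sum_{\nu\ne\mu}\bv_\nu$ is just the paper's $\xi\bv_\mu+\sum_{\nu\ne\mu}\bv_\nu$ rescaled with $\varepsilon=1/\xi$; the paper picks $\xi$ explicitly larger than $\bigl|\sum_{\nu\ne\mu}\langle\lambda,\nu\rangle\bigr|/|\langle\lambda,\mu\rangle|$, which is the same smallness condition on $\varepsilon$. Your closure reformulation is also valid, though the word ``open'' is unnecessary there, since taking closures preserves inclusions regardless.
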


\begin{proof}
    Suppose ${\langle \lambda, \mu \rangle} \geq 0$ for any $\lambda, \mu \in \Phi_\fu $. Fix $\bv = \sum_{\mu \in \Phi_\fu } t_{\mu} {\bv}_{\mu} \in \fa_{\fu}^+$, where $t_{\mu} > 0$. Our goal is to show that $\lambda({\bv}) > 0$ for any $\lambda \in \Phi_\fu $, which would imply that $\bv \in \fe_{\fu}^+$. Indeed:
    $$
    \lambda({\bv}) = \sum\limits_{\mu \in \Phi_\fu } t_{\mu} \lambda({\bv}_{\mu}) \stackunder[1pt]{{}={}}{\scriptstyle  \eqref{s_alpha_def}} t_{\lambda} \langle \lambda, \lambda \rangle + \sum\limits_{\lambda \neq \mu \in \Phi_\fu } t_{\mu} \langle \lambda, \mu \rangle \geq t_{\lambda} \langle \lambda, \lambda \rangle > 0.
    $$

    Now suppose $\lambda, \mu \in \Phi_\fu $ are such that ${\langle \lambda, \mu \rangle} < 0$. Fix a constant $\xi > \frac{\left|\sum_{\nu \in \Phi_\fu , \, \nu \neq \mu} \langle \lambda, \nu \rangle \right|}{\left| \langle \lambda, \mu \rangle \right|}$, and let $\bv: = \xi {\bv}_{\mu} + \sum_{\nu \in \Phi_\fu , \, \nu \neq \mu} {\bv}_{\nu}$; by definition, ${\bv} \in \fa_{\fu}^+$. On the other hand, 
    $$
    \lambda({\bv}) = \xi  \langle \lambda, \mu \rangle + \sum\limits_{\nu \in \Phi_\fu , \ \nu \neq \mu} \langle \lambda, \nu \rangle < - \left| \sum\limits_{\nu \in \Phi_\fu , \ \nu \neq \mu} \langle \lambda, \nu \rangle \right| + \sum\limits_{\nu \in \Phi_\fu , \ \nu \neq \mu} \langle \lambda, \nu \rangle < 0,
    $$
    therefore $\bv \notin \fe_{\fu}^+$ and $\fa_{\fu}^+ \nsubseteq \fe_{\fu}^+$.
\end{proof}

{We have seen in Example \ref{exampleua} that {for $H=\SL_d(\R)$} the inclusion $\fa_{\fu}^+ \subseteq \fe_{\fu}^+$ holds for  {abelian (equivalently, minimal)} $\fu$. It turns out that  {in general} this inclusion
is also related to the minimality and commutativity of $\fu$, 
 however these relations are one-sided. Namely, we have the following two statements:
}

\begin{proposition}\label{aine_general}
    If $\fu$ is abelian, then $\fa_{\fu}^+ \subseteq \fe_{\fu}^+$. That is, $\fs_{\fu}^+ = \fa_{\fu}^+$.
\end{proposition}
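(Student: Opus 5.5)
By Proposition \ref{inner_product_yields_inclusion}, it suffices to show that $\langle \lambda, \mu \rangle \geq 0$ for all $\lambda, \mu \in \Phi_\fu$ whenever $\fu$ is abelian. The second claim, $\fs_\fu^+ = \fa_\fu^+$, then follows at once from Definition \ref{superexp}, since $\fs_\fu^+ = \fe_\fu^+ \cap \fa_\fu^+ = \fa_\fu^+$. The whole proof therefore reduces to a root-theoretic statement about pairs of roots occurring in an abelian nilradical.

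Fix $\lambda, \mu \in \Phi_\fu$. The case $\lambda = \mu$ is trivial, so assume $\lambda \neq \mu$. Since $\fu$ is abelian, $[\fh_\lambda, \fh_\mu] = 0$. I will show this forces $\lambda + \mu \notin \Phi$. Suppose instead that $\lambda+\mu \in \Phi$. Any root of the form $\lambda + \mu$ with $\lambda,\mu\in\Phi_\fu$ again lies in $\Phi_\fu$, because $\fu$ is the sum of the root spaces on which the defining element $\log b$ is positive, and positivity is additive. In particular $\lambda+\mu \ne 0$. The standard fact for restricted roots is that whenever $\lambda,\mu,\lambda+\mu$ are all roots, $[\fh_\lambda,\fh_\mu]\neq 0$ (see Knapp, e.g.\ the $\mathfrak{sl}_2$-string argument for restricted roots). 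This contradicts commutativity, so $\lambda+\mu\notin\Phi\cup\{0\}$.

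Now I apply the root-string property of the (possibly non-reduced) restricted root system $\Phi$. For roots $\lambda \neq -\mu$ with $\langle\lambda,\mu\rangle<0$, the sum $\lambda+\mu$ is a root. Indeed, the $\mu$-string through $\lambda$ is $\lambda+k\mu$ for $-p\le k\le q$, with $p-q = 2\langle\lambda,\mu\rangle/\langle\mu,\mu\rangle<0$, hence $q\ge1$. Here $\lambda\neq-\mu$ holds automatically, since both lie in $\Phi_\fu$, where $\log b$ is positive. Combined with the previous paragraph, this yields $\langle\lambda,\mu\rangle\ge0$.

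The main obstacle is the nonvanishing $[\fh_\lambda,\fh_\mu]\neq0$ for restricted roots, where root spaces may have dimension greater than one. I would handle it by taking $X\in\fh_\mu$ nonzero and completing it to an $\mathfrak{sl}_2$-triple $(X,\theta X,H_\mu)$ via the Cartan involution. I would then decompose $\bigoplus_k \fh_{\lambda+k\mu}$ as a representation of this $\mathfrak{sl}_2$. Since $\lambda(H_\mu)<0$ when $\langle\lambda,\mu\rangle<0$, weight theory forces $\ad X$ to be injective on the weight space $\fh_\lambda$. Hence $[\fh_\mu,\fh_\lambda]\neq0$, which directly contradicts commutativity. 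This argument bypasses the separate string lemma entirely, and I would present it in this form.
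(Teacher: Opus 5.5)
Your proof is correct. Its skeleton matches the paper's: you reduce via Proposition \ref{inner_product_yields_inclusion} to showing $\langle\lambda,\mu\rangle\ge0$ for all $\lambda,\mu\in\Phi_\fu$, and then derive a contradiction between $\langle\lambda,\mu\rangle<0$ and commutativity of $\fu$. The difference lies in the key step.

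The paper first turns commutativity into the combinatorial condition $\lambda+\mu\notin\Phi$. This is Lemma \ref{crit_of_abelian}, which rests on Kraljevi\'c's theorem $[\fh_\lambda,\fh_\mu]=\fh_{\lambda+\mu}$ for restricted roots with $\lambda+\mu\ne0$. It then uses the abstract root system fact (Knapp's Proposition 2.48(e)) that $\langle\lambda,\mu\rangle<0$ forces $\lambda+\mu\in\Phi\cup\{0\}$. Finally it rules out $0$ by positivity of roots in $\Phi_\fu$ on an element of $\fa$.

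Your final version bypasses both ingredients. You take an $\mathfrak{sl}_2$-triple through a nonzero $X\in\fh_\mu$ and use that the raising operator is injective on negative weight spaces; this gives $[\fh_\mu,\fh_\lambda]\neq0$ directly, contradicting $[\fu,\fu]=0$. It is cleanest to let the triple act on all of $\fh$ by $\ad$, so you need not worry about the string meeting the centralizer of $\fa$ or about non-reducedness. Your route needs neither $\lambda+\mu\ne0$ nor the fact that restricted roots form an abstract root system. It also handles multi-dimensional restricted root spaces without Kraljevi\'c's equality. The paper's route, by contrast, yields a reusable criterion for commutativity, which it uses again in the $B_2$ example.

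Two remarks on the intermediate route you discard. First, the claim that $[\fh_\lambda,\fh_\mu]\ne0$ whenever $\lambda+\mu\in\Phi$ does not follow from injectivity alone when $\langle\lambda,\mu\rangle\ge0$. In that case one needs surjectivity of $\ad X$ from the weight space containing $\fh_\lambda$ onto the one containing $\fh_{\lambda+\mu}$. Second, the observation that $\lambda+\mu\in\Phi_\fu$ is not needed. Since your final argument avoids both, neither affects its validity.
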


\begin{proposition}\label{incl_mininal}
    If 
    the inclusion $\fa_{\fu}^+ \subseteq \fe_{\fu}^+$ holds, then $\fu$ is minimal within the class of horospherical Lie algebras such that their projection  to each of the simple factors
    of $\fh$
is nontrivial.
\end{proposition}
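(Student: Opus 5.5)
The plan is to argue by contraposition using Proposition~\ref{inner_product_yields_inclusion}. By that proposition, the inclusion $\fa_{\fu}^+ \subseteq \fe_{\fu}^+$ is equivalent to $\langle \lambda, \mu\rangle \ge 0$ for all $\lambda,\mu\in\Phi_\fu$. So it suffices to show the following: if $\fu$ is not minimal in the stated class, then there exist $\lambda,\mu\in\Phi_\fu$ with $\langle\lambda,\mu\rangle<0$.

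First I will set up the standard parabolic description. Since $A$ is a maximal connected $\Ad$-diagonalizable subgroup of $P$, I can choose an ordering of $\Phi$, with simple roots $\Pi$, such that $P$ is the standard parabolic attached to a subset $\Theta\subset\Pi$. Then
$$\Phi_\fu = \{\lambda\in\Phi^+ : \lambda\notin \operatorname{span}\Theta\},$$
that is, the positive roots having a positive coefficient at some simple root outside $\Theta$. Horospherical subalgebras $\fu'\subseteq\fu$ normalized by $\fa$ correspond to larger parabolics, namely subsets $\Theta'\supseteq\Theta$. The condition that the projection of $\fu'$ to each simple factor is nontrivial means that $\Theta'$ does not contain all the simple roots of any irreducible component of $\Pi$. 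One point needs care here: a competing horospherical $\fu'\subsetneq\fu$ need not a priori be $\fa$-stable. I would handle this by conjugating $\fu'$ inside $P$ so that it becomes standard with respect to $\fa$, which only changes the data by conjugation. I expect this reduction to be the main technical point, though it is standard (any two maximal split tori of $P'\supseteq P$ are conjugate, and one can arrange a common one).

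Now suppose $\fu$ is not minimal. Then there is $\Theta'\supsetneq\Theta$ with the above property. Pick $\alpha\in\Theta'\smallsetminus\Theta$. In the irreducible component of $\Pi$ containing $\alpha$, pick a simple root $\beta\notin\Theta'$; in particular $\beta\notin\Theta$ and $\beta\neq\alpha$. The Dynkin diagram of that component is a connected tree, so I take the unique path $\alpha=\gamma_0,\gamma_1,\dots,\gamma_r=\beta$ with $r\ge1$. Set $\lambda:=\gamma_1+\dots+\gamma_r$.

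I will then check three things.
\begin{itemize}
\item $\lambda$ is a root. A sum of simple roots along a path in the Dynkin diagram is a root, by induction, since consecutive simple roots have negative inner product; this also holds in the possibly nonreduced restricted root system.
\item Both roots lie in $\Phi_\fu$. The root $\lambda$ does, since it has coefficient $1$ at $\beta\notin\Theta$. The root $\alpha$ does, since it is simple and $\alpha\notin\Theta$.
\item Their inner product is negative. Because the path is a geodesic in a tree, $\alpha$ is adjacent to $\gamma_1$ and orthogonal to $\gamma_2,\dots,\gamma_r$. Hence $\langle\alpha,\lambda\rangle=\langle\alpha,\gamma_1\rangle<0$.
\end{itemize}
By Proposition~\ref{inner_product_yields_inclusion} this gives $\fa_\fu^+\not\subseteq\fe_\fu^+$, which proves the contrapositive.
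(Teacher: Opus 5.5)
Your argument is correct and is essentially the paper's: both use Proposition~\ref{inner_product_yields_inclusion}, locate two simple roots of $\Phi_\fu$ in one connected Dynkin component, and sum simple roots along the path joining them to obtain a pair in $\Phi_\fu$ with negative inner product. The paper pairs $\xi_0+\dots+\xi_{l-1}$ with the endpoint $\xi_l$, whereas you pair the endpoint $\gamma_0$ with $\gamma_1+\dots+\gamma_r$. The only real difference is how the two simple roots are obtained. You extract them directly from a smaller $\Theta'\supsetneq\Theta$; the paper instead first reduces to simple $\fh$ and quotes the standard fact that minimality means $|\Phi_\fu\cap\Delta|=1$. In your conjugation step, rather than presupposing $P'\supseteq P$, the cleaner justification is that $P\cap P'$ contains a maximal split torus, which can then be conjugated to $A$ inside $P$.
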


In order to prove Proposition \ref{aine_general}, we will need 
{the following}

\begin{lemma}\label{crit_of_abelian}
    $\fu$ is abelian if and only if 
    \begin{equation}\label{colsed_under_addition}
        \text{for any} \,\, \lambda, \mu \in \Phi_\fu  \,\, \text{one has} \,\, \lambda + \mu \notin \Phi.
    \end{equation}
\end{lemma}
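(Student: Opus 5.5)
The plan is to work directly with the root space decomposition \equ{rootdecomp}, $\fu=\bigoplus_{\lambda\in\Phi_\fu}\fh_\lambda$, together with the standard bracket relation $[\fh_\lambda,\fh_\mu]\subseteq\fh_{\lambda+\mu}$, where $\fh_{\lambda+\mu}=0$ if $\lambda+\mu$ is neither a root nor zero. By bilinearity, $\fu$ is abelian if and only if $[\fh_\lambda,\fh_\mu]=0$ for all $\lambda,\mu\in\Phi_\fu$. So both directions reduce to comparing this vanishing with the condition \equ{colsed_under_addition}.

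For the implication ``\equ{colsed_under_addition} $\Rightarrow$ abelian'', take $\lambda,\mu\in\Phi_\fu$. First, $\lambda+\mu\neq 0$: the sum $\lambda+\mu$ lies in the additive span of $\Phi_\fu$, all of whose elements are positive on a vector $\bv$ with $U=U_+(\exp\bv)$ (for instance $\bv=\log b$). By assumption $\lambda+\mu\notin\Phi$, so $\fh_{\lambda+\mu}=0$ and hence $[\fh_\lambda,\fh_\mu]=0$. This direction is immediate.

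For the converse, suppose $\lambda,\mu\in\Phi_\fu$ and $\lambda+\mu\in\Phi$; I need to show $[\fh_\lambda,\fh_\mu]\neq0$. I would first check that $\lambda+\mu\in\Phi_\fu$, which is automatic here. Indeed, $\fu$ is the sum of the root spaces for roots positive on $\bv$ (more precisely, $\fu$ is the nilradical of a parabolic subalgebra), and $(\lambda+\mu)(\bv)>0$, so $\fh_{\lambda+\mu}\subseteq\fu$. It therefore suffices to show that $[\fh_\lambda,\fh_\mu]\neq 0$ whenever $\lambda,\mu,\lambda+\mu$ are restricted roots. This is the standard fact for restricted root systems: if $\lambda+\mu\in\Phi$ then $[\fh_\lambda,\fh_\mu]\neq0$. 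I would cite it from \cite{Knapp}, in the section on restricted roots.

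If a self-contained argument is wanted, I would use $\mathfrak{sl}_2$-theory. Pick nonzero $X\in\fh_\lambda$, form the $\mathfrak{sl}_2$-triple $\{X,\theta X,[X,\theta X]\}$ with $\theta$ the Cartan involution, and consider the $\lambda$-string through $\mu$. On this string, $\operatorname{ad}$ of $\fh_\lambda$ maps $\fh_\mu$ nontrivially onto $\fh_{\mu+\lambda}$ whenever the latter is nonzero, by irreducible decomposition of $\bigoplus_k\fh_{\mu+k\lambda}$. The main obstacle is exactly this nondegeneracy in the restricted (non-reduced, possibly multiplicity $>1$) setting. Since the paper freely cites \cite{Knapp}, I expect to simply invoke it there. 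Combining both directions, $\fu$ is abelian if and only if \equ{colsed_under_addition} holds.
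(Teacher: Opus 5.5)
Your proposal is correct and is essentially the paper's proof. It uses the same reduction via \equ{rootdecomp} to the vanishing of all $[\fh_\lambda,\fh_\mu]$, the same use of a vector $\bv$ positive on $\Phi_\fu$ to rule out $\lambda+\mu=0$, and then the fact that $[\fh_\lambda,\fh_\mu]\neq 0$ whenever $\lambda+\mu\in\Phi$, which the paper quotes in the stronger form $[\fh_\lambda,\fh_\mu]=\fh_{\lambda+\mu}$ from \cite{KraljevicRootSubspaces} rather than from \cite{Knapp}. Your check that $\lambda+\mu\in\Phi_\fu$ is not needed, since $[\fh_\lambda,\fh_\mu]\subseteq[\fu,\fu]$ in any case; and in your $\mathfrak{sl}_2$ sketch the precise statement is that $\operatorname{ad}X\colon\fh_\mu\to\fh_{\mu+\lambda}$ is injective or surjective (depending on the $H$-weight of $\fh_\mu$), either of which gives nonvanishing since both spaces are nonzero.
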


\begin{proof}
    Since $\fu = \bigoplus_{\lambda \in \Phi_\fu } \fh_{\lambda}$, $\fu$ is abelian if and only if $[ \fh_{\lambda}, \fh_{\mu}] = 0$ for any $\lambda, \mu \in \Phi_\fu $ (including $\lambda = \mu$).
    Let us notice that $\lambda + \mu \neq 0$ for any $\lambda, \mu \in \Phi_\fu $: indeed, there exists $\bv \in \fa$ on which every element of $\Phi_\fu $ is positive, and in particular $\lambda(\bv), \mu(\bv) > 0$. Since $\lambda + \mu \neq 0$, it is known (see for instance \cite[Theorem]{KraljevicRootSubspaces})
    that $[\fh_{\lambda}, \fh_{\mu}] = \fh_{\lambda + \mu}$ if $\lambda + \mu \in \Phi$ and $[\fh_{\lambda}, \fh_{\mu}] = 0$ otherwise. This completes the proof.
\end{proof}

\begin{proof}[Proof of Proposition \ref{aine_general}.] Suppose $\fu$ is abelian; by Lemma \ref{crit_of_abelian}, it implies that \eqref{colsed_under_addition} holds. By Proposition \ref{inner_product_yields_inclusion}, it is enough to show that $\langle \lambda, \mu \rangle \geq 0$ for any $\lambda, \mu \in \Phi_\fu $.
    Since $U$ is horospherical with respect to an element of $A$,
there exists $\bv\in \fa$ such that 
$$
\lambda(\bv)>0 \ \ \text{for every }\lambda\in\Phi_{\fu}.
$$
In particular, if $\lambda,\mu\in\Phi_{\fu}$, then
$(\lambda+\mu)(\bv)>0$,
so $\lambda+\mu\ne0$.

Suppose for the sake of contradiction that $\langle \lambda,\mu\rangle<0$. The set of restricted roots $\Phi$ forms an abstract root system (\cite[Corollary 6.53]{Knapp}). By \cite[Proposition 2.48(e)]{Knapp}, if $\langle \lambda,\mu\rangle<0$, then
$\lambda+\mu$ is either a root or $0$. The second possibility has
just been excluded, hence $\lambda+\mu\in\Phi$, contradicting \eqref{colsed_under_addition}.
Therefore $\langle\lambda,\mu\rangle\ge0$, which completes the proof.
\end{proof}

\begin{proof}[Proof of Proposition \ref{incl_mininal}.]
{Let $\fh = \fh_1 \oplus \ldots \oplus \fh_m $ be a  decomposition of  $\fh$ as a direct sum of simple Lie algebras. Let $\fu = \fu_1 \oplus \ldots \oplus \fu_m$ be the corresponding decomposition of $\fu$; we note that $\fu$ is minimal in the desired class if and only if each of $\fu_i$ is minimal in $\fh_i$. It can be seen from the definitions that $\fa_{\fu}^+ = \fa_{\fu_1}^+ \oplus \ldots \oplus \fa_{\fu_m}^+$ and $\fe_{\fu}^+ = \fe_{\fu_1}^+ \oplus \ldots \oplus \fe_{\fu_m}^+$; therefore, it is enough to prove the statement for minimal horospherical $\fu$ in a simple Lie algebra. From now on, $\fh$ is assumed to be simple.
}
    
    Assume that $\fa_{\fu}^{+}\subseteq \fe_{\fu}^{+}$. Suppose $U = \exp(\fu)$ is an expanding horospherical subgroup with respect to $b=\exp(\bv)$, and choose a positive root system $\Phi_+$ of $\fh$
    such that $\lambda(\bv) \ge 0$ for any $\lambda \in \Phi_+$. Let $\Delta$ be the set of simple roots in $\Phi_+$. It is known that $\fu$ is minimal  if and only if $|\Phi_{\fu} \cap \Delta| = 1$. 
    
    Suppose, for the sake of contradiction, that $\{ \lambda, \mu \} \subseteq \Phi_{\fu} \cap \Delta$. Since $\fh$ is simple, the Dynkin diagram of $\Phi_+$ is connected. Let $\xi_0 = \lambda$, $\xi_1, \ldots, \xi_l = \mu$ be a sequence of simple roots along the shortest path in the Dynkin diagram connecting $\lambda$ and $\mu$; in particular, \begin{equation}\label{pairwise_products}
        \langle \xi_i, \xi_j \rangle = 0 \ \text{if} \ |i - j| \geq 2 \ \text{and} \ \langle \xi_i, \xi_j \rangle < 0 \ \text{if} \ |i - j| =1.
    \end{equation}

    Now define the sequence of 
    {functionals} $\nu_k, \ k = 0, \ldots, l,$ via $\nu_k: = \xi_0 + \ldots + \xi_k$. We will show by induction that $\nu_k \in \Phi_{\fu}$ for any $k = 0, \ldots, l$.

    Since $\nu_0 = \lambda$, there is nothing to prove for $k= 0$. Now suppose $\nu_{k-1} \in \Phi_{\fu}$. By \eqref{pairwise_products}, $$\langle \nu_{k-1} , \xi_k \rangle = \sum\limits_{i=0}^{k-1} \langle \xi_{i} , \xi_k \rangle = \langle \xi_{k-1} , \xi_k \rangle < 0.$$
    {Then one}  can apply \cite[Proposition 2.48(e)]{Knapp} and, since $\nu_{k-1} + \xi_k \neq 0$, conclude that $\nu_k = \nu_{k-1} + \xi_k \in \Phi$. Finally, $\nu_k(\bv) = \xi_0(\bv) + \ldots + \xi_k(\bv) \geq \xi_0(\bv) = \lambda(\bv) > 0$ and $\nu_k \in \Phi_{\fu}$.
    It remains to notice that $\nu_{l-1}$ and $\mu$ are two elements of $\Phi_{\fu}$ for which $\langle \nu_{l-1}, \mu\rangle = \langle \nu_{l-1}, \xi_l \rangle < 0$, thus by Proposition \ref{inner_product_yields_inclusion} one has $\fa_{\fu}^+ \nsubseteq \fe_{\fu}^+$. This finishes the proof via contradiction.
\end{proof}

{The aforementioned equivalence between minimality and commutativity for}
horospherical subgroups 
{of $\SL_d(\R)$  implies the following}
simple criterion: 

\begin{corollary}\label{a_in_e_sl}
  {Let $\fu$ be a horospherical upper-triangular subalgebra of $\mathfrak{sl}_d(\R)$ corresponding to a partition $\mathcal{I} = \{ I_1, \ldots, I_s \}$ of $\{1,\dots,d\}$.  Then}  the inclusion $\fa_{\mathcal{I}}^+ \subseteq \fe_{\mathcal{I}}^+$ holds if and only if $s = 2$; that is, in the situations featured in Example \ref{exampleua}.
\end{corollary}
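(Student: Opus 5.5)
Both directions of the corollary reduce to results already proved. \(H=\SL_d(\R)\) is simple, and every upper-triangular horospherical \(\fu\) is nontrivial. So Proposition \ref{incl_mininal} and Proposition \ref{aine_general} apply, and the task is to identify which partitions \(\mathcal{I}\) give minimal, respectively abelian, subalgebras \(\fu\).

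\emph{The case \(s=2\).} Here \(\fu\) consists of the matrices \(\begin{pmatrix} 0 & * \\ 0 & 0\end{pmatrix}\) with blocks of sizes \(|I_1|\) and \(|I_2|\). Such matrices multiply to zero, so \(\fu\) is abelian, and Proposition \ref{aine_general} gives \(\fa_{\mathcal{I}}^+ \subseteq \fe_{\mathcal{I}}^+\). One can also check this through Lemma \ref{crit_of_abelian}: for roots \(\lambda_{i,j},\lambda_{k,l}\) with \(i,k\in I_1\) and \(j,l\in I_2\), the sum \(\lambda_{i,j}+\lambda_{k,l}\) is never a root \(\lambda_{p,q}=y_p-y_q\). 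Indeed, the two positive coefficients of the sum fall on indices in \(I_1\), so cancellation would require \(j=k\), which is impossible.

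\emph{The case \(s\ge 3\).} I would avoid the minimality statement and argue directly with Proposition \ref{inner_product_yields_inclusion}. Pick \(i\in I_1\), \(j\in I_2\), \(k\in I_3\). Then \(\lambda_{i,j}\) and \(\lambda_{j,k}\) both lie in \(\Phi_\fu\). By \equ{killingsld}, the form on \(\fa^*\) is proportional to the standard inner product under the identification \(\lambda_{p,q}\leftrightarrow e_p-e_q\). Hence
\[
\langle \lambda_{i,j},\lambda_{j,k}\rangle \;\propto\; (e_i-e_j)\cdot(e_j-e_k) = -1 < 0,
\]
and Proposition \ref{inner_product_yields_inclusion} yields \(\fa_{\mathcal{I}}^+\not\subseteq \fe_{\mathcal{I}}^+\).

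Alternatively, one could invoke Proposition \ref{incl_mininal} together with the fact that the minimal horospherical subalgebras of \(\mathfrak{sl}_d\) are exactly those attached to two-block partitions. That route needs the statement that a coarsening of \(\mathcal{I}\) gives a strictly smaller \(\fu\), which the direct computation above sidesteps.

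The case \(s=1\) does not occur, since then \(\fu=0\), whereas \(U\) is nontrivial.

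The only step needing care is the sign of the inner product. The form is positive definite and proportional to the standard one on the hyperplane \(\sum y_i=0\), so the sign is exactly the sign of the dot product computed above.
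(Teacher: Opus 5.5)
Your argument is correct. The direction $s=2\Rightarrow\fa_{\mathcal{I}}^+\subseteq\fe_{\mathcal{I}}^+$ is the same as in the paper: $\fu$ is abelian, and Proposition \ref{aine_general} applies. In your side check via Lemma \ref{crit_of_abelian}, cancellation could also come from $i=l$, which is equally impossible.

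For the converse you take a genuinely different route. The paper combines Proposition \ref{incl_mininal} (the inclusion forces $\fu$ to be minimal) with a fact it states but does not prove: in $\mathfrak{sl}_d(\R)$, the minimal, abelian and two-block horospherical subalgebras coincide. You bypass minimality entirely. You apply Proposition \ref{inner_product_yields_inclusion} to the explicit pair $\lambda_{i,j},\lambda_{j,k}\in\Phi_\fu$, for which $\langle\lambda_{i,j},\lambda_{j,k}\rangle=-\frac{1}{2d}<0$.

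Your argument is essentially the type-$A$ specialization of the proof of Proposition \ref{incl_mininal}. There one starts from the simple roots $\lambda_{i_1,i_1+1},\lambda_{i_2,i_2+1}\in\Phi_\fu$, and the Dynkin-chain argument ends with the pair $\nu_{l-1}=\lambda_{i_1,i_2}$, $\mu=\lambda_{i_2,i_2+1}$. That is your pair with $(i,j,k)=(i_1,i_2,i_2+1)$.

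The paper's route places the corollary inside the chain abelian $\Rightarrow$ inclusion $\Rightarrow$ minimal. Yours is self-contained, costs one inner-product computation, and does not depend on the classification of minimal horospherical subalgebras of $\mathfrak{sl}_d(\R)$. You could make the $s=2$ direction equally elementary by applying Proposition \ref{inner_product_yields_inclusion} directly: for $i,k\in I_1$ and $j,l\in I_2$ one has $(e_i-e_j)\cdot(e_k-e_l)=\delta_{ik}+\delta_{jl}\ge 0$.
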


{One may ask if either of the Propositions \ref{aine_general} and \ref{incl_mininal} can be turned into a criterion for an arbitrary 
$H$. In both cases, the answer is negative, as shown in the examples below.}

\begin{example}[$\fa_{\fu}^+ \subseteq \fe_{\fu}^+$ for a non-abelian $\fu$]\rm
    Let $H$ be a real Lie group of type $B_2/C_2$, e.g. ${ \rm Sp}_4(\R)$, let $\lambda $ be its short simple root and $\mu$ be its long  simple root (see \cite[Appendix C, \S 1]{Knapp} for this root system realization).
    The set 
\[
\Phi_{\fu}
=
\{\lambda,\lambda+\mu,2\lambda+\mu\}
\]
is an ideal of the set of positive roots with respect to $\lambda$ and $\mu$, thus 
$\fu = \bigoplus_{\nu \in \Phi_\fu } \fh_{\nu}$ is a {horospherical subalgebra} in $\fh$. It is not abelian by Lemma \ref{crit_of_abelian}, since $\lambda + (\lambda + \mu) = 2\lambda+\mu$ is a root. 
    
  Now let us show that $\fa_{\fu}^+ \subseteq \fe_{\fu}^+$.  The Cartan matrix of $B_2$ is 
$$
\begin{pmatrix}
    2 & -2 \\ -1 & 2
\end{pmatrix}
$$
(see \cite[\S 11.1]{H72}), thus $\langle \lambda, \mu \rangle = - \langle \lambda, \lambda \rangle = - \frac{1}{2} \langle \mu, \mu \rangle$. Therefore the pairwise scalar products of elements of $\Phi_{\fu}$ are
$$
\langle \lambda, \lambda + \mu \rangle = \langle \lambda, \lambda \rangle + \langle \lambda, \mu \rangle = 0 \geq 0; \ \ \ \langle \lambda, 2\lambda + \mu \rangle = 2\langle \lambda, \lambda \rangle + \langle \lambda, \mu \rangle = \langle \lambda, \lambda \rangle > 0;
$$
and 
$$
\langle \lambda + \mu, 2\lambda + \mu \rangle = 2\langle \lambda, \lambda \rangle + 2\langle \lambda, \mu \rangle + \langle \lambda, \mu \rangle + \langle \mu, \mu \rangle  = \frac{1}{2}\langle \mu, \mu \rangle > 0.
$$
By Proposition \ref{inner_product_yields_inclusion}, $\fa_{\fu}^+ \subseteq \fe_{\fu}^+$. 
\end{example}

\begin{example}[Inclusion $\fa_{\fu}^+ \subseteq \fe_{\fu}^+$ fails for a minimal $\fu$] \rm 
Let $H$ be the split real simple group of type $G_2$. {Let $\lambda,\mu$ be the simple roots of $G_2$, with $\lambda$ the short root and $\mu$ the long root. It is known (see again \cite[\S 11.1]{H72}) that the Cartan matrix of $G_2$ is 
$$
\begin{pmatrix}
    2 & -3 \\ -1 & 2
\end{pmatrix},
$$ thus $2 \langle \lambda, \mu \rangle = -3 \langle \lambda, \lambda \rangle$ and
$\langle\lambda,\lambda+\mu\rangle
=
\langle\lambda,\lambda\rangle+\langle\lambda,\mu\rangle
=-\frac{1}{2}\langle\lambda,\lambda\rangle <0.$
The set 
\[
\Phi_{\fu}
=
\{\lambda,\lambda+\mu,2\lambda+\mu,
3\lambda+\mu,3\lambda+2\mu\}
\]
is a minimal ideal of the set of positive roots with respect to $\lambda$ and $\mu$, {since it only contains one simple root $\lambda$}. Thus 
$\fu = \bigoplus_{\nu \in \Phi_\fu } \fh_{\nu}$ is a minimal  {horospherical subalgebra} in $\fh$. Since $\langle\lambda,\lambda+\mu\rangle < 0$, by Proposition \ref{inner_product_yields_inclusion} one has $\fa_{\fu}^{+}\nsubseteq \fe_{\fu}^{+}$.}
\end{example}

\medskip

{We conclude the Appendix by formulating the criterion for the opposite inclusion \linebreak $\fe_{\fu}^+ \subseteq \fa_{\fu}^+$ to hold. For simplicity, we will only do it in the case $H = \SL_d(\R)$.} 
We will use the following notation: if $\mathcal{I} = \{ I_1, \ldots, I_s \}$ is a partition of $\{1,\dots,d\}$, by $|I_k|$ we denote the number of elements in the block $I_k$; {that is, $|I_k| = i_k - i_{k-1}$ in the notation from \eqref{block_structure}}.

\begin{proposition}\label{eina_prop}
    The inclusion $\fe_{\mathcal{I}}^+ \subseteq \fa_{\mathcal{I}}^+$ holds if and only if the following two conditions are satisfied:
    \begin{enumerate}[label= \rm (\roman*)]
        \item\label{eina_A} $|I_1| = |I_s| = 1$; and 
        \item\label{eina_B} $|I_k|^2 \leq 1 + 4 
        i_{k-1}(d-i_k)
        $ 
        for any $k \in \{ 2, \ldots, s-1 \}$.
    \end{enumerate}
\end{proposition}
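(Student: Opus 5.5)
The plan is to reduce the inclusion of open cones to an inclusion of closed cones, and then test it on extreme rays. Both $\fe_{\mathcal{I}}^+$ and $\fa_{\mathcal{I}}^+$ are non-empty open convex cones in $\fa$. Each such cone is the interior of its closure, so
$$\fe_{\mathcal{I}}^+ \subseteq \fa_{\mathcal{I}}^+ \iff \overline{\fe_{\mathcal{I}}^+} \subseteq \overline{\fa_{\mathcal{I}}^+}.$$

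By Lemma \ref{a_plus_in_coord}, and expanding the $\min$ in \eqref{form_with_primes}, $\overline{\fa_{\mathcal{I}}^+}$ is cut out by finitely many linear inequalities:
\begin{itemize}
\item $y_i\ge 0$ for $i\in I_1$ and $y_i\le 0$ for $i\in I_s$;
\item for every $2\le k\le s-1$ and every $J\subseteq I_k$, the functional $L_{k,J}(\pmb y):=\sum_{i\in I_1\sqcup\dots\sqcup I_{k-1}}y_i+\sum_{i\in J}y_i$ satisfies $L_{k,J}(\pmb y)\ge 0$.
\end{itemize}
The $\min$ form is recovered by taking the minimum of $L_{k,J}$ over $J$.

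By \eqref{e_sld_min}, $\overline{\fe_{\mathcal{I}}^+}$ is the cone of vectors with $\sum y_i=0$ that are order-preserving for the partial order ``$i\succ j$ if $i\in I_k$, $j\in I_{k+1}$''. Its extreme rays are spanned by the two-valued vectors $\mathbf 1_D-\frac{|D|}{d}\mathbf 1$, where $D$ runs over the proper non-empty up-sets of this poset. Such an up-set $D$ consists of all of $I_1,\dots,I_{k-1}$, a subset of $I_k$, and nothing below; for $k=1$ it is just a subset of $I_1$. So it suffices to evaluate each defining functional of $\overline{\fa_{\mathcal{I}}^+}$ on these finitely many two-valued vectors.

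\emph{Condition \ref{eina_A}.} Suppose $|I_1|\ge 2$. Take $D=\{i\}$ for a single $i\in I_1$. The resulting vector has $y_{i'}=-1/d<0$ for every other $i'\in I_1$, so it lies outside $\overline{\fa_{\mathcal{I}}^+}$. The case $|I_s|\ge2$ is symmetric, with $D$ the complement of a single element of $I_s$. Conversely, when $|I_1|=|I_s|=1$, the sign conditions on $I_1$ and $I_s$ hold on $\overline{\fe_{\mathcal{I}}^+}$. Indeed, $y_1$ is the maximum coordinate and $\sum y_i=0$, so $y_1\ge0$; similarly $y_d$ is the minimum, so $y_d\le 0$.

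\emph{Condition \ref{eina_B}.} Fix $2\le k\le s-1$ and write $p=|I_k|$, $P=i_{k-1}$, $Q=d-i_k$. The main computation is to check that $L_{k,J}\ge0$ on every extreme ray. Consider a two-valued vector taking value $h>0$ on $D=I_1\sqcup\dots\sqcup I_{k-1}\sqcup(I_k\smallsetminus J')$, where $|J'|=j$, and value $\ell<0$ elsewhere. The constraint $\sum y_i=0$ gives
$$\ell=-\frac{(P+p-j)\,h}{j+Q}.$$
The worst choice of $J$ is $J=J'$, giving
$$L_{k,J}/h \;=\; P-\frac{j(P+p-j)}{j+Q}.$$
This is nonnegative if and only if $PQ\ge j(p-j)$. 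Maximizing $j(p-j)$ over integers $0\le j\le p$ gives the requirement $PQ\ge\lfloor p^2/4\rfloor$. A parity check shows this is equivalent to $p^2\le 1+4PQ$: if $p$ is even, $4PQ\ge p^2-1$ forces $4PQ\ge p^2$ by divisibility by $4$; if $p$ is odd, $\lfloor p^2/4\rfloor=(p^2-1)/4$ directly. This is exactly \ref{eina_B}.

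It remains to check two things. First, for this $k$, the rays whose up-set $D$ stops at a block other than $I_k$ give no worse value: when $D$ stops at an earlier block, or when $D$ contains all of $I_k$, the functional $L_{k,J}$ becomes a sum of values with only nonnegative coefficient excess, so it is nonnegative. Second, when \ref{eina_B} fails, the maximizing $j$ above produces an explicit vector in $\overline{\fe_{\mathcal{I}}^+}$ with $L_{k,J'}<0$. Perturbing it slightly into $\fe_{\mathcal{I}}^+$ gives a point outside $\fa_{\mathcal{I}}^+$, which proves the converse.

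I expect the main obstacle to be this bookkeeping: verifying that only up-sets which cut through $I_k$ itself matter for the functional $L_{k,J}$, and identifying the worst $J$ as the part of $I_k$ lying outside $D$.
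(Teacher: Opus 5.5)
Your proof is correct, but it takes a genuinely different route from the paper. The paper argues pointwise. It fixes $\pmb y\in\fe_{\mathcal I}^+$ and, for a middle block $I_k$ with $a$ negative coordinates, compares the block averages $y_L>y_+\ge0>y_->y_R$ through the ratio $\kappa=-y_R/y_L$. It then uses (ii), via Observations~\ref{obs1_eina} and \ref{obs2_eina}, to rule out $\ell<\kappa a$ and $a<r-\kappa v$ holding simultaneously. For necessity it builds separate vectors; for (ii) this is a point of the open cone $\fe_{\mathcal I}^+$ on which the left-hand side of \eqref{middle_k} vanishes.

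You instead pass to closed cones and test the finitely many facet functionals of $\overline{\fa_{\mathcal I}^+}$ on a generating set of $\overline{\fe_{\mathcal I}^+}$. Both directions then come out of the single identity $d\cdot L_{k,J'}(v_D)=PQ-j(p-j)$, where $v_D:=\mathbf 1_D-\tfrac{|D|}{d}\mathbf 1$. This explains transparently where $\lfloor p^2/4\rfloor$ comes from: it is the worst two-valued vector cutting through $I_k$. It also gives a necessity argument with no ad hoc construction. The price is reliance on standard polyhedral facts, which the paper's elementary case analysis avoids.

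There are two points to tighten.
\begin{enumerate}
\item Not every up-set vector spans an extreme ray. For $I_1=\{1,2\}$, $I_2=\{3\}$, the vector for $D=I_1$ is the sum of those for $\{1\}$ and $\{2\}$. What you actually need, and what is true, is that the $v_D$ lie in $\overline{\fe_{\mathcal I}^+}$ and generate it. This follows from the layer-cake decomposition $\pmb y=c_0\mathbf 1+\sum_{t\ge1}(c_t-c_{t-1})\mathbf 1_{D_t}$, projected to $\sum_i y_i=0$. Here $c_0<\dots<c_m$ are the distinct coordinates of $\pmb y$, and the sets $D_t=\{i: y_i\ge c_t\}$ are up-sets.
\item The ``no worse value'' step should be written out rather than described as a ``coefficient excess.'' It is simply
\begin{itemize}
\item $L_{k,J}(v_D)=|D|\bigl(1-\tfrac{P+|J|}{d}\bigr)\ge0$ when $D\subseteq I_1\sqcup\dots\sqcup I_{k-1}$, and
\item $L_{k,J}(v_D)=(P+|J|)\bigl(1-\tfrac{|D|}{d}\bigr)\ge0$ when $D\supseteq I_1\sqcup\dots\sqcup I_k$.
\end{itemize}
\end{enumerate}
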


\medskip

{In particular, when $d=4$ it follows that the partition $\{1\}, \{2,3\},\{4\}$ gives rise to the inclusion $\fe_{\mathcal{I}}^+ \subseteq \fa_{\mathcal{I}}^+$, while partitions  $\{1,2\}, \{3\},\{4\}$ and $\{1\}, \{2\},\{3,4\}$ produce examples of algebraically and geometrically expanding cones that are not contained in one another. Of course this can also be verified directly by comparing \eqref{form_with_primes} with \eqref{e_sld_min}.}

\smallskip
We start proving Proposition \ref{eina_prop} with the following technical lemma.

\begin{lemma}\label{sums_of_blocks}
    Suppose $
    {\pmb{y}}\in \fe_{\mathcal{I}}^+$. {Then} for any $k = 1, \ldots, s-1$ one has
    $$
    \sum\limits_{i \in I_1 \sqcup \ldots \sqcup I_k} y_i > 0.
    $$
\end{lemma}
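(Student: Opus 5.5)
We argue by a chain of inequalities that travels from the first block to the $k$-th block. Let $\pmb{y}\in\fe_{\mathcal{I}}^+$ and fix $k\in\{1,\dots,s-1\}$. By \eqref{e_sld_min}, $y_i>y_j$ whenever $i\in I_l$ and $j\in I_{l+1}$. Since each block is nonempty, transitivity gives
$$\min_{i\in I_l} y_i > \max_{j\in I_{l+1}} y_j \quad\text{for each } l,$$
so that $y_i>y_j$ whenever $i\in I_l$, $j\in I_{l'}$ with $l<l'$. Consequently every coordinate indexed by $S_k:=I_1\sqcup\dots\sqcup I_k$ is strictly larger than every coordinate indexed by the complement $S_k^c=I_{k+1}\sqcup\dots\sqcup I_s$. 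Both sets are nonempty because $k\le s-1$.

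Next we use the trace-zero condition $\sum_{i=1}^d y_i=0$. Put $m:=\min_{i\in S_k}y_i$ and $M:=\max_{j\in S_k^c}y_j$, so that $m>M$. Suppose, for contradiction, that $\sum_{i\in S_k}y_i\le 0$. Then $|S_k|\, m\le \sum_{i\in S_k} y_i\le 0$, which forces $m\le 0$ and hence $M<0$. It follows that every $y_j$ with $j\in S_k^c$ is negative, so $\sum_{j\in S_k^c}y_j<0$. Adding the two sums gives $\sum_{i=1}^d y_i<0$, contradicting the trace-zero condition. This proves the strict inequality $\sum_{i\in S_k}y_i>0$.

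An equivalent way to phrase the same argument is through averages: the average of the $y_i$ over $S_k$ strictly exceeds the average over $S_k^c$, and the weighted combination of the two averages (with weights $|S_k|$ and $|S_k^c|$) equals zero, so the first average must be positive. There is no real obstacle in this proof. The only point requiring care is the transitivity step, which relies on the blocks being consecutive and nonempty, so that the inequalities between adjacent blocks chain together across all blocks.
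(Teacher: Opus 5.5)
Your proof is correct and takes essentially the paper's approach. The paper compares the average $y$ over $I_1\sqcup\dots\sqcup I_k$ with the average $y^*$ over the remaining blocks and uses $i_k y+(d-i_k)y^*=0$, which is exactly your ``equivalent phrasing''; your min/max contradiction is an equivalent variant, and you spell out the chaining of adjacent-block inequalities to non-adjacent blocks, which the paper uses without comment.
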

\begin{proof}
    Let $y : = {\frac1{i_k}{\sum\limits_{i \in I_1 \sqcup \ldots \sqcup I_k} y_i}}$ (where $i_k = |I_1| + \ldots + |I_k|$, as before) be the average of the elements over first $k$ blocks and $y^* : = {\frac1{d - i_k}{\sum_{i \in I_{k+1} \sqcup \ldots \sqcup I_s} y_i}}$ be the average over the remaining blocks.
    Since, {in view of \eqref{e_sld_min},} the inequality $y_i > y_j$ holds for any pair $(i, j) \in \left( I_1 \sqcup \ldots \sqcup I_k \right) \times \left( I_{k+1} \sqcup \ldots \sqcup I_s \right)$, we see that $y > y^*$. Next, 
    $$
    0 = \sum\limits_{i = 1}^d y_i = \sum\limits_{i \in I_1 \sqcup \ldots \sqcup I_k} y_i + \sum\limits_{i \in I_{k+1} \sqcup \ldots \sqcup I_s} y_j = i_k y + (d-i_k)y^* \,\,\,\, \Rightarrow \,\,\,\, y > 0 > y^*,
    $$
    and thus $\sum\limits_{i \in I_1 \sqcup \ldots \sqcup I_k} y_i = i_k y > 0$.
\end{proof}

\begin{proof}[Proof of Proposition \ref{eina_prop}.]
First, suppose that conditions \ref{eina_A} and \ref{eina_B} hold, and let
$
{\pmb{y}}\in \mathfrak e_I^+$. 
Since $|I_1|=|I_s|=1$, it follows that $y_i>0$ for
$i\in I_1$ and $y_i<0$ for $i\in I_s$.

Now fix $2 \leq k \leq s-1$, and put $\ell:=
i_{k-1}, \
r:=|I_k|$ and $v:=
d-i_k$.
Condition \ref{eina_B} now takes the form $r^2 \leq 4\ell v + 1$.

\begin{observation}\label{obs1_eina}
    One has $r^2 \leq 4\ell v + 1$ if and only if $\left\lfloor\frac{r^2}{4}\right\rfloor\leq \ell v$.
\end{observation}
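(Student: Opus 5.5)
The claim is a statement about nonnegative integers $r\ge 1$ and $\ell,v\ge 1$: we must show $r^2\le 4\ell v+1$ if and only if $\lfloor r^2/4\rfloor\le \ell v$. The plan is to compute $\lfloor r^2/4\rfloor$ explicitly according to the parity of $r$, and then use the integrality of $4\ell v$ to absorb the $+1$.

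\emph{Case $r$ even}, $r=2q$. Then $r^2/4=q^2$ is an integer and $\lfloor r^2/4\rfloor=q^2$. The condition $r^2\le 4\ell v+1$ reads $4q^2\le 4\ell v+1$. Since both $4q^2$ and $4\ell v$ are divisible by $4$, this is equivalent to $4q^2\le 4\ell v$, i.e.\ to $q^2\le \ell v$. Indeed, if $4q^2>4\ell v$, then $4q^2\ge 4\ell v+4>4\ell v+1$.

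\emph{Case $r$ odd}, $r=2q+1$. Then $r^2=4q^2+4q+1$, so $r^2/4=q^2+q+\tfrac14$ and $\lfloor r^2/4\rfloor=q^2+q$. The condition $r^2\le 4\ell v+1$ becomes $4(q^2+q)+1\le 4\ell v+1$, which is exactly $q^2+q\le \ell v$.

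Combining the two cases gives the equivalence. The only point needing care is the even case, where the integrality/divisibility argument is what lets us drop the $+1$; there is no genuine obstacle. I note that this reformulation is presumably useful because $\lfloor r^2/4\rfloor=\max_{j}\, j(r-j)$, the largest product obtained by splitting the block $I_k$ into two parts. That is the quantity that should appear when comparing the first line of \eqref{form_with_primes} with \eqref{e_sld_min}, in the spirit of Lemma~\ref{sums_of_blocks}.
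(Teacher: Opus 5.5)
Your proof is correct and essentially the paper's argument: the paper notes the ``only if'' direction is trivial, and for ``if'' derives $r^2\le 4\ell v+3$ and then uses that squares are $0$ or $1$ modulo $4$. Your parity split is just the explicit form of that residue fact ($r^2\equiv 0$ or $1 \pmod 4$ according as $r$ is even or odd), and it handles both directions at once.
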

\begin{proof}
    The "only if" direction is trivial. For the "if" direction, note that the inequality $\left\lfloor\frac{r^2}{4}\right\rfloor\leq \ell v$ implies that $r^2 \leq 4\ell v + 3$, and since any square of an integer is congruent to 0 or 1 modulo 4, the latter yields the inequality $r^2 \leq 4\ell v + 1$.
\end{proof}

\begin{observation}\label{obs2_eina}
    For $r \geq 2$, one has 
    \begin{equation*}
        \max\limits_{1 \leq a \leq r-1} a(r-a) = \left\lfloor\frac{r^2}{4}\right\rfloor.
    \end{equation*}
\end{observation}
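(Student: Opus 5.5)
The statement is Observation~\ref{obs2_eina}: for an integer $r\ge 2$,
\[
\max_{1\le a\le r-1} a(r-a)=\left\lfloor \frac{r^2}{4}\right\rfloor .
\]
The plan is to treat $a(r-a)$ as a concave quadratic in $a$. It is symmetric about $a=r/2$, so over the integers $1\le a\le r-1$ its maximum is attained at the integer closest to $r/2$. The proof then splits according to the parity of $r$.

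First I will record the identity
\[
a(r-a)=\frac{r^2}{4}-\Bigl(a-\frac r2\Bigr)^2 .
\]
This shows that maximizing $a(r-a)$ over integers $a$ is the same as minimizing $|a-r/2|$.

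Next I will handle the two parities separately.
\begin{itemize}
\item If $r=2q$ is even, take $a=q$. This lies in $[1,r-1]$ because $q\ge 1$ and $q\le 2q-1$. It gives $a(r-a)=q^2=r^2/4=\lfloor r^2/4\rfloor$, and the identity shows that no integer $a$ gives more.
\item If $r=2q+1$ is odd, then $q\ge1$ since $r\ge 2$. For every integer $a$ we have $|a-r/2|\ge 1/2$, so $a(r-a)\le r^2/4-1/4=q(q+1)$. Equality holds at $a=q\in[1,r-1]$. Finally, $q(q+1)=(r^2-1)/4=\lfloor r^2/4\rfloor$, because $r^2\equiv 1 \pmod 4$.
\end{itemize}

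The only point needing a little care is checking that the maximizer $a=\lfloor r/2\rfloor$ lies in the admissible range $1\le a\le r-1$. This is exactly where the hypothesis $r\ge 2$ is used. Everything else is elementary.
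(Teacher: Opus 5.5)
Your proof is correct and follows essentially the same route as the paper: bound $a(r-a)$ by $r^2/4$ and attain the maximum at $a=\lfloor r/2\rfloor$, where $a(r-a)=\lfloor r/2\rfloor\lceil r/2\rceil=\lfloor r^2/4\rfloor$. The only differences are that your completed-square identity gives the sharper bound $(r^2-1)/4$ directly in the odd case, whereas the paper implicitly uses that $a(r-a)$ is an integer, and that you check explicitly the range condition $1\le\lfloor r/2\rfloor\le r-1$, which the paper leaves unstated.
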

\begin{proof}
   It is easy to check that the inequality $a(r-a) \leq \frac{r^2}{4}$ holds for any $1 \leq a \leq r-1$. To obtain the desired equality, one needs to take $a = \lfloor \frac{r}{2} \rfloor$, which implies $r-a = \lceil \frac{r}{2} \rceil$.
\end{proof}

By Lemma \ref{a_plus_in_coord}, in order to show that $
{\pmb{y}}\in \fa_{\mathcal{I}}^+$ one needs to prove the inequality
\begin{equation}\label{middle_k}
\sum_{i\in I_1\sqcup\cdots\sqcup I_{k-1}}y_i
+
\sum_{i\in I_k}\min(y_i,0)>0.
\end{equation}
If all coordinates in $I_k$ are nonnegative, \eqref{middle_k} follows from Lemma \ref{sums_of_blocks} applied to $k-1$. If all coordinates in $I_k$ are nonpositive, it
follows from Lemma \ref{sums_of_blocks} applied to $k$.

It remains to consider the case when $a$ coordinates in $I_k$ are negative,
where $1\leq a\leq r-1$ (this can only happen when $r \geq 2$). 
Let
        $$
        {y_L} : = \frac{y_1 + \ldots + y_\ell}{\ell}, \,\,\,\, {y_+} : = \frac{1}{r-a} \sum\limits_{i \in I_k} \max( y_i, 0) ,
        $$
        $$
        {y_-}: = \frac{1}{a}\sum\limits_{i \in I_k} \min( y_i, 0) \,\,\,\, \text{and} \,\,\,\, {y_R}: = \frac{y_{d-v + 1} + \ldots + y_d}{v}.
        $$
It follows from Lemma \ref{sums_of_blocks} and \eqref{e_sld_min} that 
\begin{equation*}\label{avg_ineqs}
            {y_L} > {y_+} \geq 0 > {y_-} > {y_R}.
\end{equation*}
Since $
{\pmb{y}}\in \fa$, we have
\begin{equation*}\label{two_stars}
\ell {y_L}+a{y_-}+(r-a){y_+}+v{y_R}=0.
\end{equation*}
Using the new notation, we rewrite \eqref{middle_k} as
\begin{equation}\label{new_two_stars}
    \ell {y_L} + a{y_-} > 0 \ \iff \ (r-a){y_+} + v{y_R} < 0.
\end{equation}

Set $\kappa:=-\frac{{y_R}}{{y_L}}>0$. 
In view of Observation \ref{obs1_eina} and Observation \ref{obs2_eina}, the inequalities
$
\ell<\kappa a
$ and $
a<r-\kappa v
$
cannot hold simultaneously, since they would imply
\[
\frac{\ell}{a}<\kappa<\frac{r-a}{v}
\quad\Longrightarrow\quad
\ell v<a(r-a).
\]
If $\ell\geq\kappa a$, then, since ${y_-}>{y_R}=-\kappa {y_L}$,
\[
\ell {y_L}+a{y_-}>\ell {y_L}+a{y_R}={y_L}(\ell-\kappa a)\geq0,
\]
proving \eqref{new_two_stars}.
If $a\geq r-\kappa v$, then $r-a\leq\kappa v$, and, since ${y_+}<{y_L}$,
\[
(r-a){y_+}+v{y_R}
<
(r-a){y_L}-\kappa v{y_L}
=
{y_L}(r-a-\kappa v)
\leq0,
\]
again proving \eqref{new_two_stars}. Therefore $
{\pmb{y}}\in \fa_{\mathcal{I}}^+$, proving that conditions \ref{eina_A} and \ref{eina_B} are sufficient for the inclusion $\fe_{\mathcal{I}}^+ \subseteq \fa_{\mathcal{I}}^+$ to hold.

\medskip

Now let us prove that conditions \ref{eina_A} and \ref{eina_B} are necessary. 
    Suppose $|I_1| \geq 2$. Let $y_1 = 1$ and choose $y_2, \ldots, y_d$ to be  strictly negative 
    {with} $y_2 > \ldots > y_d$ and $y_2 + \ldots + y_d = -1$. Then $
    {\pmb{y}}\in \fe_{\mathcal{I}}^+\smallsetminus  \fa_{\mathcal{I}}^+$ since $y_2 \in I_1$ and $y_2 < 0$.
 {An} analogous argument works in the case $|I_s| \geq 2$.
{This shows the necessity of condition \ref{eina_A}.}

Now suppose that condition \ref{eina_B} fails for some
$2 \leq k \leq s-1$. As before, let $\ell:=
i_{k-1}, \
r:=|I_k|$ and $v:=
d-i_k$;
then
\[
\left\lfloor\frac{r^2}{4}\right\rfloor>\ell v.
\]
By Observation \ref{obs2_eina}, there exists $a\in\{1,\ldots,r-1\}$ such that
\[
a(r-a)=\left\lfloor\frac{r^2}{4}\right\rfloor.
\]
Thus, we can choose $\kappa > 0$ such that 
\[
\frac{\ell}{a}<\kappa<\frac{r-a}{v}.
\]
Take $\varepsilon>0$ such that
\[
\varepsilon<
\min\left\{
\kappa-\frac{\ell}{a},
\,1-\frac{\kappa v}{r-a}
\right\},
\]
{and} choose strictly decreasing sequences
\[
\varepsilon_1>\cdots>\varepsilon_\ell,
\qquad
\delta_1>\cdots>\delta_v,
\]
such that
\[
\sum_{i=1}^\ell \varepsilon_i=0,\qquad
\sum_{j=1}^v\delta_j=0,\qquad
|\varepsilon_i|,|\delta_j|<\varepsilon.
\]
{Now} let 
$$
y_i:=1+\varepsilon_i \ \text{for} \ i = 1, \ldots, l; \ y_{l+r+j}:=-\kappa+\delta_j \ \text{for} \ j = 1, \ldots, v;
$$
$$
y_{\ell+1} = \ldots = y_{\ell+a} :=-\frac{\ell}{a}, \ \text{and} \ y_{\ell+a+1} = \ldots = y_{\ell+r}: =\frac{\kappa v}{r-a}.
$$
It is easy to check using \eqref{e_sld_min} that $ 
{\pmb{y}}\in \mathfrak e_{\mathcal{I}}^+$.
On the other hand,
\[
\sum_{i\in I_1\sqcup\cdots\sqcup I_{k-1}}y_i
+
\sum_{i\in I_k}\min(y_i,0)
=
\ell+a\left(-\frac{\ell}{a}\right)
=
0,
\]
so \eqref{middle_k} fails, and thus $
{\pmb{y}}\notin \fa_{\mathcal{I}}^+$.
This shows the necessity of \ref{eina_B} and completes the proof.
\end{proof}


\end{document}